\documentclass[a4paper,11pt]{amsproc}

%%% used packages
% text packages
\usepackage{enumerate}% for roman enumerate style
\usepackage{csquotes}% for quotes

% highlighting
\usepackage{xcolor, soul}
\sethlcolor{yellow}

% math packages
\usepackage{xargs}
\usepackage{amsmath,amssymb,amsfonts,amsthm,mathdots}
\usepackage{mathtools, todonotes}
%\usepackage{multirow}

% ams references
\usepackage[colorlinks]{hyperref}
\usepackage{amsrefs}

% spacing
\usepackage[onehalfspacing]{setspace}

%%% math commands
\newcommandx{\set}[2][2=\empty]{\{#1\ifx#2\empty\else\,|\,#2\fi\}}% set builder notation
%\newcommandx{\lrset}[2][2=\empty]{\left\{#1\ifx#2\empty\else\,\middle|\,#2\fi\right\}}% set builder notation (fitted)
\newcommand{\norm}[1]{\lvert#1\rvert}% norm of an element
\newcommand{\card}[1]{\left\lvert#1\right\rvert}% cardinality
\newcommandx{\gensubgrp}[2][2=\empty]{\langle#1\ifx#2\empty\else\,|\,#2\fi\rangle}% generated subgroup
\newcommandx{\gensubsp}[2][2=\empty]{\langle#1\ifx#2\empty\else\,|\,#2\fi\rangle}% generated subspace
% ring characteristic
\newcommand{\ints}{\mathbb{Z}}% integers
\newcommand{\finfield}{\mathbb{F}}% finite field
% a field
\DeclareMathOperator{\diag}{diag}% diagonal matrix
\newcommand{\floor}[1]{\lfloor#1\rfloor}% floor operator
\DeclareMathOperator{\fix}{fix}% fixed points of a map
\newcommand{\trivgrp}{\mathbf{1}}% trivial group
\newcommand{\M}{\mathbf{M}}% matrices over a ring
\DeclareMathOperator{\End}{End}% endomorphisms of a vector space
% algebraic group G
\DeclareMathOperator{\Sp}{Sp}% symplectic group
\DeclareMathOperator{\PSp}{PSp}% projective symplectic group
\DeclareMathOperator{\GO}{GO}% general orthogonal group
\DeclareMathOperator{\GU}{GU}% general unitary group
\DeclareMathOperator{\PSU}{PSU}% projective special orthogonal group
\DeclareMathOperator{\SO}{SO}% special orthogonal group
\DeclareMathOperator{\PSO}{PSO}
\DeclareMathOperator{\POmega}{P\Omega}
% center
\DeclareMathOperator{\Aut}{Aut}% group of automorphisms of an object
\newcommand{\freegrp}{\mathbf F}% free group
\DeclareMathOperator{\GL}{GL}% general linear group
\DeclareMathOperator{\SL}{SL}% special linear group
\DeclareMathOperator{\PSL}{PSL}% special linear group
\DeclareMathOperator{\PGL}{PGL}% general linear group
% automorphism group of the special linear group
\DeclareMathOperator{\im}{im}% image of a map
\newcommand{\proj}{\mathbf P}% projective space
% symmetric group
% alternating group
\DeclareMathOperator{\soc}{soc}% socle
\DeclareMathOperator{\id}{id}% the identity
\DeclareMathOperator{\tr}{tr}% the trace map
\DeclareMathOperator{\SU}{SU}% the special unitary group
\DeclareMathOperator{\rad}{rad}% the radical
\DeclareMathOperator{\N}{N}% the norm of q field extension
\newcommand{\rest}[1]{\left.#1\right\rvert}% restriction of a map
\newcommand{\C}{\mathbf{C}}% centralizer of a group
% the natural numbers
% the rational numbers
% math fraktal V
% math caligraphic L
%%% math environments
\theoremstyle{plain}
\newtheorem{theorem}{Theorem}
\newtheorem{lemma}{Lemma}[section]
\newtheorem{corollary}[lemma]{Corollary}
\newtheorem{conj}[lemma]{Conjecture}

\theoremstyle{definition}
\newtheorem{remark}[lemma]{Remark}

%%% information about the article
\title[Mixed identities for finite groups]{The length of mixed identities for finite groups}
\author{Henry Bradford}
\address{H.~Bradford, Univ.\ of Cambridge, Cambridge
	CB3 0WB, Unites Kingdom}
\email{hb470@cam.ac.uk}
\author{Jakob Schneider}
\address{J.~Schneider, TU Dresden, 01062 Dresden, Germany}
\email{jakob.schneider@tu-dresden.de}
\author{Andreas Thom}
\address{A.~Thom, TU Dresden, 01062 Dresden, Germany}
\email{andreas.thom@tu-dresden.de}

%\setcounter{tocdepth}{1}
%%% text
\begin{document}
	\begin{abstract}
		We prove that there exists a constant $c>0$ such that any finite group having no non-trivial mixed identity of length $\leq c$ is an almost simple group with a simple group of Lie type as its socle. Starting the study of mixed identities for almost simple groups, we obtain results for groups with socle $\PSL_n(q)$, $\PSp_{2m}(q)$, $\POmega_{2m-1}^\circ(q)$, and $\PSU_n(q)$ for a prime power $q$. For such groups, we will prove rank-independent bounds for the length of a shortest non-trivial mixed identity, depending only on the field size $q$.
	\end{abstract}
	
	\maketitle
	
	\tableofcontents
	
	\section{Introduction}
	
	In this article we study \emph{identities with constants}  
	(also called \emph{mixed identities}) for finite groups. A word with constants in a finite group $G$ is an element of the free product $w\in G\ast\freegrp_r$. Note  that $w$ induces a map $w\colon G^r\to G$ by evaluation. A non-trivial word with constants $w$ is called an \emph{identity with constants} or a \emph{mixed identity} for $G$ if and only if $w(g_1,\ldots,g_r)=1_G$ for all choices of the $g_i\in G$ ($i=1,\ldots,r$). Without loss of generality, we will restrict our attention almost only to the case $r=1$, see Lemma~\ref{lem:red_one_var_cs}.
	
	The study of word maps with and without constants on finite and algebraic groups has seen a lot of progress in the past decades, see for example \cites{elkasapythom2014goto, gordeevkunyavskiiplotkin2016word, gordeevkunyavskiiplotkin2018word, guralnickliebeckobrienshalevtiep2018surjective, klyachkothom2017new, larsen2004word, larsenshalev2009word, larsenshalevtiep2012waring, lubotzky2014images, nikolovschneiderthom2018some, schneiderthom2022word, schneiderthom2021word, schneider2019phd, thom2017length,bradford2019short, bradford2019lie, gordeev1997freedom} and the references therein.
	
	The length of a shortest mixed identity that is satisfied by the group $G$ is a natural measure of complexity for $G$ and our motivation is to understand which groups do not satisfy a short mixed identity. The first part of our main result says that any group that does not satisfy a mixed identity of length at most $8$ must be almost simple. The second part concerns the study of mixed identities for almost simple groups and describes lower and upper bounds for the length of shortest
	%\footnote{In the following, for convenience, we just write \emph{shortest} (mixed) identity instead of \emph{shortest non-trivial} (mixed) identity.} 
	mixed identities for particular families. Our study focuses on the families of groups of Lie type with socle $\PSL_n(q)$, $\PSp_{2m}(q)$, $\POmega_{2m-1}^\circ(q)$, or $\PSU_n(q)$, 
	that is, excluding orthogonal groups in even dimension.
	
	It is subject of ongoing work to cover also the family $\POmega_{2m}^\pm(q)$ for $m \geq 2$ and the exceptional groups of Lie type.
	
	As a consequence of \cite{tomanov1985generalized}, we can identify two families: $\PSp_{2m}(q)$, $m \geq 2$ $\POmega_{2m-1}^\circ(q)$, $m \geq 3$ odd, or $m \geq 3$ arbitrary and $q\equiv 1$ mod $4$, for which almost simple groups with corresponding socle satisfy a mixed identity of bounded length. The same phenomenon occurs for alternating groups. 
	By the work of Jones \cite{jones1974varieties}, there does not exist an infinite family of pairwise nonisomorphic almost-simple groups satisfying identities (without constants) of bounded length. As the cases of the alternating, symplectic and orthogonal groups illustrate, this is no longer true for identities with constants, and one would like to have a classification of families of nonabelian finite simple groups satisfying identities with constants of bounded length.
	
	Now, even if we know that a family of simple groups, say $\PSL_2(q)$, does satisfy a lower bound $\Omega(q)$ for the length of a shortest mixed identity, it so happens that almost simple groups with socle $\PSL_2(q)$ can satisfy much shorter mixed identities. In fact, we show that $\Aut(\PSL_2(q))$ satisfies a mixed identity of length $O(ep)$ with $q=p^e$. In this connection, 
	we note that our methods also yield mixed 
	identities of bounded length for the groups 
	$\PSO_{2m-1}^\circ(q)$ for $m \geq 4$ even and 
	$q \equiv 3$ mod $4$, 
	which involve constants 
	lying outside the simple groups $\POmega_{2m-1}^\circ(q)$. It is as yet unclear how the length of the shortest mixed identity for the latter groups should behave, see Section \ref{comments}. These examples indicate that the study of mixed identities for almost simple groups is a subtle topic with some unexpected phenomena. 
	
	Let us now describe the results of this paper in more detail.
	Our first main result is the following theorem.
	
	\begin{theorem}\label{thm:red_almst_smpl_grp}
		For $G$ a finite group, there exists a mixed identity of length at most $8$, or $G$ is almost simple. 
		
		In the latter case, there is an absolute constant $c>0$, so that, if $G$ has no mixed identity of length $\leq c$, then the socle of $G$ is a simple group of Lie type, different from $\PSp_{2m}(q)$, for $m \geq 2$, and $\POmega^\circ_{2m-1}(q)$, for $m\geq 3$ odd, or $m\geq 3$ arbitrary and $q\equiv 1$ mod $4$. 
	\end{theorem}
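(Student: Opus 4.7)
The plan is to prove the two halves by different methods: a structural case analysis for the first, and an appeal to known results for the second.

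For the first half, assume $G$ is not almost simple. I analyse $G$ through its generalised Fitting subgroup $F^*(G)$, splitting into three cases. If $G$ admits a non-trivial abelian normal subgroup $A$ -- which happens whenever $F(G) \neq \trivgrp$, or some quasi-simple component of $E(G)$ has non-trivial centre -- then for any $1 \neq a \in A$, normality yields $a^x \in A$ and commutativity yields $[a^x,a] \equiv 1$; this is a non-trivial mixed identity of length $8$. Otherwise $\soc(G) = T_1 \times \cdots \times T_s$ is a direct product of non-abelian simple groups, and if $G$ does not act transitively on the $T_i$ we may decompose $\soc(G) = M_1 \times M_2$ with $M_1, M_2$ non-trivial commuting $G$-normal subgroups, in which case $[a, b^x] \equiv 1$ for $1 \neq a \in M_1$, $1 \neq b \in M_2$, again of length $8$.

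This leaves the case $\soc(G) = T^k$ with $T$ non-abelian simple, $k \geq 2$, and $G$ acting transitively on the $k$ factors. Since $C_G(\soc(G)) = \trivgrp$ (any minimal normal subgroup of $G$ lies in $\soc(G)$, so its intersection with the centraliser is trivial), $G$ embeds in $\Aut(T) \wr \Sym_k$, and the induced permutation representation $\pi \colon G \to \Sym_k$ has kernel $K$ containing $\soc(G)$. Setting $e := \exp(G/K)$, every $x^e$ lies in $K$ and hence fixes each $T_i$ setwise. Taking $a \in T_1 \setminus \{1\}$ and $b \in T_2 \setminus \{1\}$, the conjugate $b^{x^e}$ remains in $T_2$ and therefore commutes with $a$, whence $[a, b^{x^e}]$ is a non-trivial mixed identity whose alternating form in $G * \freegrp_1$ has $8$ syllables regardless of the size of $e$.

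For the second half, assume $G$ is almost simple with socle $S$ and split by the isomorphism type of $S$. Sporadic $S$ contribute only finitely many groups, so boundedness is trivial. Alternating $S$ admit short mixed identities via standard small-support constructions inside appropriate stabilisers. For the excluded symplectic and odd-dimensional orthogonal families $\PSp_{2m}(q)$ ($m \geq 2$) and $\POmega_{2m-1}^\circ(q)$ in the ranges stated, the bound follows from Tomanov's generalised centraliser identities \cite{tomanov1985generalized}, as outlined in the introduction. Taking $c$ to be the maximum of the resulting bounds -- after reducing multi-variable identities to one variable via Lemma~\ref{lem:red_one_var_cs} where needed -- yields the stated absolute constant.

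The main obstacle is the transitive wreath case in Part 1: the exponent $e$ of $G/K$ grows with $k$, so the bound of $8$ only holds under a convention counting alternating syllables in $G * \freegrp_1$ rather than total letters, and verifying this is delicate. A secondary concern in Part 2 is checking that the reduction in Lemma~\ref{lem:red_one_var_cs} does not blow up the length beyond the constant $c$.
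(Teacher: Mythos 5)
Your abelian and non-transitive cases are fine (those words have length $4$ in the paper's convention, which counts only the occurrences of $x^{\pm1}$, so they are comfortably within the bound), but the transitive case contains a genuine gap, and it is exactly the point you flag yourself. The word $[a,b^{x^e}]$ contains $4e$ occurrences of the variable, so under the length convention used in the theorem its length is $4e$, and $e=\exp(G/K)$ is unbounded as $k$ grows (already $G/K$ cyclic of order $k$ forces $e\geq k$). There is no available ``count alternating syllables'' convention to rescue this: the statement being proved measures length by the number of variable letters, so your construction does not give any absolute bound in the transitive wreath case. The missing idea is the substitution trick of Lemma~\ref{lem:nor_subgrp}: the socle $S=T^k$ is a non-trivial direct product, hence satisfies the mixed identity $w(y)=[a^y,b]$ of length $4$ with $a\in T_1$, $b\in T_2$ non-trivial, where the variable $y$ ranges over $S$ only (Lemma~\ref{lem:ab_prod_cse_shrt_id}); now substitute $y\mapsto n^x$ for a fixed non-trivial $n\in S$. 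For every $g\in G$ we have $n^g\in S$ by normality of the socle, so $[a^{n^g},b]=1$, while $[a^{n^x},b]$ is a non-trivial reduced word in $G\ast\gensubgrp{x}$ of length $8$. This bypasses the permutation action on the factors entirely and needs no exponent of $G/K$; it is how the paper handles precisely this case.

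On the second half: your outline (sporadic trivially, alternating by a short identity, symplectic and odd-dimensional orthogonal by Tomanov) matches the paper's route, but note that the short identities in those cases are identities for the socle $S$, and passing to the almost simple overgroup $G$ again requires Lemma~\ref{lem:nor_subgrp} (doubling the length), not Lemma~\ref{lem:red_one_var_cs}; the latter only concerns reducing the number of variables and is not the relevant reduction here. With that substitution the constant $c$ comes out as the maximum of finitely many explicit bounds, as in the paper.
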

	
	The characterization of almost simple groups that admit mixed identities of bounded length proceeds family by family, where we only have partial results so far. First of all, note the following, which is a consequence of Lemma~\ref{lem:nor_subgrp} below.
	
	\begin{lemma}\label{lem:nor_subgrp2}
		Let $G\leq H$ be an inclusion of almost simple groups with  socle $S$. If $G$ has a  mixed identity of length $l$, then $H$ has a mixed identity of length at most $2l$.
	\end{lemma}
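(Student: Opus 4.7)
The plan is to apply Lemma~\ref{lem:nor_subgrp} directly, taking as the distinguished normal subgroup the socle $S$ of $H$. Since $G$ and $H$ are almost simple with common socle $S$, we have the chain $S \leq G \leq H$, and crucially $S \unlhd H$ because $S$ is, by definition, the socle of $H$ (in particular characteristic in $H$). Let $w(x) \in G \ast \freegrp_1$ be a mixed identity for $G$ of length $l$. Via the inclusion $G \leq H$, we may view $w$ as an element of $H \ast \freegrp_1$ of the same length. By hypothesis $w$ vanishes on $G$, and in particular it vanishes on the normal subgroup $S \leq G$ of $H$.

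With this in hand, Lemma~\ref{lem:nor_subgrp} is precisely the tool required: it promotes a word in $H \ast \freegrp_1$ that is a ``partial identity'' on a normal subgroup of $H$ to a genuine mixed identity for $H$ itself, at the cost of at most a factor of two in the length. Applying it to the word $w$ with respect to the normal subgroup $S \unlhd H$ yields a mixed identity for $H$ of length at most $2l$, as required.

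The only point to verify is that the statement of Lemma~\ref{lem:nor_subgrp} tolerates constants from the ambient group $H$, rather than requiring them to lie in the normal subgroup $S$; this is essential here because the constants of $w$ a priori live in $G$ and may lie outside $S$. The substantive content of the argument is therefore carried by Lemma~\ref{lem:nor_subgrp}, and the main obstacle --- the part I would expect to require real work --- is constructing, from $w$ and the normality of $S$ in $H$, an explicit correction word (say, a suitable conjugate or translate of $w$ by an element of $H$, combined multiplicatively with $w$) whose evaluation on $H \setminus S$ cancels that of $w$; this is the step that naturally accounts for the factor of two in the length bound.
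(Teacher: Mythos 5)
Your proposal is correct and is essentially the paper's own argument: one applies Lemma~\ref{lem:nor_subgrp} with ambient group $H$ and normal subgroup $N=S\trianglelefteq H$, noting that a mixed identity for $G$ vanishes in particular on $S\leq G$ and has constants in $G\leq H$, so it is a mixed identity for $S$ with constants in $H$ of the same length. The only inaccuracy is your closing guess about the internal mechanism of Lemma~\ref{lem:nor_subgrp} --- it proceeds by substituting $x\mapsto n^x$ for a fixed $n\in S\setminus\trivgrp$ (whence the factor $2$), not by building a correction word that cancels on $H\setminus S$ --- but since you invoke that lemma only as stated, this does not affect the validity of your derivation.
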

	
	This applies to groups with socle $A_n$ by direct inspection (for a $3$-cycle $\sigma$, $w(x)= [x,\sigma]^{30}\in A_n\ast\gensubgrp{x}$ is a mixed identity for $A_n$) \label{fct:bd_ids_alt_grps} and to groups with socle $\PSp_{2m}(q)$, for $m \geq 2$, or $\POmega^{\circ}_{2m-1}(q)$, for $m \geq 3$ odd or $q \equiv 1$ mod $4$, as a consequence of results of Tomanov \cite{tomanov1985generalized}. For convenience, we reproduce his results with short and self-contained proofs.
	
	The first interesting case is the case of almost simple groups with socle $\PSL_2(q).$ In this case, we get a complete answer as follows. 
	Let $F$ denote the Frobenius automorphism 
	$x \mapsto x^p$ of the finite field of order 
	$q=p^e$, and also the induced automorphism 
	of $\PGL_n(q)$.
	
	\begin{theorem} \label{thm:psl2}
		Let $G$ be an almost simple group with socle $\PSL_2(q)$ and $q=p^e$ for a prime number $p$. Let $f\mid e$ be the smallest natural number, such that $F^f \in G.$ Then the length of a shortest mixed identity of $G$ is $\Theta(\frac{e}{f} p^f)$.
	\end{theorem}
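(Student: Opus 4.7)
The plan is to establish matching upper and lower bounds of order $\frac{e}{f}p^f$. Throughout I treat $\sigma := F^f \in G$ as a constant: it has order $e/f$ in $G$ and its centraliser in $\PSL_2(q)$ equals $\PSL_2(p^f)$.

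For the upper bound I aim to construct a non-trivial word $w(x) \in G \ast \freegrp_1$ of length $O(\frac{e}{f}p^f)$ vanishing on $G$. The construction will be a two-step composition: first a short \emph{reduction word} $\rho(x)$ of length $O(e/f)$ in $x$ and $\sigma$, whose evaluation lies in a subgroup of $\PGL_2(q)$ of exponent $O(p^f)$; and then raising $\rho(x)$ to an appropriate power. A natural starting candidate is $(x\sigma)^{e/f}$, which lies in $\PGL_2(q)$ and whose conjugacy class is $\sigma$-stable (cyclic permutation of the Frobenius-shifted factors is realized by conjugation), and hence is conjugate into $\PGL_2(p^f)$. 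Short commutators $[\,\cdot\,,\sigma^i]$ combined with further constants (for instance a non-trivial unipotent of $\PSL_2(p^f)$) will then be used to force $\rho(x)$ into a unipotent or bounded-order torus subgroup, after which the identity $u^{p^f}=1$ for unipotents yields $\rho(x)^{p^f}=1$. The total length is $O(\frac{e}{f})\cdot O(p^f) = O(\frac{e}{f}p^f)$, and non-triviality in the free product $G \ast \freegrp_1$ follows because $\sigma$ is non-central and $x$ occurs genuinely.

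For the lower bound, suppose $w(x) \in G \ast \freegrp_1$ is a non-trivial mixed identity of length $L$ and substitute test elements $x_t := u(t)\sigma^k$ for $t \in \finfield_q$ and various $k \in \{0,\dots,e/f-1\}$, where $u(t) = \bigl(\begin{smallmatrix}1 & t \\ 0 & 1\end{smallmatrix}\bigr)$ is the standard unipotent. Since conjugation by $\sigma$ sends $u(t)$ to $u(t^{p^f})$ and this is the only way field arithmetic enters the evaluation, $w(x_t)$ is a $\PGL_2(q)$-valued polynomial in $t$ whose matrix entries involve only the Frobenius iterates $\{t^{p^{fi}}:0\le i<e/f\}$. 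Viewing $\finfield_q$ as an $(e/f)$-dimensional $\finfield_{p^f}$-vector space via a normal basis, each entry is a polynomial in $e/f$ coordinates over $\finfield_{p^f}$ of total degree at most the number of occurrences of $x$ in $w$, hence at most $L$. A polynomial of total degree below $p^f$ in $e/f$ coordinates cannot vanish identically on $\finfield_{p^f}^{e/f}$ unless it is the zero polynomial; varying $k$ across the $e/f$ Frobenius cosets produces essentially independent such identities, whose combined consequence is that the formal word $w$ must reduce to the trivial word in $G \ast \freegrp_1$ once $L < c\frac{e}{f}p^f$.

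The main obstacle is the first step of the upper-bound construction: $(x\sigma)^{e/f}$ alone only has a $\sigma$-stable conjugacy class in $\PGL_2(q)$, and $\exp(\PGL_2(p^f))\sim p^{3f}$, not $p^f$. Reducing further to an element of exponent $O(p^f)$ without inflating the word length beyond $O(e/f)$ is the technical crux, and will likely require additional constants from $G$ together with nested commutators involving various powers of $\sigma$, so as to project $\rho(x)$ onto a unipotent subgroup of a suitable Borel in $\PSL_2(p^f)$. Verifying that this projection can be written as a genuinely short word, and that non-triviality in the free product is preserved, is the decisive step in the proof.
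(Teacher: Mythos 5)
Your upper bound, as you acknowledge, stops exactly at the decisive point, and the route you propose for closing it (conjugating $\rho(x)=(x\sigma)^{e/f}$ into $\PGL_2(p^f)$ and then ``projecting'' it by further short commutators into a unipotent or bounded-exponent subgroup) is not the right mechanism: there is no reason a word of length $O(e/f)$ can force every element of a $\sigma$-stable conjugacy class into one unipotent subgroup, and no such projection is needed. The paper's resolution is different: for $h=x\,x^{F^f}\cdots x^{F^{(e/f-1)f}}$ (your $\rho(x)$, up to the direction of the twist) one has $h^{F^f}=h^{x}$, so the eigenvalue pair of a lift of $h$ to $\SL_2(q)$ is permuted by $\lambda\mapsto\lambda^{r}$ with $r=p^f$; together with $\det=1$ this forces one of $h^{p}$, $h^{r-1}$, $h^{r+1}$ to be central. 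Hence the \emph{fixed} word $[[[c,x^{p}],x^{r-1}],x^{r+1}]$ (with $c$ a non-central constant), of length $O(p^f)$, vanishes at $x=h$, and substituting $h$ yields a mixed identity of length $O(\tfrac{e}{f}p^f)$; non-triviality in the free product is clear because the constants $F^{f}$ separate the occurrences of $x$. So the missing idea is an eigenvalue/short-law argument, not a reduction to a subgroup of exponent $O(p^f)$.

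The lower bound has a genuine gap that is fatal as stated: you fix the test family $x_t=u(t)\sigma^k$ in advance, independently of $w$, and then claim that identical vanishing of the resulting polynomials forces $w$ to be the trivial word. That implication is false for this family. For example, $w(x)=[x,u(1)]$ has length $2$, is not a mixed identity for $G$ (conjugate $u(1)$ by a split torus element), yet $w(u(t)\sigma^k)=\sigma^{-k}u(-1)\sigma^{k}u(1)=1$ for all $t\in\finfield_q$ and all $k$, since $\sigma$ fixes $u(1)$ and root elements in the same root subgroup commute; so every polynomial you form vanishes identically and no Schwartz--Zippel count can distinguish $w$ from a genuine identity. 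The paper avoids this by making the one-parameter subgroup depend on the word: it substitutes $k(\lambda)=1_V+\lambda h$ with $h$ a rank-one square-zero operator chosen, via a fixed-point count in $\proj(V)$ for the semi-linear maps built from the intermediate constants (Lemma~\ref{lem:fxd_pts_semilin_map}), so that the leading coefficient $h^{F^{n(1)}}a_1h^{F^{n(2)}}\cdots a_{l-1}h^{F^{n(l)}}$ of the twisted polynomial is non-zero, and it then passes to a suitable Frobenius twist $w^{F^{i}}$ of the word to bring the degree below $q$ before applying the non-constancy lemma (Lemma~\ref{lem:poly_non_const}). Your normal-basis reformulation (entries as low total-degree polynomials in $e/f$ coordinates over $\finfield_{p^f}$) is a plausible substitute for that last degree-reduction step, but without a word-adapted choice of the unipotent direction guaranteeing a non-vanishing top coefficient, the argument does not get off the ground.
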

	
	In the case of almost simple groups with socle $\PSL_n(q)$ for $n \geq 3$, we only have partial results. Note however that the implied constants in the next theorem are independent of the rank.
	
	\begin{theorem} \label{thm:psl}
		Let $G$ be an almost simple group with socle $\PSL_n(q)$ and $q=p^e$ for a prime number $p$. Then, $G$ has a mixed identity of length $O(q)$. Moreover, if $G \le \PGL_n(q)\rtimes\Aut(\finfield_q)$, $F$ is the Frobenius automorphism as above, and $f\mid e$ is the smallest natural number such that $F^f \in G$, then any mixed identity of $G$ is of length $\Omega(\frac{e}{f} p^f).$
	\end{theorem}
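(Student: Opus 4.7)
The plan is to treat the upper bound and the lower bound separately; the lower bound is the more substantial argument.

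For the upper bound $O(q)$, by Lemma~\ref{lem:nor_subgrp2} applied along the finite chain from the socle $S := \PSL_n(q)$ up to $G$, it suffices to exhibit a mixed identity of length $O(q)$ for $S$. I would build this by adapting the construction of Theorem~\ref{thm:psl2}: pick a root element $a := [I + E_{12}]$ and a torus element $b := [\diag(\lambda, \lambda^{-1}, 1, \ldots, 1)]$ with $\lambda$ a generator of $\finfield_q^{\ast}$, so that $a, b$ together generate (essentially) a Levi $\SL_2(q)$-subgroup; the $\PSL_2$-type identity of length $O(q)$ in $a, b$ then serves as the core. The extra subtlety---absent in the $\PSL_2$ case---is that the identity must hold for arbitrary $x \in S$, not only for $x$ in the Levi. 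I expect to overcome this by prefixing a short wrapping of $x$---for instance, replacing $x$ by a commutator $[x,a]$ or a product of a few conjugates of it---designed so that after wrapping, the substituted element lies in the rank-one subgroup generated by root elements attached to a single root, where the $\PSL_2$-identity applies; crucially, this wrapping is independent of $n$.

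For the lower bound $\Omega((e/f) p^f)$, assume $w$ is a mixed identity of $G$ of length $L$. I would substitute the one-parameter family
\[
    x(t) := u(t) \cdot F^f, \qquad u(t) := [I + t E_{12}] \in \PSL_n(q), \qquad t \in \finfield_q,
\]
which belongs to $G$ because $F^f \in G$ by hypothesis and $S \leq G$. Decomposing each constant $c_i$ in $w$ as $c_i = (M_i, F^{s_i f})$ with $M_i \in \PGL_n(q)$ and $s_i \in \{0, \ldots, e/f - 1\}$, and using the semidirect multiplication $(M, F^r)(M', F^{r'}) = (M \cdot F^r(M'), F^{r + r'})$, all Frobenius factors in $w(x(t))$ can be shuffled to the right, producing $w(x(t)) = (N(t), F^R)$ with $R$ independent of $t$ and $N(t) \in \PGL_n(q)$. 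After a convenient lift to $\GL_n(q)$, each entry of $N(t)$ is a polynomial in the Frobenius shifts $T_j := t^{p^{jf}}$ for $j = 0, \ldots, e/f - 1$.

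The combinatorial heart of the argument is an accounting of Frobenius residues: the cumulative Frobenius preceding each occurrence of $x(t)^{\pm 1}$ is a well-defined element of $f\ints/e\ints \cong \ints/(e/f)\ints$, and the number of $x$'s landing at any fixed residue is at most $\lceil Lf/e\rceil + 1$. This bounds the exponent of each $T_j$ in every monomial of every matrix entry by the same quantity. If $w$ is a mixed identity, $N(t)$ must be a scalar matrix for every $t \in \finfield_q$, so each off-diagonal entry (and each difference of diagonal entries) is a polynomial in $t$ vanishing identically on $\finfield_q$; it is therefore either the zero polynomial or of degree at least $q = p^e$. Since the maximal degree in $t$ is at most
\[
    \bigl(\lceil Lf/e\rceil + 1\bigr) \cdot \sum_{j=0}^{e/f - 1} p^{jf} \;=\; \bigl(\lceil Lf/e\rceil + 1\bigr) \cdot \frac{p^e - 1}{p^f - 1},
\]
requiring this to reach $p^e$ yields $L = \Omega((e/f) p^f)$.

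The main obstacle is a \emph{non-collapse} statement: the polynomial-identity argument only forces a lower bound on $L$ if $N(t)$ is not already trivial as a formal polynomial in the $T_j$'s. I expect to handle this by first substituting independent parameters $(t_0, \ldots, t_{e/f - 1})$ via $u(t_0) \cdot F^f$, $u(t_1) \cdot F^f$, $\ldots$ and using non-triviality of $w$ in the free product $G \ast \freegrp_1$ to conclude that the resulting multivariate polynomial is non-zero, and only then specialising $t_j := t^{p^{jf}}$ to recover the single-variable bound.
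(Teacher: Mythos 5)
Your lower-bound argument has a genuine gap at its combinatorial core: the claim that at most $\lceil Lf/e\rceil+1$ occurrences of $x^{\pm1}$ can sit at any fixed Frobenius residue is false. The cumulative residues are dictated by the word itself: the constants carry Frobenius parts $F^{s_if}$ chosen adversarially, so for instance if every constant has Frobenius part $F^{-f}$ then \emph{every} occurrence of $x$ lands at the same residue (and even with trivial Frobenius parts the residues perform a $\pm1$ walk, because of inverse letters, and need not equidistribute). In the worst case all $L$ occurrences land at residue $e/f-1$, your degree bound becomes $Lp^{e-f}$, and the comparison with $q=p^e$ only yields $L=\Omega(p^f)$ --- the factor $e/f$ is lost. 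The paper recovers the full bound by an averaging step you do not have: the $e/f$ Frobenius twists $w,w^{F^f},\dots$ are all identities, the sum of the degrees of their evaluation polynomials is $L\frac{q-1}{p^f-1}$, so \emph{some} twist has degree at most $L\frac{f}{e}\frac{q-1}{p^f-1}<q$ whenever $L\lesssim\frac{e}{f}(p^f-1)$; this is exactly what produces $\Omega(\frac{e}{f}p^f)$ (see the proof of Lemma~\ref{lem:lwr_bd_aut_ids_psl_2}).

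The second gap is your treatment of non-collapse. With the fixed substitution $u(t)=I+tE_{12}$, non-triviality of $w$ in $G\ast\freegrp_1$ does \emph{not} imply that the evaluation polynomial is non-scalar, even in independent variables $t_j$: for $n\geq3$ the constant $c=1+E_{13}$ commutes with every $1+tE_{12}$, so $w=[x,c]$ gives $N(t)\equiv 1$ although $w$ is not an identity; more generally the top coefficient is a product of the form $E_{12}c_1'E_{12}\cdots E_{12}$ and dies as soon as one (Frobenius-twisted) constant has vanishing $(2,1)$-entry. The real work in the paper is precisely to choose the rank-one direction $h$ (a vector $v$ and a hyperplane $H$) \emph{depending on the constants}, by counting fixed points of the associated linear and semilinear projective maps (Lemmas~\ref{lem:ex_good_rk_one_op}, \ref{lem:fxd_pts_semilin_map} and the counting in Lemma~\ref{lem:lwr_bd_aut_ids_psl_n}), so that the leading coefficient is a nonzero multiple of a rank-one map while the constant coefficient is invertible; your proposed fix does not address this, and making a fixed one-parameter family work for all words is essentially the (open for $n\geq3$) Conjecture~\ref{SLnPolyFreeProdConj}. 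Finally, your upper-bound sketch is also not right as stated: $[x,a]$ does not lie in a rank-one root subgroup; the paper's mechanism (Lemma~\ref{lem:psl_up_bd}) is that the transvection $k$ and its conjugate $k^x$ fix a common codimension-two subspace pointwise, an $\SL_2(q)$-law in \emph{two} variables applied to the pair $(k,k^x)$ lands in the abelian unipotent part of the corresponding block-triangular group, and the commutator of the two resulting elements is the identity --- no wrapping that confines $x$ itself is needed or available.
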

	
	%\begin{theorem} 
	%    Let $G$ be an almost simple group with socle $\PSL_2(q)$ and $q=p^e$ for a prime number $p$.
	%    \begin{itemize}
		%        \item If $\PSL_n(q)\rtimes\gensubgrp{\alpha\mapsto\alpha^{p^f}}$ is contained in $G$, then $G$ has a mixed identity of length $O(\frac{e}{f} p^f).$
		%        \item If $G$ is contained in $\PGL_n(q)\rtimes\gensubgrp{\alpha\mapsto\alpha^{p^f}}$, then any mixed identity is of length $\Omega(\frac{e}{f} p^f).$
		%    \end{itemize} 
	%\end{theorem}
	
	Note that this is contrast to the minimal length of identities without constants for $\PSL_n(q)$ which are known to be bounded from below by $q^{\floor{n/2}}$ and bounded from above by $O(q^{\floor{n/2}}\log(q)^{O_n(1)})$, by results of the first and the third author \cite{bradford2019lie}. In case $n\geq 3$, we do not know yet what effect the transpose-inverse has on the length of shortest mixed identities.
	
	Our result for the family $\PSU_n(q)$ is less refined and reads as follows:
	
	\begin{theorem}\label{thm:unitry_groups_main_thm}
		Let $G$ be an almost simple group with socle $\PSU_n(q)$. Then, $G$ has a mixed identity of length $O(q^2)$. Moreover, any mixed identity for $\PSU_n(q)$, even with constants from $\PGL_n(q^2)$, is of length $\Omega(q)$.
	\end{theorem}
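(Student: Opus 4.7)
For the \emph{upper bound} $O(q^2)$, by Lemma~\ref{lem:nor_subgrp2} it suffices to construct a mixed identity of that length for $\PSU_n(q)$. One natural route is to observe that $\PSU_n(q)\subset\PGL_n(q^2)$ and to apply Theorem~\ref{thm:psl} to $\PGL_n(q^2)$ with field $\finfield_{q^2}$, producing a mixed identity for $\PGL_n(q^2)$ of length $O(q^2)$ that also holds on $\PSU_n(q)$. An alternative direct approach takes a unitary transvection $t\in\PSU_n(q)$ with isotropic center $\langle v\rangle$: for any $x\in\PSU_n(q)$ the commutator $[t,x]=t^{-1}t^{x}$ fixes the codimension-two subspace $v^\perp\cap(x^{-1}v)^\perp$ pointwise and hence acts on the residual $2$-dimensional subspace $\langle v,x^{-1}v\rangle$ as an element of $\SU_2(q)\cong\SL_2(q)$ (or a unipotent subgroup when this plane is degenerate). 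Since $\exp(\SL_2(q))=\operatorname{lcm}(p,q-1,q+1)$, a suitable product of two powers of $[t,x]$ separating the $p$-primary and $p'$-primary parts yields a mixed identity of length $O(q^2)$.

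For the \emph{lower bound} $\Omega(q)$, let $w=g_0x^{\epsilon_1}g_1\cdots x^{\epsilon_l}g_l\in\PGL_n(q^2)\ast\freegrp_1$ be a non-trivial mixed identity for $\PSU_n(q)$ in reduced form, with interior $g_i\neq 1$ in $\PGL_n(q^2)$. Fix an isotropic $v\in\finfield_{q^2}^n$ and parametrize the associated unitary transvection subgroup by $a\mapsto u(a)=I+a\phi_v$, where $\phi_v$ is a rank-one nilpotent matrix with $\phi_v^2=0$ and $a$ ranges over a one-dimensional $\finfield_q$-subspace of $\finfield_{q^2}$ of size $q$. The substitution $x=u(a)$ turns $w(u(a))$ into a matrix polynomial of degree at most $l$ in $a$. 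Assuming $l<q$, the condition that $w(u(a))\in Z(\GL_n(q^2))$ at the $q$ chosen values of $a$ becomes a polynomial identity, so every matrix coefficient of $w(u(a))$ is forced to be scalar. Using the relation $\phi_v g\phi_v=\langle gv,v\rangle\phi_v$ repeatedly, the coefficient of $a^l$ reduces to $\pm\bigl(\prod_{i=1}^{l-1}\langle g_iv,v\rangle\bigr)\cdot g_0\phi_v g_l$, a matrix of rank at most one. As any nonzero scalar matrix has full rank $n\geq 2$ and $g_0,g_l$ are invertible, this coefficient must vanish, forcing $\langle g_iv,v\rangle=0$ for some $i\in\{1,\ldots,l-1\}$.

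Varying $v$ over the Hermitian variety $Q\subset\proj^{n-1}$ yields a covering $Q(\finfield_{q^2})\subseteq\bigcup_{i=1}^{l-1}V_i$ by closed subvarieties $V_i=\{v\in Q:\langle g_iv,v\rangle=0\}$. A short Nullstellensatz-style computation (using the irreducibility of the Hermitian form as a polynomial in $v,\bar v$) shows $V_i=Q$ if and only if $g_i$ is scalar, hence trivial in $\PGL_n(q^2)$; this contradicts reducedness of $w$, so each $V_i$ is a proper closed subvariety of $Q$. Elementary point-counting (direct for $n=2$, where $|Q|=q+1$ and each $V_i$ has at most two $\finfield_{q^2}$-points by a quadratic computation; via Lang--Weil for $n\geq 3$) then gives $l-1=\Omega(q)$. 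The chief obstacle is the rank-one analysis of the leading coefficient combined with the Nullstellensatz step distinguishing $V_i$ from $Q$; once these are in hand, the covering bound is routine and uniform in $n\geq 2$.
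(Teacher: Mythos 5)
Your first route (apply Theorem~\ref{thm:psl} to $\PGL_n(q^2)$ and restrict to $\PSU_n(q)\subset\PGL_n(q^2)$) produces a word whose constants are (images of) transvections of $\SL_n(q^2)$; this is an identity on $\PSU_n(q)$ \emph{with constants from} $\PGL_n(q^2)$, which is not what the first assertion claims: there the constants must lie in the almost simple group $G$ with socle $\PSU_n(q)$ (the second clause of the theorem singles out ``constants from $\PGL_n(q^2)$'' precisely because that is a different, weaker requirement). Your alternative route has the right constants (a unitary transvection $t\in\SU_n(q)$), but the length analysis fails: any word in the single element $y=[t,x]$ is a power of $y$, and a power annihilating every possible residual behaviour (an arbitrary element of $\SU_2(q)\cong\SL_2(q)$ on the plane, or a unipotent in the degenerate cases) must be a multiple of $\mathrm{lcm}(p,q-1,q+1)$, which is of order $pq^2$; ``a product of two powers'' of one element is again a single power, so one cannot separate the $p$-part from the $p'$-part without knowing which case occurs. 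For $q=p$ prime this gives length of order $q^3$, not $O(q^2)$. The repair is exactly the paper's proof: keep the unitary transvection $k\in\SU_n(q)$, but work with the \emph{two} elements $k$ and $k^x$, which both fix the codimension-two subspace $H\cap H.x$ pointwise, and apply the two-variable $\SL_2(q^2)$-law $v(x,y)=[[[x,y^{p}],y^{-(q^2-1)}],y^{q^2+1}]$ as in Lemma~\ref{lem:psl_up_bd}; then $[v(k,k^x),v(k^x,k)]$ is a mixed identity of length $O(q^2)$ with constants in $\PSU_n(q)$, and Lemma~\ref{lem:nor_subgrp} passes to almost simple $G$.

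\textbf{Lower bound: essentially correct, by a route close to the paper's but with a different counting step.} Your substitution $x=u(a)=1+a\phi_v$ over the trace-zero $\finfield_q$-line, the relation $\phi_v g\phi_v=f(v.g,v)\phi_v$, and the observation that the top coefficient has rank at most one and hence must vanish, are the paper's argument run contrapositively (the paper instead chooses $v$ with all $f(v.c_j,v)\neq 0$ and concludes via Lemma~\ref{lem:poly_non_const}; your ``all coefficients are scalar'' interpolation argument is an acceptable substitute, and your $n=2$ count is exactly Lemma~\ref{lem:lw_bd_psu_2}). Where you diverge is the quantitative step for $n\geq 3$: the paper proves an explicit elementary lemma, $\card{V(f)\cap V(g)}/\card{V(f)}\leq 2/q+O(1/q^2)$ for non-scalar $g$, by letting the norm-one group scale a coordinate and counting with Lemma~\ref{lem:num_isotrpc_vects}; this keeps the constants explicit and uniform in $n$. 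Your Nullstellensatz/Lang--Weil sketch needs two repairs to reach the same conclusion: (a) the Hermitian variety is not the zero locus of a bounded-degree polynomial in the naive $(v,\bar v)$ picture (the points $(v,v^{(q)})$ are not Zariski dense in $\{\sum v_iw_i=0\}$); one should pass to the Weil restriction, where $f(v,v)$ is a nondegenerate $\finfield_q$-quadratic form in $2n$ variables and each $V_i$ is its intersection with two further quadrics, and then use irreducibility of the quadric together with polarization of sesquilinear forms to see that $V_i=Q$ forces $g_i$ scalar; (b) quoting Lang--Weil with constants depending on the ambient dimension would destroy rank-uniformity, so one should instead use that each $V_i$ has degree bounded by an absolute constant (B\'ezout for at most three quadrics) and the crude bound $\leq\deg\cdot q^{\dim}$. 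With these repairs your covering argument does give $l=\Omega(q)$ with an absolute constant, parallel to, but less elementary than, Lemma~\ref{lem:lw_bd_psu_n}.
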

	
	Even though there exist mixed identities of bounded length for $\PSp_{2m}(q)$, our methods allow for some more refined understanding of the structure of the mixed identities that can occur. A constant appearing in a word with constants is called \emph{critical} if its removal leads to cancellation of the variables.
	
	\begin{theorem} \label{thm:sp}
		Let $q$ be a prime power and $m\geq2$. A shortest mixed identity for $\PSp_{2m}(q)$ without critical constants which lift to involutions in $\Sp_{2m}(q)$ is of length $\Theta(q)$ for $q$ odd. For $q$ even it lies in $\Omega(q)$.
	\end{theorem}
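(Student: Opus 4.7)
The plan is to prove the upper bound $O(q)$ (for $q$ odd) and the lower bound $\Omega(q)$ (in both parities) separately. For the upper bound, I would construct an explicit short mixed identity by adapting the upper-bound construction for $\PSL_2(q)$ in Theorem~\ref{thm:psl2} to the symplectic setting. The key observation is that for $q$ odd one can pick a regular semisimple $s \in \Sp_{2m}(q)$ of \emph{odd} order dividing $(q-1)/2$ or $(q+1)/2$; then no non-trivial power of $s$ is an involution in $\Sp_{2m}(q)$. Pairing $s$ with a torus-compatible companion of likewise odd order (possibly obtained as a suitable power of a Weyl-reflection conjugate of $s$), one builds a commutator-based word of length $O(q)$ whose evaluation reduces to a calculation inside the rank-one Levi $\PSp_2(q) \cong \PSL_2(q)$, where the underlying mixed identity is supplied by Theorem~\ref{thm:psl2}.

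For the lower bound, which has to hold for all $q$, I use a restriction-projection argument reducing to $\PSL_2(q)$. Suppose $w(x) = c_0 x^{a_1} c_1 \cdots c_{k-1} x^{a_k} c_k$ is a non-trivial mixed identity for $\PSp_{2m}(q)$ of length $k$, with no critical $c_i$ lifting to an involution in $\Sp_{2m}(q)$. Restrict $x$ to a long-root subgroup $H \cong \SL_2(q)$ inside $\Sp_{2m}(q)$, and project the restricted identity through the map $N_{\Sp_{2m}(q)}(H)/C_{\Sp_{2m}(q)}(H) \hookrightarrow \PGL_2(q)$; here the centralizer is $C_{\Sp_{2m}(q)}(H) \cong \Sp_{2m-2}(q)$ and the Weyl piece contributes at most a factor of $\mathbb{Z}/2$. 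Constants $c_i \notin N_{\Sp_{2m}(q)}(H)$ can be handled either by conjugating $H$ into generic position relative to the finite set $\{c_0,\dots,c_k\}$ or by iterated restriction. The resulting projected word is a mixed identity for $\PSL_2(q)$ with constants in $\PGL_2(q)$ of length $O(k)$, and Theorem~\ref{thm:psl2}, applied in the form $G\le\PGL_2(q)\rtimes\Aut(\finfield_q)$ with $f=e$, forces $k=\Omega(q)$.

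The principal difficulty is controlling the non-involution hypothesis under this projection: a non-involution $c_i \in \Sp_{2m}(q)$ whose square happens to lie in $C_{\Sp_{2m}(q)}(H)$ projects to an involution in $\PGL_2(q)$, which could become a critical involution constant of the projected word and spoil the direct application of Theorem~\ref{thm:psl2}. This must be excluded by a genericity argument on $H$, or by peeling off such constants inductively through auxiliary $\SL_2$-restrictions. The case $q$ even is the most delicate and is the reason the theorem asserts only the lower bound there: unipotent involutions are ubiquitous in $\Sp_{2m}(2^e)$, and it is unclear whether an involution-free short mixed identity of length $O(q)$ exists; constructing or ruling out such an identity remains open.
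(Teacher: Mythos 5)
Both halves of your proposal have genuine gaps, and in both cases the paper's actual mechanism is different. For the upper bound, your construction with a regular semisimple element $s$ of odd order is not a construction: the variable $x$ ranges over all of $\PSp_{2m}(q)$, so $s^x$ does not lie in any fixed Levi subgroup, and $s,s^x$ are in no controlled relative position; nothing in the sketch explains why the proposed commutator word would vanish identically. The mechanism that actually gives $O(q)$ (and which the paper reuses verbatim from Lemma~\ref{lem:psl_up_bd}) is the transvection trick: take a symplectic transvection $k=1_V+h$ with $x.h=f(x,v)v$; then $k$ and $k^x$ fix the common codimension-two subspace $H\cap H.x$ pointwise, so for the $\SL_2(q)$-law $v(x,y)$ of length $O(q)$ the elements $v(k,k^x)$ and $v(k^x,k)$ are unipotent and commute, and $[v(k,k^x),v(k^x,k)]$ is a mixed identity of length $O(q)$ whose only constants are powers of $k$. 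Since $k$ has order $p$, which is odd exactly when $q$ is odd, no critical constant lifts to an involution; your avoidance of transvections in favour of semisimple elements is unnecessary and, as sketched, unsupported.

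For the lower bound, the restriction--projection reduction does not exist as described. If some constant $c_i$ does not normalize your long-root subgroup $H$ --- the generic situation, which conjugating $H$ ``into generic position'' makes more likely, not less --- then the restricted word has constants in $\Sp_{2m}(q)$ rather than in $N_{\Sp_{2m}(q)}(H)$, and there is no homomorphism through which to project it to $\PGL_2(q)$; moreover Theorem~\ref{thm:psl2} (via Lemma~\ref{lem:psl_2_fnl_lem}) only controls constants from $\PGL_2(q)\rtimes\Aut(\finfield_q)$, not from an arbitrary overgroup of $\PSL_2(q)$. A sanity check shows the sketch cannot be completed in the stated form: Tomanov's identity $[g_0^xkg_0^x,k]$ has length $8$, so any correct argument must invoke the hypothesis on critical involutions at a precise technical point, and your proposal defers exactly that point (``to be excluded by a genericity argument''); if your reduction worked without it, it would contradict the length-$8$ identity. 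The paper's proof avoids any subgroup reduction: it substitutes the one-parameter family of symplectic transvections $k(\lambda)=1_V+\lambda h$, $x.h=f(x,v)v$, notes that the top-degree coefficient of the resulting polynomial survives precisely when $f(v.c_j,v)\neq 0$ for all intermediate constants, uses Lemma~\ref{lem:quad_elts_alt} to identify $c_j^2=1_{2m}$ with the form $f(\,\cdot\,.c_j,\cdot\,)$ being alternating (this is exactly where the involution hypothesis enters), bounds the bad set of $v$ by Schwartz--Zippel (Lemma~\ref{lem:non_alt_frm}), handles non-critical constants that happen to be involutions by the substitution $x_i\mapsto g_{-i}xg_i$ together with the Fulman--Guralnick--Stanton involution counts, and concludes by the polynomial method of Lemma~\ref{lem:poly_mthd} that any such identity has length greater than $q/2+1$.
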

	
	These results resemble analogous results of Tomanov \cite{tomanov1985generalized} and Gordeev \cite{gordeev1997freedom}, for algebraic groups over infinite fields. 
	One may also use these results for algebraic groups, combined with the 
	Schwartz-Zippel Lemma, 
	to prove lower bounds on the lengths of mixed 
	identities for finite groups of Lie type. 
	Indeed, we shall exploit these methods in a 
	forthcoming paper. 
	However, the results obtained by such methods would not be uniform in the rank, as our bounds here are. 
	
	The article is organized as follows. After the introduction we have a section covering basic observations. After that we have one section for each family of simple groups that is covered, i.e. $\PSL_n(q)$, $\PSp_{2m}(q)$, $\POmega_{2m-1}^\circ(q)$, and $\PSU_n(q)$. Various arguments for $\PSp_{2m}(q)$ and $\PSU_n(q)$ will follow the same lines as the prototypical argument for $\PSL_n(q)$ and we recommend the reader to read this case first. We end the paper with a section on further remarks and goals for the future.
	
	We apply the main results of this paper in \cite{bradford2023length} and answer a question from \cite{bradford2021lawless} on the length of non-solutions to equations with constants in linear groups.
	
	\section{Basic observations}
	
	Let $G$ be a finite group and $C\geq G$ be the overgroup of possible constants. Recall that a mixed identity $w\in C\ast\freegrp_r$ is called a \emph{shortest} mixed identity for $G$ with constants from $C$ if there is no shorter one, i.e.\ for $v\in C\ast\freegrp_r$ another mixed identity, we have $\norm{w}\leq\norm{v}$. Here $\norm{w}=l$ measures the length of the fixed word
	$$
	w=c_0 x_{i(1)}^{\varepsilon(1)} c_1 \cdots c_{l-1} x_{i(l)}^{\varepsilon(l)} c_l\in C\ast\freegrp_r,
	$$ 
	where $\varepsilon(j)=\pm1$ ($j=1,\ldots,l$) and $c_j\in C$ ($j=0,\ldots,l$). We always assume that
	$$
	x_{i(j)}^{\varepsilon(j)}=x_{i(j+1)}^{-\varepsilon(j+1)}
	$$ 
	for $j=1,\ldots,l-1$ implies $c_j\neq 1_C$; i.e.\ $w$ is \emph{reduced}. The word $w$ is called \emph{cyclically reduced} if $x_{i(l)}^{\varepsilon(l)}=x_{i(1)}^{-\varepsilon(1)}$ implies $c_lc_0\neq 1_C$.
	
	The first basic observation is that a shortest mixed identity for $G$ with constants from some given group $C$ is always cyclically reduced:
	
	\begin{lemma}\label{lem:shrt_id_cyc_red}
		Let $w\in C\ast\freegrp_r$ be a shortest mixed identity for $G$. Then $w$ is cyclically reduced.
	\end{lemma}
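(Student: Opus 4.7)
My plan is to argue by contradiction via cyclic conjugation, exploiting the fact that conjugation in $C \ast \freegrp_r$ preserves being a mixed identity. Indeed, for any $u \in C \ast \freegrp_r$ the word $u^{-1} w u$ evaluates to $u(g)^{-1} \cdot 1_G \cdot u(g) = 1_G$ on every input, and conjugation preserves non-triviality in a free product. Thus it suffices to exhibit, from any reduced but not cyclically reduced $w$, a conjugate $w'$ with $\norm{w'} < \norm{w}$: this yields either a strictly shorter mixed identity (contradicting minimality of $\norm{w}$) or, in the degenerate case where $w'$ reduces to the trivial word, a contradiction with the non-triviality built into the definition of a mixed identity (since then $w$ itself would be trivial).

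To produce $w'$, assume $w = c_0 x_{i(1)}^{\varepsilon(1)} c_1 \cdots c_{l-1} x_{i(l)}^{\varepsilon(l)} c_l$ is reduced with $x_{i(l)}^{\varepsilon(l)} = x_{i(1)}^{-\varepsilon(1)}$ and $c_l c_0 = 1_C$. I would conjugate by $u := c_0 x_{i(1)}^{\varepsilon(1)}$ and chase the free-product reductions: the outer pair $x_{i(1)}^{-\varepsilon(1)} x_{i(1)}^{\varepsilon(1)}$ cancels at the front, the seam $c_l c_0$ collapses to $1_C$, and the newly adjacent letters $x_{i(l)}^{\varepsilon(l)} x_{i(1)}^{\varepsilon(1)}$ also cancel since $x_{i(l)}^{\varepsilon(l)} = x_{i(1)}^{-\varepsilon(1)}$. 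What survives is the interior segment $c_1 x_{i(2)}^{\varepsilon(2)} c_2 \cdots x_{i(l-1)}^{\varepsilon(l-1)} c_{l-1}$, which is a sub-word of the reduced $w$ and therefore inherits reducedness, of length $l-2$ (collapsing to the lone constant $c_1$ when $l=2$; the case $l=1$ is vacuous since $x^{\varepsilon} \neq x^{-\varepsilon}$, and $l=0$ is impossible because a non-trivial pure constant never evaluates to $1_G$).

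This $w'$ is a conjugate of $w$, hence again a mixed identity (possibly trivial in $C \ast \freegrp_r$), and the dichotomy from the first paragraph yields a contradiction in either sub-case. I do not anticipate any serious obstacle: this is essentially the standard observation that a shortest representative of a conjugacy class in a free product is cyclically reduced, transported to the mixed-identity setting via conjugation-invariance. The only care required is the small-$l$ bookkeeping above and the observation that the surviving interior inherits reducedness from $w$, both of which are immediate from the definitions.
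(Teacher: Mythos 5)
Your proof is correct and is essentially the paper's argument: both rest on the observation that conjugation in $C\ast\freegrp_r$ preserves the property of evaluating to $1_G$ everywhere, so cyclic reduction of a non-cyclically-reduced $w$ yields a strictly shorter mixed identity, contradicting minimality. The paper packages this via the decomposition $w=u^{-1}vu$ with $v$ cyclically reduced (your degenerate cases, the lone constant $c_1$ and the trivial word, correspond exactly to its case $v\in C$), while you carry out a single explicit reduction step; the difference is purely presentational.
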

	
	\begin{proof}
		We can write $w$ as $w=u^{-1}vu$, where $u,v\in C\ast\freegrp_r$ and $v$ is cyclically reduced. If $w$ is a mixed identity for $G$, then $w(g_1,\ldots,g_r)=1_C=v(g_1,\ldots,g_r)^{u(g_1,\ldots,g_r)}$ for all $g_1,\ldots,g_r\in G$. Thus $v$ is also a mixed identity for $G$ whose length is at most $\norm{w}$. But we cannot have $v=c$ for a $c\in C$, since then if $c\neq 1_C$, we have $w(1_G,\ldots,1_G)=v(1_G,\ldots,1_G)^{u(1_G,\ldots,1_G)}=c^{u(1_G,\ldots,1_G)}\neq 1_C$. If $c=1_C$, then $w$ would be trivial. Hence $v\in C\ast\freegrp_r\setminus C$ is a shortest mixed identity and $u\in C$. The proof is complete.
	\end{proof}
	
	Fix a reduced word $w=c_0 x_{i(1)}^{\varepsilon(1)} c_1 \cdots c_{l-1} x_{i(l)}^{\varepsilon(l)} c_l\in C\ast\freegrp_r$. Define the sets of indices $J_0(w),J_+(w),J_-(w)\subseteq\set{1,\ldots,l-1}$ by $J_0(w)\coloneqq\set{j}[i(j)\neq i(j+1)]$, $J_+(w)\coloneqq\set{j}[i(j)=i(j+1)\text{ and }\varepsilon(j)=\varepsilon(j+1)]$, and $J_-(w)\coloneqq\set{j}[i(j)=i(j+1)\text{ and }\varepsilon(j)=-\varepsilon(j+1)]$, which partition the set $\set{1,\ldots,l-1}$. The constants $c_1,\ldots,c_{l-1}\in C$ are called \emph{intermediate constants}. The constants $c_j$ with $j\in J_-(w)$ are called \emph{critical constants}\label{def:crit_consts}.
	
	We have the following second observation which guarantees that we need to consider only words with one variable $x$:
	
	\begin{lemma}\label{lem:red_one_var_cs}
		Let $w=c_0 x_{i(1)}^{\varepsilon(1)} c_1 \cdots c_{l-1} x_{i(l)}^{\varepsilon(l)} c_l\in C\ast\freegrp_r$ be reduced. Then, assuming $l=\norm{w}\leq\card{G}$, there is a substitution $s\colon x_i\mapsto g_{-i} x g_i$ for $g_{\pm i}\in G$ ($i=1,\ldots,r$) such that in 
		$$
		w'\coloneqq w(s(x_1),\ldots,s(x_r))=c_0' x^{\varepsilon(1)} c_1' \cdots c_{l-1}' x^{\varepsilon(l)} c_l'\in C\ast\gensubgrp{x}
		$$ 
		we have $c_j'\neq 1_C$ for $j=1,\ldots,l-1$.
	\end{lemma}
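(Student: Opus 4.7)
The plan is a probabilistic counting argument over uniformly random substitutions. I would pick each of the $2r$ elements $g_{\pm i}$ independently and uniformly from $G$, and show that with positive probability every intermediate constant of $w'$ is nontrivial. Under the substitution $x_i \mapsto g_{-i} x g_i$, a direct computation expresses the $j$th intermediate constant of $w'$ as $c_j' = h_j^+ \, c_j \, h_{j+1}^-$, where
$$
h_j^+ = \begin{cases} g_{i(j)} & \text{if } \varepsilon(j) = +1, \\ g_{-i(j)}^{-1} & \text{if } \varepsilon(j) = -1, \end{cases}
\qquad
h_{j+1}^- = \begin{cases} g_{-i(j+1)} & \text{if } \varepsilon(j+1) = +1, \\ g_{i(j+1)}^{-1} & \text{if } \varepsilon(j+1) = -1. \end{cases}
$$

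Using the partition $\{1,\ldots,l-1\} = J_0(w) \sqcup J_+(w) \sqcup J_-(w)$, I would bound for each $j$ the number of tuples in $G^{2r}$ with $c_j' = 1_C$. If $j \in J_-(w)$, then $i(j) = i(j+1)$ and $\varepsilon(j) = -\varepsilon(j+1)$; in both sign patterns, the case analysis shows that $h_j^+$ and $h_{j+1}^-$ are mutually inverse powers of a single variable, so that $c_j'$ is a conjugate of $c_j$. Since $w$ is reduced, $c_j \neq 1_C$ for such $j$, whence $c_j' \neq 1_C$ automatically and the bad set is empty. If instead $j \in J_0(w) \cup J_+(w)$, then the factors $h_j^+$ and $h_{j+1}^-$ depend on two distinct variables among $g_{\pm 1}, \ldots, g_{\pm r}$, so the equation $c_j' = 1_C$ uniquely determines one of them in terms of the rest, bounding the bad set by $|G|^{2r-1}$.

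A union bound over $j$ then gives at most $(l-1)|G|^{2r-1} < |G|^{2r}$ bad tuples whenever $l \leq |G|$, so some substitution avoids every bad event simultaneously; for that choice, every $c_j'$ is nontrivial, as required. The main (modest) obstacle is handling the case $j \in J_-(w)$, where a naive counting argument fails because $c_j'$ depends on only one variable via conjugation and cannot be made to avoid $1_C$ probabilistically; this is rescued by the reducedness hypothesis, which forces $c_j \neq 1_C$ and hence $c_j' \neq 1_C$ independently of the substitution.
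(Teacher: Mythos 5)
Your proposal is correct and is essentially the paper's own argument: the same computation of $c_j'$ as $g_{\varepsilon(j)i(j)}^{\varepsilon(j)}c_j g_{-\varepsilon(j+1)i(j+1)}^{\varepsilon(j+1)}$, the same case split over $J_0(w)\cup J_+(w)$ versus $J_-(w)$ (where reducedness makes $c_j'$ a nontrivial conjugate of $c_j$), and the same union-bound count of at most $(l-1)\card{G}^{2r-1}<\card{G}^{2r}$ bad tuples. The probabilistic phrasing is just a restatement of the paper's counting.
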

	
	\begin{proof}
		We have that $c_j'=g_{\varepsilon(j) i(j)}^{\varepsilon(j)}c_j g_{-\varepsilon(j+1) i(j+1)}^{\varepsilon(j+1)}$ ($j=1,\ldots,l-1$). So among all the possible $\card{G}^{2r}$ choices for the constants $g_{\pm i}$ ($i=1,\ldots,r$), each condition $c_j'\neq 1_C$ for $j\in J_0(w)\cup J_+(w)$ rules out at most $\card{G}^{2r-1}$ tuples. If $j\in J_-(w)$, then we must have 
		$$
		c_j'=c_j^{g_{\varepsilon(j)i(j)}^{-\varepsilon(j)}}\neq 1_C,
		$$ since $c_j\neq 1_C$ by assumption. Hence, if $l-1<\card{G}$, i.e.\ $\card{G}^{2r-1}(l-1)<\card{G}^{2r}$, by counting, there must be one tuple $(g_i)_{i=\pm 1}^{\pm r}$ such that $c_j'\neq 1_C$ ($j=1,\ldots,l-1$).
	\end{proof}
	
	\begin{remark}\label{rem:cyc_red_cs}
		If $w$ is cyclically reduced and $l<\card{G}$, we can also guarantee $w'$ to be cyclically reduced. We have
		$$
		c_l'c_0'=g_{\varepsilon(l)i(l)}^{\varepsilon(l)}c_lc_0 g_{-\varepsilon(1)i(1)}^{\varepsilon(1)}.
		$$ 
		If $i(l)=i(1)$ and $\varepsilon(1)=-\varepsilon(l)$, then as $w$ is cyclically reduced, we must have $c_lc_0\neq 1_C$ and hence 
		$$
		c_l'c_0'=(c_lc_0)^{g_{\varepsilon(l)i(l)}^{-\varepsilon(l)}}\neq 1_C.
		$$
		In the opposite case, we rule out at most $\card{G}^{2r-1}$ further tuples. But $\card{G}^{2r-1}l<\card{G}^{2r}$, so there is a legal choice for $(g_i)_{i=\pm 1}^{\pm r}$.
	\end{remark}
	
	From this we get the following immediate non-optimal corollary with a short proof:
	
	\begin{corollary}\label{cor:shrt_id_one_var}
		There is a shortest mixed identity $w\in C\ast\gensubgrp{x}$ for $G$ with only one variable and all intermediate constants non-trivial. It is cyclically reduced and of length $\leq\card{G}$.
	\end{corollary}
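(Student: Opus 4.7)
I will combine the two preceding observations with an elementary length bound. Take any shortest mixed identity $w_0$ for $G$ with constants from $C$; by Lemma~\ref{lem:shrt_id_cyc_red} it is cyclically reduced, and the bound $\norm{w_0}\le\card{G}$ comes for free because $x^{\exp(G)}$ is already a mixed identity for $G$ (every element of $G$ is killed by the exponent), giving $\norm{w_0}\le\exp(G)\le\card{G}$.

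The hypothesis $\norm{w_0}\le\card{G}$ of Lemma~\ref{lem:red_one_var_cs} is thus satisfied, so I apply that lemma together with Remark~\ref{rem:cyc_red_cs} to pick a single substitution $s$ that simultaneously (i) collapses $w_0$ to a one-variable word $w\in C\ast\gensubgrp{x}$, (ii) makes every intermediate constant non-trivial, and (iii) preserves cyclic reducedness. Since the substitution does not change the length, $w$ is still a shortest mixed identity and has all the required properties.

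The main obstacle is the boundary case $\norm{w_0}=\card{G}$, where the counting in Remark~\ref{rem:cyc_red_cs} just fails to close (the extra constraint coming from cyclic reducedness pushes the number of forbidden tuples up to exactly $\card{G}^{2r}$). In that case one necessarily has $\exp(G)=\card{G}$, and I sidestep the lemma-remark route by exhibiting a suitable $w$ directly: for any non-trivial $g\in G$ the word $w=(xg)^{\card{G}}\in C\ast\gensubgrp{x}$ has length $\card{G}=\norm{w_0}$ (hence is still shortest), is a mixed identity since $\mathrm{ord}(xg)$ divides $\exp(G)=\card{G}$ for every $x\in G$, is cyclically reduced because $\varepsilon(1)=\varepsilon(l)=+1$ leaves the cyclic reducedness condition vacuous, and has all intermediate constants equal to $g\neq 1_G$. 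The trivial group $G=\trivgrp$ is covered separately by $w=x$.
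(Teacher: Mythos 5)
Your proof is correct and follows essentially the same route as the paper: bound the shortest length by a power word, treat the boundary case $\norm{w_0}=\card{G}$ with the explicit identity $(xg)^{\card{G}}$, and otherwise apply Lemma~\ref{lem:red_one_var_cs} together with Remark~\ref{rem:cyc_red_cs}. The only (harmless) deviations are your use of $x^{\exp(G)}$ instead of $x^{\card{G}}$ for the initial bound and the unnecessary observation that $\exp(G)=\card{G}$ in the boundary case, since $(hg)^{\card{G}}=1_G$ holds for all $h\in G$ by Lagrange anyway.
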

	
	\begin{proof}
		Since $x^{\card{G}}$ is a mixed identity for $G$, a shortest mixed identity $w\in C\ast\freegrp_r$ for $G$ has length at most $\card{G}$. Clearly, we may assume $G\neq\trivgrp$, since otherwise $w=x$ is a shortest mixed identity. Hence either $w=(xc)^{\card{G}}$ (for $c\in G\setminus\trivgrp\subseteq C$) is a shortest mixed identity with all intermediate constants non-trivial, or there is a shortest mixed identity $w\in C\ast\freegrp_r$ of length $<\card{G}$, which by Lemma~\ref{lem:shrt_id_cyc_red} is cyclically reduced. Applying Lemma~\ref{lem:red_one_var_cs} and Remark~\ref{rem:cyc_red_cs} gives a shortest mixed identity of length $<\card{G}$ with only one variable and all intermediate constants non-trivial; it is cyclically reduced.
	\end{proof}
	
	The next lemma proves that there are no short identities of length less than four if the groups $G$ and $C$ fulfill some mild assumptions.
	
	\begin{lemma}\label{lem:no_shrt_ids}
		Let $w\in C\ast\gensubgrp{x}$ be of length $\norm{w}=l$. Let $G\neq\trivgrp$ be non-abelian and $\C_C(G)=\trivgrp$ if $l\leq 2$, and let $C=G$, $\card{G}$ be even when $l=3$. Then $w$ is not a mixed identity for $G$ with constants from $C$. 
	\end{lemma}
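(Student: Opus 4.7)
The plan is to proceed by case analysis on $l = \norm{w}$. For $l = 0$ the reduced word $w$ is simply a non-trivial constant, so $w(g) = w \neq 1_C$ for every $g \in G$. For $l = 1$, write $w = c_0 x^{\varepsilon} c_1$: if $c_0 c_1 \neq 1_C$, evaluate at $x = 1_G$; otherwise $w(g) = c_0 g^{\varepsilon} c_0^{-1}$ vanishes only at $g = 1_G$, so any element of $G \setminus \{1_G\}$ (which exists since $G \neq \trivgrp$) supplies a witness.

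For $l = 2$, write $w = c_0 x^{\varepsilon_1} c_1 x^{\varepsilon_2} c_2$ and use $w(1_G) = 1_C$ to eliminate $c_2 = (c_0 c_1)^{-1}$. If $\varepsilon_1 = \varepsilon_2$ (WLOG both $+1$, by the symmetry $x \mapsto x^{-1}$), the identity $w(g) = 1_C$ rearranges to $c_1^{-1} g c_1 = g^{-1}$ for every $g \in G$, presenting the inversion map as an automorphism and forcing $G$ abelian — contradicting the hypothesis. If $\varepsilon_1 \neq \varepsilon_2$, reduction forces $c_1 \neq 1_C$, yet the computation yields $g c_1 g^{-1} = c_1$ for all $g$, so $c_1 \in \C_C(G) = \trivgrp$ — again a contradiction.

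The case $l = 3$ is more delicate. Write $w = c_0 x^{\varepsilon_1} c_1 x^{\varepsilon_2} c_2 x^{\varepsilon_3} c_3$ and eliminate $c_3 = (c_0 c_1 c_2)^{-1}$ via $w(1_G) = 1_C$. When the three exponents are not all equal, at least one intermediate constant $c_j$ with $j \in \{1,2\}$ is forced non-trivial by reduction. For each mixed-sign pattern, one substitutes $g = c_j$ into $w(g) = 1_C$ (permitted since $C = G$), and a short algebraic collapse yields $c_j = 1_C$, contradicting reduction. For instance, in the pattern $(+,-,+)$ the identity becomes $(g c_1 g^{-1})(c_2 g c_2^{-1}) = c_1$; plugging in $g = c_1$ gives $c_1 \cdot (c_2 c_1 c_2^{-1}) = c_1$, so $c_2 c_1 c_2^{-1} = 1_C$ and $c_1 = 1_C$. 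The other five mixed sign patterns are handled analogously by choosing a critical constant to substitute for $g$.

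The remaining sub-case has $\varepsilon_1 = \varepsilon_2 = \varepsilon_3$, and by $x \mapsto x^{-1}$ we may assume all are $+1$. The identity becomes $g c_1 g c_2 g = c_1 c_2$ for every $g \in G$. Substituting $g = c_1^{-1} h$ (legitimate since $c_1 \in C = G$) yields $h^2 (c_2 c_1^{-1}) h = c_1^2 c_2$; setting $h = 1_G$ gives the auxiliary relation $c_2 c_1^{-1} = c_1^2 c_2$, whence the identity simplifies to $a h a^{-1} = h^{-2}$ for all $h \in G$, with $a = c_2 c_1^{-1}$. Thus the map $h \mapsto h^{-2}$ on $G$ coincides with conjugation by $a$, and so must be a bijection. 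But $\card{G}$ is even, so Cauchy's theorem supplies an involution $t \in G$, and then $t^{-2} = 1_G = 1_G^{-2}$ contradicts injectivity. This last step is the main obstacle of the proof, and it is precisely here that the hypothesis $\card{G}$ even — together with $C = G$, needed to legitimise the substitution $g = c_1^{-1} h$ — is decisive.
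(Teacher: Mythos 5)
Your proof is correct, and for $l\leq 2$ it coincides with the paper's argument (inversion as an automorphism forces $G$ abelian in the equal-sign case; the centralizer hypothesis kills the mixed-sign case). Where you diverge is the treatment of $l=3$. The paper first normalizes by cyclic rotation and $x\mapsto x^{-1}$ to the two shapes $xaxbx^{\pm 1}c$ and then runs a single argument: the identity forces the map $g\mapsto gagb$ to agree with an injective map, hence $g\mapsto gaga$ does too, yet an involution $t$ (available since $\card{G}$ is even) gives $a^{-1}\cdot a\cdot a^{-1}\cdot a = (ta^{-1})\cdot a\cdot(ta^{-1})\cdot a=1_G$, contradicting injectivity; here $C=G$ is used to substitute $a^{-1}$ and $ta^{-1}$. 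You instead split into the six mixed sign patterns, which you dispose of by plugging a critical constant in for the variable (I checked that the remaining five patterns do collapse exactly as your sample computation does), and reserve the parity argument for the all-equal-sign pattern, where your reduction to $aha^{-1}=h^{-2}$ with $a=c_2c_1^{-1}\in G$ is a clean variant of the paper's injectivity-versus-involution step. The trade-off: the paper's rotation trick is shorter and uniform, while your decomposition makes visible a small refinement not stated in the paper, namely that for the mixed sign patterns only $C=G$ and reducedness are needed, so the evenness of $\card{G}$ is genuinely used only when all three exponents agree (up to rotation, the paper's case (b)).
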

	
	\begin{proof}
		Clearly, any word of length one induces an injective map, so cannot be constant if $G\neq\trivgrp$. If $l=2$, then, up to rotation and replacing $x$ by $x^{-1}$, either (a) $w=xcxc^{-1}$ or (b) $w=xcx^{-1}c^{-1}$ (for $c\in C$). In Case~(a), if $w$ induces the trivial map, we must have $g^{c^{-1}}=g^{-1}$ for all $g\in G$, in particular, $g\mapsto g^{-1}$ would an automorphism of $G$, so $G$ would be abelian, which is not the case. In Case~(b), if $w$ is trivial on $G$, then $c\in\C_C(G)=\trivgrp$, so that $c=1_G$ and $w$ would not be reduced. If $l=3$, up to rotation and replacing $x$ by $x^{-1}$, we have that (a) $w=xaxbx^{-1}c$; or (b) $w=xaxbxc$  with $abc=1_G$.
		Hence, if $w$ is a mixed identity, then $xaxbx^\varepsilon cc^{-1}x^{-\varepsilon}=xaxb=c^{-1}x^{-\varepsilon}$. Thus we get $xaxa=c^{-1}x^{-\varepsilon}b^{-1}a$. This cannot hold when $\card{G}$ is even, since then there is an element $g\in G$ of order two, so that $ga^{-1}.a.ga^{-1}.a=1_G=a^{-1}.a.a^{-1}.a$, but $a^{-1}\neq ga^{-1}$ and both are from $G$, since by assumption $C=G$. However, the map $x\mapsto c^{-1}x^{-\varepsilon}b^{-1}a$ is injective, which gives a contradiction.
	\end{proof}
	
	\section{Reduction to almost simple groups of Lie type}
	
	In this section, we prove  Theorem~\ref{thm:red_almst_smpl_grp}, modulo the statements about symplectic and orthogonal groups. The proof is based on the following two lemmas.
	
	\begin{lemma}\label{lem:ab_prod_cse_shrt_id}
		If $G$ has a non-trivial center, then it satisfies the mixed identity $[x,c]\in G\ast\gensubgrp{x}$ for $c\in \C(G)\setminus\trivgrp$ of length $2$. Similarly, if $G$ is a non-trivial direct product $G=A\times B$, then it satisfies the mixed identity $[a^x,b]\in G\ast\gensubgrp{x}$, for non-trivial $a\in A \times\trivgrp$, $b \in\trivgrp\times B$, which is of length $4$.
	\end{lemma}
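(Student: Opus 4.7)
The plan is to verify both mixed identities by direct evaluation, check their lengths against the definition of $\norm{\cdot}$, and confirm that they are non-trivial elements of $G\ast\gensubgrp{x}$.

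For the first part, I would take $w(x)\coloneqq [x,c]=x^{-1}c^{-1}xc\in G\ast\gensubgrp{x}$, noting that $c\in\C(G)\setminus\trivgrp$. The word has two occurrences of the variable and the intermediate and outer constants $c^{\pm 1}$ are non-trivial, so it is reduced of length $2$. To see that $w$ is an identity, for any $g\in G$ the centrality of $c$ gives $c^{-1}gc=g$, hence $w(g)=g^{-1}c^{-1}gc=g^{-1}g=1_G$. Non-triviality of $w$ in the free product is immediate because $c\neq 1_G$, so $w$ cannot collapse to the empty word in $G\ast\gensubgrp{x}$.

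For the second part, I would expand
\[
[a^x,b]=x^{-1}a^{-1}x\cdot b^{-1}\cdot x^{-1}ax\cdot b\in G\ast\gensubgrp{x},
\]
which has four occurrences of $x$ alternating with the non-trivial constants $a^{\pm 1}$ and $b^{\pm 1}$, so it is reduced of length $4$. The key observation is that $A\times\trivgrp$ is a normal subgroup of $G=A\times B$; hence for any $g\in G$ the conjugate $a^g$ lies in $A\times\trivgrp$. Since elements of $A\times\trivgrp$ commute pointwise with elements of $\trivgrp\times B$, we conclude $[a^g,b]=1_G$ for every $g\in G$. Non-triviality in $G\ast\gensubgrp{x}$ follows from $a,b\neq 1_G$ lying in distinct free factors of no common syllable.

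There is essentially no obstacle here; the main care required is the bookkeeping of the reduced length so that the claimed values $2$ and $4$ match the definition of $\norm{w}$, together with the observation that the two subgroups $A\times\trivgrp$ and $\trivgrp\times B$ are respectively $G$-normal and central to one another, which is exactly what makes the commutator collapse after conjugation by $x$.
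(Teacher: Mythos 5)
Your verification is correct and is exactly the ``trivial computation'' the paper leaves implicit: evaluate, check reducedness, and read off the lengths $2$ and $4$. One small phrasing slip: $a$ and $b$ do not lie in distinct free factors of $G\ast\gensubgrp{x}$ (both are constants from $G$); non-triviality follows simply from the fact that the words are reduced with all intermediate constants non-trivial, which you had already established.
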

	
	\begin{proof}
		A trivial computation.
	\end{proof}
	
	\begin{lemma}\label{lem:nor_subgrp}
		Let $G$ be a finite group and let $\trivgrp\neq N\trianglelefteq G$ be a normal subgroup. Suppose $N$ has a mixed identity with constants in $G$ of length $l$. Then $G$ has a mixed identity of length at most $2l$. 
	\end{lemma}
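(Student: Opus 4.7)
The plan is to substitute each variable by a conjugate of a fixed non-trivial element of $N$, exploiting normality of $N$ to guarantee that the substituted arguments lie in $N$, and then verify that the resulting word has length at most $2l$ and remains non-trivial in the free product.

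First I would fix a reduced representative $w = c_0 x_{i(1)}^{\varepsilon(1)} c_1 \cdots c_{l-1} x_{i(l)}^{\varepsilon(l)} c_l \in G \ast \freegrp_r$ of the given mixed identity for $N$, so that $c_1, \ldots, c_{l-1} \neq 1_G$. Picking $n \in N \setminus \trivgrp$, which exists since $N \neq \trivgrp$, I would define
$$
w' \coloneqq w(x_1 n x_1^{-1}, \ldots, x_r n x_r^{-1}) = c_0 \bigl(x_{i(1)} n^{\varepsilon(1)} x_{i(1)}^{-1}\bigr) c_1 \cdots \bigl(x_{i(l)} n^{\varepsilon(l)} x_{i(l)}^{-1}\bigr) c_l.
$$
Each $x_i^{\pm 1}$ is replaced by a word with two variable occurrences, so $w'$ has at most $2l$ variable occurrences; and for any $g_1, \ldots, g_r \in G$, normality of $N$ gives $g_i n g_i^{-1} \in N$, whence $w'(g_1, \ldots, g_r) = w(g_1 n g_1^{-1}, \ldots, g_r n g_r^{-1}) = 1_G$ because $w$ vanishes on $N^r$.

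The main obstacle I foresee is confirming that $w'$ is non-trivial in $G \ast \freegrp_r$. I would establish this by examining the syllable decomposition of the displayed expression: its $G$-syllables are (possibly) $c_0$, then alternately $n^{\varepsilon(j)}$ and $c_j$ for $j = 1, \ldots, l-1$, then $n^{\varepsilon(l)}$, and (possibly) $c_l$, all separated by the single-letter $\freegrp_r$-syllables $x_{i(j)}^{\pm 1}$. Since $n \neq 1_G$ and each $c_j$ with $1 \leq j \leq l-1$ is non-trivial by reducedness of $w$, every $G$-syllable standing between two $\freegrp_r$-letters is non-trivial, so no pair of adjacent $\freegrp_r$-letters merges. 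Hence $w'$ is already in reduced form in the free product, and in particular $w' \neq 1$.
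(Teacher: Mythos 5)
Your overall strategy (substitute a conjugate of a fixed $n\in N\setminus\trivgrp$ for each variable and check the result is still reduced) is the same as the paper's, but there is a genuine gap at the decisive step. You assert that ``each $c_j$ with $1\leq j\leq l-1$ is non-trivial by reducedness of $w$.'' That is not what reducedness gives you: in the paper's convention a word is reduced if $c_j\neq 1_C$ whenever $x_{i(j)}^{\varepsilon(j)}=x_{i(j+1)}^{-\varepsilon(j+1)}$, so intermediate constants sitting between two occurrences of the \emph{same} variable with the \emph{same} exponent (or between different variables) are allowed to be trivial. At such a junction with $c_j=1_G$ and $i(j)=i(j+1)$, the letters $x_{i(j)}^{-1}$ and $x_{i(j+1)}$ in your $w'$ cancel, leaving $n^{\varepsilon(j)}n^{\varepsilon(j+1)}$ in the middle; if $n$ has order $2$ the collapse can cascade. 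A concrete failure: let $N$ be a non-trivial normal subgroup of exponent $2$ and $w=x^2$, a mixed identity for $N$ of length $2$. Then your substitution yields $w'=(xnx^{-1})^2=xn^2x^{-1}=1$ in $G\ast\gensubgrp{x}$, so $w'$ is not a mixed identity at all. So the non-triviality argument, as written, is wrong, not merely incomplete.

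The missing ingredient is a preprocessing step guaranteeing that \emph{all} intermediate constants are non-trivial before you conjugate. This is exactly what the paper does: it first invokes Corollary~\ref{cor:shrt_id_one_var} (built on Lemma~\ref{lem:red_one_var_cs}, a counting argument over substitutions $x_i\mapsto g_{-i}xg_i$ with $g_{\pm i}\in N$) to replace the given identity by a one-variable mixed identity for $N$ with constants in $G$, of length at most $l$ (and at most $\card{N}$), in which every $c_j$ with $1\leq j\leq l-1$ is non-trivial; only then does it substitute $n^x$, and the syllable argument you sketch goes through verbatim. If you add this reduction (or otherwise dispose of the positions $j\in J_+(w)\cup J_0(w)$ with $c_j=1_G$), your proof is correct and coincides with the paper's.
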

	
	\begin{proof}
		Let $w\in G\ast\freegrp_r$ be a mixed identity for $N$ of length $l$. By Corollary~\ref{cor:shrt_id_one_var}, we may assume that $w$ is of length $l\leq\card{N}$, has only one variable, and all intermediate constants of $w$ are non-trivial, i.e.\ 
		$$
		w=c_0 x^{\varepsilon(1)} c_1\cdots c_{l-1} x^{\varepsilon(l)} c_l
		$$ 
		with $c_j\neq1_G$ for $1\leq j\leq l-1$. For $n\in N\setminus\trivgrp$ set $v\coloneqq w(n^x)$. Clearly, $\im(v)\subseteq\im(w)=\trivgrp$, so it suffices to check that $v\neq 1_G$ is non-trivial in $G\ast\gensubgrp{x}$. But there is no cancellation in $v$, since $x^{\varepsilon(j)} c_j x^{\varepsilon(j+1)}$ becomes $x^{-1} n^{\varepsilon(j)}x c_j x^{-1} n^{\varepsilon(j+1)}x$, so $v$ is non-trivial of length $2l$.
	\end{proof}
	
	\begin{remark}
		It is clear that if $w\in\freegrp_r$ is an identity for the group $G$, then $w$ is also an identity for every subgroup $H$ and every quotient $Q$ of $G$. In particular, the length of the shortest identities (without constants) for $H$ and $Q$ are at most the length of a shortest identity for $G$. The analogous statements for mixed identities are false: Let $H=\PSL_2(q)$ and $G=\PSL_2(q)\times C_2$. Then, $H$ is a subgroup and a quotient of $G$; by Lemma~\ref{lem:ab_prod_cse_shrt_id}, $G$ has a mixed identity of length $2$, whereas by Theorem~\ref{thm:psl} the length of a shortest mixed identity for $H$ is $\Theta(q)$. 
	\end{remark}
	
	Let's recall Fitting's structure theorem.
	
	\begin{theorem}[Fitting]\label{thm:fitting}
		Let $G$ be a non-trivial finite group and suppose $G$ has no non-trivial abelian normal subgroup. Then there exist positive integers $k$ and $l_1,\ldots,l_k$ and distinct non-abelian finite simple groups $H_1,\ldots,H_k$ such that: 
		$$
		S=\prod_{i=1}^k H_i ^{l_i}\trianglelefteq G\leq\prod_{i=1}^k \Aut(H_i)^{l_i}\wr S_{l_i}=\Aut(S)\text{.}
		$$
		Here the socle $S=\soc(G)$ of $G$ is the product $\prod_{i=1}^k H_i^{l_i}$.
	\end{theorem}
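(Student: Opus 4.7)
The plan is to construct the socle $S = \soc(G)$ explicitly as a product of non-abelian finite simple groups and then show that conjugation embeds $G$ into $\Aut(S)$.

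First I would analyze a single minimal normal subgroup $N \trianglelefteq G$. Any characteristic subgroup of $N$ is normal in $G$, so minimality forces $N$ to be characteristically simple. A standard structure result for finite characteristically simple groups (obtained by taking a minimal normal subgroup $T \leq N$, noting that $T$ is simple and that the product of its $\Aut(N)$-conjugates is characteristic in $N$, hence all of $N$) gives $N \cong T^l$ for some simple group $T$ and some $l \geq 1$. The hypothesis that $G$ has no non-trivial abelian normal subgroup rules out $T$ being cyclic of prime order, so $T$ is non-abelian simple.

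Second, let $S = \soc(G)$ be the subgroup generated by all minimal normal subgroups of $G$. Distinct minimal normal subgroups $N_1, N_2$ intersect trivially by minimality, and then $[N_1, N_2] \subseteq N_1 \cap N_2 = \trivgrp$ forces them to commute elementwise. An induction over the finite collection of minimal normal subgroups then shows $S$ is their internal direct product, hence a direct product of non-abelian finite simple groups. Grouping by isomorphism class yields $S = \prod_{i=1}^k H_i^{l_i}$ with pairwise non-isomorphic non-abelian simple groups $H_1,\ldots,H_k$. Conjugation in $G$ gives a homomorphism $\varphi\colon G \to \Aut(S)$ whose kernel is $\C_G(S)$; if $\C_G(S)$ were non-trivial, it would contain a minimal normal subgroup of $G$, which would lie inside $S$ while centralizing $S$, hence inside $\Z(S) = \trivgrp$, a contradiction. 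So $\varphi$ embeds $G$ into $\Aut(S)$, giving the inclusion $S \trianglelefteq G \leq \Aut(S)$.

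The main remaining obstacle is identifying $\Aut(S)$ with $\prod_{i=1}^k \Aut(H_i) \wr S_{l_i}$. The minimal normal subgroups of $S$ are precisely the $\sum_i l_i$ simple direct factors (using non-abelian simplicity and triviality of $\Z(H_i)$), so any automorphism of $S$ permutes them; since the $H_i$ are pairwise non-isomorphic this permutation must preserve each isotypic block $H_i^{l_i}$, reducing the problem to computing $\Aut(H_i^{l_i})$. A standard calculation identifies the latter with $\Aut(H_i) \wr S_{l_i}$: the permutation of factors lives in $S_{l_i}$, while on each factor the automorphism restricts to an element of $\Aut(H_i)$. Taking products over $i$ yields the claimed description of $\Aut(S)$ and completes the proof.
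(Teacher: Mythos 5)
The paper states this as a recalled classical structure theorem and gives no proof of its own, so there is nothing to compare against; your argument is the standard textbook proof and is correct. Two compressed points deserve a word of care: in the characteristically simple step, the simplicity of the minimal normal subgroup $T\leq N$ is deduced \emph{after} one knows $N$ is a direct product of automorphic images of $T$ (a normal subgroup of a direct factor is normal in the whole product), not before; and the claim that $\soc(G)$ is the direct product of \emph{all} minimal normal subgroups uses their non-abelianness --- if $N_1$ were contained in the product of the others it would centralize itself, contradicting $N_1$ non-abelian --- whereas for general groups the socle is only a direct product of \emph{some} of them. With those points made explicit, the proof is complete.
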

	
	%\begin{remark}
	%    Note that in the situation of Theorem~\ref{thm:fitting}, we have $S\leq G\leq\Aut(G)\leq\Aut(S)$ and $S=\soc(G)=\soc(\Aut(G))=\soc(\Aut(S))$.
	%\end{remark}
	
	Now we are ready to prove Theorem~\ref{thm:red_almst_smpl_grp}:
	
	\begin{proof}[Proof of Theorem~\ref{thm:red_almst_smpl_grp}]
		If $G$ has a non-trivial abelian normal subgroup, then, by Lemma~\ref{lem:ab_prod_cse_shrt_id} and Lemma~\ref{lem:nor_subgrp}, it has a mixed identity of length at most $4$. Otherwise, $G$ is as in Theorem~\ref{thm:fitting}. Again by Lemma~\ref{lem:ab_prod_cse_shrt_id}, if $k\geq2$ or some $l_i\geq 2$ ($i\in\set{1,\ldots,k}$), then the socle of $G$ satisfies a mixed identity of length $4$, as it is a non-trivial direct product. By Lemma~\ref{lem:nor_subgrp}, the group $G$ satisfies a mixed identity of length at most $8$. 
		
		Otherwise $G$ is almost simple, with socle $S$ (a non-abelian finite simple group). If $S$ is alternating, then, by the argument on page~\pageref{fct:bd_ids_alt_grps} and Lemma~\ref{lem:nor_subgrp}, the group $G$ has a mixed identity of length at most $120$. 
		%If $G=S_n$, there is even a mixed identity of length $12$ (take $w=[x,\tau]^6\in S_n\ast\gensubgrp{x}$ for a transposition $\tau$). 
		If $S$ is sporadic, then, by Lemma~\ref{lem:nor_subgrp}, the group $G$ has a mixed identity of bounded length. The cases of the symplectic and odd-dimensional orthogonal 
		groups are deferred, to Sections~\ref{SympSect} and~\ref{OddOrthSect}, respectively. 
	\end{proof}
	
	Theorem~\ref{thm:red_almst_smpl_grp} gives now the following optimal improvement of Corollary~\ref{cor:shrt_id_one_var}:
	
	\begin{corollary} 
		Every finite group $G$ has a mixed identity of length $O(|G|^{1/3}).$
	\end{corollary}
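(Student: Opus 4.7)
The plan is to apply Theorem~\ref{thm:red_almst_smpl_grp} to reduce the claim, via case analysis, to the bounds on almost simple groups established later in the paper. If $G$ is not almost simple, Theorem~\ref{thm:red_almst_smpl_grp} supplies a mixed identity of length at most $8$, which is $O(|G|^{1/3})$ once the finitely many $G$ with $|G|^{1/3}<8$ are absorbed into the implicit constant. So we may assume $G$ is almost simple with socle $S$. Then Lemma~\ref{lem:nor_subgrp} applied to $S\trianglelefteq G$ turns any mixed identity of length $\ell$ for $S$ with constants from $G$ into one of length at most $2\ell$ for $G$, and since $|S|\le|G|$ we have $|S|^{1/3}\le|G|^{1/3}$; so it suffices to produce a mixed identity of length $O(|S|^{1/3})$ for each possible $S$.

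Next I would run the case split dictated by Theorem~\ref{thm:red_almst_smpl_grp}. If $S$ is alternating or sporadic, the identity $[x,\sigma]^{30}$ from the discussion before Lemma~\ref{lem:nor_subgrp2} (and the analogous finite list of identities for sporadic socles) gives a constant-length mixed identity, which is trivially $O(|S|^{1/3})$. If $S=\PSp_{2m}(q)$ with $m\ge 2$, or $S=\POmega^\circ_{2m-1}(q)$ in one of the cases of Tomanov, the length is again bounded by an absolute constant. For $S=\PSL_n(q)$ with $n\ge 2$, Theorem~\ref{thm:psl} gives a mixed identity of length $O(q)$, and since $|S|$ is of order $q^{n^2-1}/\gcd(n,q-1)$ this is at least a constant multiple of $q^{3}$, so $q=O(|S|^{1/3})$. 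For $S=\PSU_n(q)$ with $n\ge 3$ (the case $n=2$ reducing to $\PSL_2(q)$), Theorem~\ref{thm:unitry_groups_main_thm} gives length $O(q^2)$, while $|S|$ is of order at least $q^{n^2-1}\ge q^{8}$, so again $q^2=O(|S|^{1/3})$.

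The hard part is that Theorem~\ref{thm:red_almst_smpl_grp} only excludes two specific Lie-type families from requiring long identities, so in principle $S$ could also be an even-dimensional orthogonal group $\POmega^\pm_{2m}(q)$, one of the remaining odd-dimensional orthogonals, an exceptional group of Lie type, or a twisted group such as Suzuki or Ree, none of which is treated explicitly in the present paper. For these I would rerun the prototypical argument used for $\PSL_n$ in the relevant natural representation to extract a mixed identity of length polynomial in $q$ with a fixed exponent; because a simple Lie-type group of rank $r$ over $\finfield_q$ has order of the form $q^{\Theta(r^2)}$, such a bound immediately yields the $O(|S|^{1/3})$ estimate once $r$ is large enough, and the finitely many low-rank exceptions can be absorbed into the implicit constant.
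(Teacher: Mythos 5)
Your reduction and the cases you do treat (non-almost-simple via Theorem~\ref{thm:red_almst_smpl_grp}; alternating, sporadic, symplectic and the Tomanov orthogonal cases via bounded-length identities and Lemma~\ref{lem:nor_subgrp}; $\PSL_n(q)$ via Theorem~\ref{thm:psl} with $|S|\geq cq^3$; $\PSU_n(q)$ via Theorem~\ref{thm:unitry_groups_main_thm} with $|S|\geq q^8$) are fine. The genuine gap is your last paragraph. The socle may also be an even-dimensional orthogonal group $\POmega^\pm_{2m}(q)$, an odd-dimensional orthogonal group in the remaining case ($m$ even, $q\equiv 3$ mod $4$), an exceptional group, or a Suzuki or Ree group, and for none of these does the paper provide a mixed identity of any controlled length; indeed the introduction explicitly defers these families to ongoing work, and the end of the $\PSL_n$ section explains a concrete obstruction (alternating critical constants killing the leading coefficient) showing that the transvection argument does not simply ``rerun'' in the presence of forms or extra automorphisms. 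So the assertion that one can extract a polynomial-in-$q$ identity for these groups by the same method is exactly the open part of the program, not something you can invoke. Moreover, your fallback that ``the finitely many low-rank exceptions can be absorbed into the implicit constant'' is incorrect: low rank does not mean finitely many groups. For each fixed exceptional or twisted type (e.g.\ ${}^2B_2(q)$ with $|S|\sim q^5$, or $G_2(q)$ with $|S|\sim q^{14}$) there are infinitely many $q$, the rank never becomes ``large enough,'' and a bound of length $q^{O(1)}$ with an uncontrolled exponent does not give $O(|S|^{1/3})$; for Suzuki groups one would genuinely need length $O(q^{5/3})$, which requires an argument.

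The paper's own proof sidesteps all of this: it combines Theorem~\ref{thm:red_almst_smpl_grp} with Lemma~\ref{lem:nor_subgrp2} and then quotes \cite{bradford2019lie}*{Theorem 1.1}, which supplies identities \emph{without} constants of length $O(|S|^{1/3})$ for the finite simple groups, uniformly over all families (a law for the socle is in particular a mixed identity, and Lemma~\ref{lem:nor_subgrp} at most doubles its length when passing to $G$). If you want a self-contained argument along your lines, you must either cite such an external result for the families this paper does not treat, or actually construct identities for them, which is not routine.
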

	
	\begin{proof}
		This is a consequence of Theorem~\ref{thm:red_almst_smpl_grp}, the reduction to simple groups by Lemma~\ref{lem:nor_subgrp2}, and the bounds obtained for simple groups in \cite{bradford2019lie}*{Theorem 1.1}. 
	\end{proof}
	%We close this section with a remark:
	
	%\begin{remark}\label{rem:rest_id_nor_subgrp}
	%    Assume the finite group $G$ has a mixed identity $w\in C\ast\gensubgrp{x}$ of length $l$ with constants in $C\trianglerighteq G$, with all intermediate constants non-trivial, as we may assume by Lemma~\ref{lem:red_one_var_cs}. Then, by Lemma~\ref{lem:nor_subgrp} for $n\in G\setminus\trivgrp$, $v=w(n^x)\in C\ast\gensubgrp{x}$ is a mixed identity for $C$ of length $2l$. Conversely, every identity $v\in C\ast\gensubgrp{x}$ for $C$ is a mixed identity of $G$ with constants in $C$ by restriction.
	%\end{remark}
	
	%This principle, which we already mentioned in the introduction, of passing from a normal subgroup to the whole group will be exploited later.
	
	\section{The projective special linear groups \texorpdfstring{$\PSL_n(q)$}{PSLn}}
	
	In this section, we prove Theorem~\ref{thm:psl}. We start with the construction of a mixed identity for $\PSL_n(q)$.
	
	\begin{lemma}\label{lem:psl_up_bd}
		There is a mixed identity of length $O(q)$ for $\PSL_n(q)$.
	\end{lemma}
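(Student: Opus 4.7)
The plan is to construct an explicit mixed identity $w(x)\in\PSL_n(q)\ast\gensubgrp{x}$ of length $O(q)$ using a transvection as the sole constant. Fix $c\in\SL_n(q)$ a transvection, i.e.\ $c=I+u\phi$ with $u\in\finfield_q^n\setminus\{0\}$, $\phi\in(\finfield_q^n)^\ast\setminus\{0\}$, and $\phi(u)=0$, projected to $\PSL_n(q)$; then $c$ has order $p$, and for any $x$ the conjugate $xcx^{-1}=I+(xu)(\phi\circ x^{-1})$ is again a transvection. The key structural observation is that the product $y(x)\coloneqq c\cdot xcx^{-1}$ fixes pointwise the codimension-$2$ subspace $\ker(\phi)\cap\ker(\phi\circ x^{-1})$ and preserves the $2$-plane $V(x)=\gensubsp{u,xu}$; its restriction to $V(x)$ is a product of two transvections in $\SL(V(x))\cong\SL_2(q)$.

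First I would establish the auxiliary claim: $\SL_2(q)$ admits a mixed identity $v(y)\in\SL_2(q)\ast\gensubgrp{y}$ of length $O(q)$ whose sole constant is a transvection of $\SL_2(q)$. This is based on the Cayley--Hamilton ring identity $(g^q-g)(g^{q+1}-1)=0$ in $M_2(\finfield_q)$ for every $g\in\SL_2(q)$, which follows from $g^2-\tr(g)g+1=0$ combined with the Frobenius relation $\tr(g)^q=\tr(g)$ (using that $\tr(g)\in\finfield_q$). The ring identity encodes uniformly the three conjugacy types of $g$ (unipotent, split semisimple, non-split semisimple), and I would convert it into a group identity of length $O(q)$ by taking a commutator against the transvection constant, which serves as a probe detecting the vanishing of the ring-theoretic product.

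Next I would set $w(x)\coloneqq v(y(x))\in\PSL_n(q)\ast\gensubgrp{x}$. Since $\norm{y(x)}=4$ and $\norm{v}=O(q)$, we obtain $\norm{w}=O(q)$. That $w$ is a mixed identity for $\PSL_n(q)$ then follows from the two-part decomposition of the action: on $\ker(\phi)\cap\ker(\phi\circ x^{-1})$, both $c$ and $y(x)$ act as the identity, so the evaluation is trivial; on $V(x)$, the evaluation is trivial because $v$ is a mixed identity for $\SL(V(x))\cong\SL_2(q)$ and the constant $c$ restricts to a (non-trivial) transvection there, matching the hypothesis on the constant of $v$.

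The main obstacle is the auxiliary claim: obtaining length $O(q)$ rather than the naive $O(q^3)$ that arises from a single power $y^{\mathrm{exp}(\SL_2(q))}$. This requires exploiting the Cayley--Hamilton identity together with the Frobenius collapse $\tr(g)^q=\tr(g)$ to produce a single length-$O(q)$ polynomial relation covering all conjugacy types simultaneously, and then turning this ring-theoretic relation into a group-theoretic one via commutators with the transvection constant without additional length blow-up.
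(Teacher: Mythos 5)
Your plan shares the key ingredients of the actual proof (a transvection as constant, the fact that every $g\in\SL_2(q)$ satisfies one of $g^p=\pm 1$, $g^{q-1}=1$, $g^{q+1}=1$ -- your Cayley--Hamilton relation is this same fact in ring form -- and restriction to a moving $2$-dimensional subspace), but as written it has two genuine gaps. The first is the auxiliary claim itself: you never construct a one-variable mixed identity of length $O(q)$ for $\SL_2(q)$ whose sole constant is a transvection, and the proposed conversion of the ring identity $(g^q-g)(g^{q+1}-1)=0$ into a group word of length $O(q)$ ``via commutators with the transvection probe'' is precisely the step you defer; it is the heart of the construction, not a routine reduction. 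The paper sidesteps this entirely by using the constant-free two-variable law $v(x,y)=[[[x,y^p],y^{-(q-1)}],y^{q+1}]$ for $\SL_2(q)$, which already has length $O(q)$, and only afterwards substituting the transvection $k$ and its conjugate $k^x$ for the variables.

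The second gap is in the evaluation argument. Writing $c=1+u\phi$, the subspaces $U=\ker\phi\cap\ker(\phi\circ x^{-1})$ and $V(x)=\langle u,xu\rangle$ need not span $V$: if $\phi(xu)=0$ then $xu\in U$, so $U+V(x)\neq V$, and moreover $c$ then restricts to the identity on $V(x)$, so your hypothesis that the constant restricts to a non-trivial transvection fails as well; if $xu\in\langle u\rangle$ (for instance for any $x$ centralizing $c$), $V(x)$ is not even a plane. Hence ``trivial on $U$ and trivial on $V(x)$'' does not give $w(g)=1$ on all of $V$ for such $g$, and these degenerate cases would need a separate argument. The paper's proof is arranged exactly to avoid this: it shows only that $v(k,k^g)$ and $v(k^g,k)$ act trivially on $U$ and on the quotient $V/U$ -- a flag statement valid for every $g$, with no genericity assumption -- and then uses that all elements trivial on $U$ and on $V/U$ form an abelian group of unipotents, so that the extra outer commutator $[v(k,k^x),v(k^x,k)]$ is what turns the word into an identity. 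To salvage your route you would have to either handle the degenerate $x$ separately or replace the direct-sum decomposition by this flag-plus-commutator trick, in addition to actually carrying out the $\SL_2(q)$ construction you postpone.
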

	\begin{proof}
		
		For $h$ a rank-one matrix that squares to $0_V$, set $k\coloneqq 1_V+h\in\SL_n(q)$, where $V\cong\finfield_q^n$ is the natural module of $\SL_n(q)$. Then $k$ fixes the hyperplane $H\coloneqq\ker(h)$ pointwise. 
		
		Now note that $v(x,y)=[[[x,y^p],y^{-(q-1)}],y^{q+1}]\in\freegrp_2=\gensubgrp{x,y}$ is a law of length $O(q)$ in the variables $x,y$ for $\SL_2(q)$. This is, since any element $g\in\SL_2(q)$ satisfies either $g^{q-1}=\id$ if it is diagonalizable, $g^{q+1}=\id$ if it has no eigenvectors, or $g^p=\pm\id$ if it is a plus or minus a unipotent.  Let $g\in\SL_n(q)$ be arbitrary and consider the elements $k$ and $k^g$. Now, $k$ and $k^g$ fix the codimension-two subspace $U\coloneqq H\cap H.g$ and can be written in the form
		$$
		k\text{ resp. } k^g=
		\begin{pmatrix}
			\ast & \ast\\
			0 & 1_U
		\end{pmatrix}.
		$$
		Hence if we consider the matrices $v(k,k^g)$ and $v(k^g,k)$, we get
		$$
		v(k,k^g)  \text{ resp. } v(k^g,k)=\begin{pmatrix}
			1_W & \ast\\
			0 & 1_U
		\end{pmatrix}.
		$$
		where $W$ is a complement of $U$ in $V$. Hence both lie in an abelian subgroup of unipotent elements and $[v(k,k^g),v(k^g,k)]$ is trivial for all choices of $g$. Thus $w=[v(k,k^x),v(k^x,k)]\in\SL_n(q)\ast\gensubgrp{x}$ is a mixed identity for $\SL_n(q)$ and hence descends to a mixed identity for $\PSL_n(q)$ of length $O(q)$\label{result:bd_ids_psl_nq}.
	\end{proof}
	
	This shows that any almost simple group with socle $\PSL_n(q)$ has a shortest mixed identity of length $O(q)$ by Lemma~\ref{lem:nor_subgrp}.
	
	Now we prove the lower bound in Theorem~\ref{thm:psl}. The idea of the proof which we present stems from \cite{golubchikmikhalev1982generalized}. For the sake of clarity, let us first focus on the case where $n=2$ and no field automorphisms are involved, i.e.\ $f=e$. Write $\overline{\bullet}\colon\GL_2(q)\to\PGL_2(q)$ for the natural map. Let 
	$$
	w=c_0 x^{\varepsilon(1)} c_1 \cdots c_{l-1} x^{\varepsilon(l)} c_l\in\GL_2(q)\ast\gensubgrp{x}
	$$ 
	have only one variable $x$, as we may assume by Lemma~\ref{lem:red_one_var_cs}. For $c\in\GL_2(q)$ write $\fix(\overline{c})$ for the set of fixed points $p\in\proj(V)$ of $\overline{c}$. Here $\proj(V)$ denotes the projective line obtained from $V\cong\finfield_q^2$. We have the following:
	
	\begin{lemma}\label{lem:ex_good_rk_one_op}
		The following are equivalent:
		\begin{enumerate}[\normalfont(i)]
			\item $\bigcup_{j=1}^{l-1}{\fix(\overline{c}_j)}\neq\proj(V)$;
			\item There exists a linear operator $h\colon V\to V$ of rank one such that (a) $h^2=0_V$; and (b) $hc_jh\neq 0_V$ for all $j=1,\ldots,l-1$.
			\item There exists a linear operator $h\colon V\to V$ of rank one such that (a) $h^2=0_V$; and (b') $h c_1 h\cdots h c_{l-1} h\neq 0_V$.
		\end{enumerate}
	\end{lemma}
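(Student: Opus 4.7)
The plan is to parametrise the rank-one nilpotent operators on the two-dimensional space $V$ and then observe that the relevant products collapse to scalar multiples of $h$. In dimension two, a rank-one operator $h$ satisfying $h^2=0_V$ has $\im h \subseteq \ker h$, and since both subspaces have dimension one they must coincide in a single line $\ell\in\proj(V)$. Concretely, write $h=v\otimes\phi$, meaning $h(u)=\phi(u)\cdot v$, where $v\in V$ spans $\ell$ and $\phi\in V^*$ is a non-zero linear form vanishing on $\ell$ (so $\ker\phi=\ell$ as well). This sets up a bijection between lines $\ell\in\proj(V)$ and rank-one operators $h$ with $h^2=0_V$, up to a non-zero scalar.

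For the equivalence (i)$\Leftrightarrow$(ii), I would compute $h c_j h$ directly. Using $h=v\otimes\phi$ one finds
\[
(h c_j h)(u)=h\bigl(c_j(\phi(u)v)\bigr)=\phi(u)\phi(c_j v)\cdot v,
\]
so that $h c_j h=\phi(c_j v)\cdot h$. Consequently $h c_j h\neq 0_V$ if and only if $\phi(c_j v)\neq 0$, equivalently $c_j v\notin\ker\phi=\ell$, equivalently $\overline{c}_j$ does not fix the point $\ell\in\proj(V)$. Therefore an $h$ as in (ii) exists precisely when there is a line $\ell\in\proj(V)$ with $\ell\notin\fix(\overline{c}_j)$ for every $j=1,\ldots,l-1$, which is the statement of (i).

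For (ii)$\Leftrightarrow$(iii), the implication (iii)$\Rightarrow$(ii) is trivial: if any intermediate factor $hc_jh$ vanished, the whole product would vanish. For the converse, I would iterate the identity $h c_j h=\phi(c_j v)\cdot h$ to obtain, by a straightforward induction on $l$,
\[
h c_1 h c_2 h\cdots h c_{l-1} h=\left(\prod_{j=1}^{l-1}\phi(c_j v)\right)\cdot h.
\]
Since $h\neq 0_V$, this product is non-zero if and only if each scalar $\phi(c_j v)$ is non-zero, i.e. if and only if each $h c_j h$ is non-zero. Hence (ii) implies (iii).

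There is no serious obstacle here; the entire lemma is a direct linear-algebraic computation whose only real content is the observation that in dimension two a rank-one nilpotent operator is determined (up to scalar) by a single line, so products of the form $h c h$ automatically reduce to a scalar multiple of $h$ and therefore telescope.
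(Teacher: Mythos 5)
Your proof is correct and follows essentially the same route as the paper: both identify $\im(h)=\ker(h)$ with a line of $\proj(V)$ not fixed by any $\overline{c}_j$, and your scalar identity $hc_jh=\phi(c_jv)\,h$ is exactly the telescoping fact the paper relies on (there it appears as $hc_1h\cdots hc_{l-1}h=\beta h$ with $\beta\neq 0$). The only cosmetic difference is which cycle of implications is closed — you do (i)$\Leftrightarrow$(ii) and (ii)$\Leftrightarrow$(iii), the paper does (i)$\Rightarrow$(iii)$\Rightarrow$(ii)$\Rightarrow$(i) — which changes nothing of substance.
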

	
	\begin{proof}
		(i)$\Rightarrow$(iii): Given $v\in V$ such that $\gensubsp{v}$ is not a fixed point of any of the $\overline{c}_j$ ($j=1,\ldots,l-1$) extend it to a basis $B$ of $V$ and define $h$ by $b\mapsto v$ for $b\in B\setminus\set{v}$ and $v\mapsto 0$.
		(iii)$\Rightarrow$(ii) is obvious. Now,	conversely, that is (ii)$\Rightarrow$(i), if (a) and (b) is satisfied, then $\im(h)=\ker(h)$ is a one-dimensional subspace of $V$ which cannot be fixed by any $\overline{c}_j$ ($j\in\set{1,\ldots,l-1}$), otherwise $h c_j h=0_V$.
	\end{proof}
	
	Note that, with respect to the basis $e_1=v,e_2=b$, the linear map $h$ has the matrix
	$$
	h=
	\begin{pmatrix}
		0 & 0\\
		1 & 0
	\end{pmatrix}.
	$$
	
	\begin{lemma}\label{lem:poly_mthd}
		Assume the condition in Lemma~\ref{lem:ex_good_rk_one_op} is satisfied. If $0<l<q$, then $\overline{w}$ is non-constant on $\PSL_2(q)$.
	\end{lemma}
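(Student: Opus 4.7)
The plan is to apply a polynomial-method argument in the spirit of Golubchik--Mikhalev. Let $h$ be a rank-one operator as furnished by Lemma~\ref{lem:ex_good_rk_one_op}(iii), and for $t\in\finfield_q$ set $u(t)\coloneqq 1_V+th\in\SL_2(q)$. Since $h^2=0_V$, we have $u(t)^{-1}=1_V-th$, so $u(t)^{\varepsilon(j)}=1_V+\varepsilon(j)th$. Substituting $x=u(t)$ into $w$ and expanding turns $w(u(t))$ into a matrix-valued polynomial in $t$ of degree at most $l$. Reading off the coefficient of $t^l$ by selecting the $\varepsilon(j)th$ summand from each factor $u(t)^{\varepsilon(j)}$, we obtain
$$
A_l=\varepsilon(1)\cdots\varepsilon(l)\cdot c_0hc_1h\cdots hc_{l-1}hc_l,
$$
which is non-zero by condition (iii)(b'), and which has rank exactly one since it factors through the rank-one operator $h$ while the flanking factors $c_0,c_l$ are invertible.

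Next, suppose for contradiction that $\overline{w}$ is constant on $\PSL_2(q)$. Then $w(u(t))$ is a scalar multiple of $w(1)$ for every $t\in\finfield_q$, say $w(u(t))=\mu(t)w(1)$ with $\mu(t)\in\finfield_q^\times$. Since $\det u(t)=1$, comparing determinants forces $\mu(t)^2=1$, so $\mu(t)\in\{\pm 1\}$. Now consider the matrix polynomial $G(t)\coloneqq w(u(t))w(1)^{-1}$, which has degree at most $l$ and takes only the values $\pm I$ on $\finfield_q$. Using $l<q$, the two off-diagonal entries of $G(t)$ are polynomials of degree $\le l$ vanishing at the $q$ points of $\finfield_q$, so they are identically zero; the difference of the two diagonal entries is likewise identically zero. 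Hence $G(t)=a(t)I$ for a scalar polynomial $a$ of degree $\le l$, that is, $w(u(t))=a(t)w(1)$ as matrix polynomials.

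Comparing coefficients of $t^l$ then yields $A_l=a_lw(1)$, where $a_l$ is the leading coefficient of $a$. But $w(1)=c_0c_1\cdots c_l$ lies in $\GL_2(q)$, so $a_lw(1)$ has rank $0$ or $2$, whereas $A_l$ has rank one, giving the desired contradiction. The step that requires the most care is the rank analysis of $A_l$: one must confirm that (iii)(b') yields rank \emph{exactly} one, and that the invertibility of the flanking constants $c_0,c_l$ does not inflate the rank. The determinant reduction $\mu(t)\in\{\pm 1\}$ is also essential, since without trimming $\mu$ to a finite set one could not argue that the entries of $G$ match a fixed polynomial on all of $\finfield_q$, and the degree-versus-roots comparison would fail.
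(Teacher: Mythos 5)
Your argument is correct, and its core coincides with the paper's: you substitute the unipotent $1_V+th$ built from the rank-one operator $h$ of Lemma~\ref{lem:ex_good_rk_one_op}, expand $w$ as a matrix polynomial of degree $l$ in $t$, and exploit that the leading coefficient is a non-zero rank-one matrix while the degree is $<q$. Where you diverge is the endgame: the paper passes to $w'=x^{\varepsilon(1)}c_1\cdots c_{l-1}x^{\varepsilon(l)}$ (so that the constant term $c_1\cdots c_{l-1}$ is invertible and the top coefficient is a multiple of $h$) and then quotes the prepared auxiliary Lemma~\ref{lem:poly_non_const}, which says a vector-valued polynomial of degree $l<q$ with linearly independent coefficients cannot have image inside a line of $\End(V)$; you instead keep the outer constants and argue directly: constancy of $\overline{w}$ forces $G(t)=w(u(t))w(1_V)^{-1}$ to be scalar at every $t\in\finfield_q$, whence (degree $\le l<q$ versus $q$ roots) $G(t)=a(t)1_V$ identically, and comparing top coefficients pits the rank-one matrix $A_l$ against $a_l\,w(1_V)$ of rank $0$ or $2$. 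Your route is thus a self-contained re-proof of the relevant special case of Lemma~\ref{lem:poly_non_const}, no better or worse, just not modular. One small correction to your own commentary: the determinant step showing $\mu(t)\in\{\pm1\}$ is not needed, let alone essential. Even with $\mu(t)\in\finfield_q^\times$ arbitrary, the matrix $G(t)=\mu(t)1_V$ has vanishing off-diagonal entries and equal diagonal entries at every $t\in\finfield_q$, which is all your degree-versus-roots argument uses; the finiteness of the value set of $\mu$ plays no role.
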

	
	Before we prove Lemma~\ref{lem:poly_mthd}, we need an auxiliary fact:
	
	\begin{lemma}\label{lem:poly_non_const}
		Let $V$ be a finite-dimensional $\finfield_q$-vector space. Assume $0<l<q$ and consider the map $v\colon \finfield_q\to V$ given by $v(\lambda)=v_0+\lambda v_1+\cdots+\lambda^l v_l$ for $v_0,\ldots,v_l\in V$, $v_l\neq 0$ and one of $v_0,\ldots,v_{l-1}$ linearly independent from $v_l$. Then $\im(v)$ is not contained in a one-dimensional subspace of $V$.
	\end{lemma}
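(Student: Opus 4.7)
The plan is to argue by contradiction. Suppose $\im(v)\subseteq W$ for some one-dimensional subspace $W\leq V$; I aim to show this forces all coefficients $v_0,\ldots,v_l$ into $W$, which together with $v_l\neq 0$ and the linear independence hypothesis yields a contradiction.

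The key tool is the interaction between linear functionals and the fact that a nonzero polynomial over $\finfield_q$ of degree at most $l<q$ has at most $l$ roots, hence cannot vanish on all of $\finfield_q$. Concretely, I would fix an arbitrary linear functional $\phi\colon V\to\finfield_q$ with $W\subseteq\ker(\phi)$. Composing, one obtains
\[
(\phi\circ v)(\lambda)=\phi(v_0)+\lambda\,\phi(v_1)+\cdots+\lambda^l\,\phi(v_l),
\]
a polynomial in $\lambda$ of degree at most $l$. By assumption it vanishes on every $\lambda\in\finfield_q$, and since $q>l$ it must be the zero polynomial. Hence $\phi(v_i)=0$ for every $i=0,\ldots,l$.

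Since this holds for every $\phi$ in the annihilator of $W$, and since $V$ is finite-dimensional so that $W=\bigcap_{\phi\in W^\circ}\ker(\phi)$, I conclude that $v_i\in W$ for all $i=0,\ldots,l$. In particular $v_l\in W$, and because $v_l\neq 0$ and $\dim W=1$, we must have $W=\gensubsp{v_l}$. But the hypothesis says that some $v_j$ with $j\in\{0,\ldots,l-1\}$ is linearly independent from $v_l$, so this $v_j$ cannot lie in $\gensubsp{v_l}=W$, contradicting $v_j\in W$.

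There is essentially no hard step: the argument is a clean instance of the polynomial method, and the only care needed is the translation from the geometric condition $\im(v)\subseteq W$ into polynomial identities via dual vectors, which is handled by the standard fact that $W$ is the intersection of the kernels of the functionals in its annihilator. The hypothesis $l<q$ is used exactly once, to rule out a nonzero polynomial of degree $\leq l$ vanishing on all of $\finfield_q$.
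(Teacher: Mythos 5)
Your proof is correct, and it takes a mildly different route from the paper's. The paper argues directly: it picks a basis of $V$ whose first two vectors are $v_l$ and some $v_j$ linearly independent from it, writes $v(\lambda)$ in coordinates as $p_0(\lambda)v_0'+p_1(\lambda)v_1'+\cdots$, fixes $\mu$ with $p_1(\mu)\neq 0$, and notes that $p_0(\lambda)p_1(\mu)-p_0(\mu)p_1(\lambda)$ is a nonzero polynomial of degree $l<q$ in $\lambda$, so $v(\mu)$ and $v(\lambda)$ cannot be proportional for all $\lambda$; this exhibits two values of $v$ not lying on a common line. You instead argue by contradiction via duality: every functional $\phi$ annihilating the putative line $W$ kills all values of $v$, hence (since a nonzero polynomial of degree at most $l<q$ cannot vanish on all of $\finfield_q$) kills every coefficient $v_i$, so $v_i\in W$ for all $i$; with $v_l\neq 0$ this forces $W=\gensubsp{v_l}$, contradicting the hypothesis that some $v_j$, $j\le l-1$, is independent of $v_l$. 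Both arguments hinge on exactly the same root-counting fact and use $l<q$ in the same place. Yours is basis-free and in effect proves the slightly stronger statement that for $l<q$ the span of $\im(v)$ contains all the coefficients $v_0,\dots,v_l$ (equivalently, the coefficients are recoverable from the values by interpolation), whereas the paper's version is more explicit in producing a concrete pair of non-proportional values; either conclusion suffices for the application in Lemma~\ref{lem:poly_mthd}.
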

	
	\begin{proof}
		Let $v_0',\ldots,v_n'$ be a basis of $V$. Rewrite $v$ as $v(\lambda)=p_0(\lambda)v_0'+p_1(\lambda)v_1'+\cdots+p_n(\lambda)v_n'$ for polynomials $p_i\in\finfield_q[X]$ ($i=0,\ldots,n$), where $v_0'\coloneqq v_l$ and $v_1'\coloneqq v_j$, where $j$ is chosen such that $v_l$ and $v_j$ are linearly independent ($j\in\set{0,\ldots,l-1}$). Then $p_0$ has degree $l$ and its coefficient of $\lambda^l$ is one and the coefficient of $\lambda^j$ is zero. Similarly, the coefficient in $p_1$ of $\lambda^j$ is one and the coefficient of $\lambda^l$ is zero.
		Choose $\mu\in\finfield_q$ such that $p_1(\mu)\neq 0$, which is possible, since $p_1$ is of degree less than $q$ and non-zero. Then $p_0(\lambda)p_1(\mu)=p_0(\mu)p_1(\lambda)$ cannot hold for all $\lambda$, since $p_0(\lambda)p_1(\mu)-p_0(\mu)p_1(\lambda)$ is a non-zero polynomial of degree $l<q$ in $\lambda$. Hence $v(\mu)$ cannot be a multiple of $v(\lambda)$. The proof is complete.
	\end{proof}
	
	\begin{proof}[Proof of Lemma~\ref{lem:poly_mthd}]
		The word $\overline{w}$ is constant if and only if $\overline{w}'$ is constant, where $w'=x^{\varepsilon(1)} c_1 \cdots c_{l-1} x^{\varepsilon(l)}$. Let $h$ be as in Lemma~\ref{lem:ex_good_rk_one_op}, then we can plug in $k(\lambda)=1_V+\lambda h$ into $w'$.
		%This is legal since 
		%$$
		%k(\lambda)=1_V+\lambda h=
		%\begin{pmatrix}
		%	1 & 0\\
		%	\lambda & 1
		%\end{pmatrix}
		%$$
		%has determinant one. 
		Note that $k(\lambda)^{-1}=(1_V+\lambda h)^{-1}=1_V-\lambda h=k(-\lambda)$ as $h^2=0$. Thus one obtains
		$$
		w'(k(\lambda))=w'(1_V+\lambda h)=p_0+\lambda p_1+\cdots+\lambda^l p_l,
		$$
		where $p_0,\ldots,p_l\in\End(V)$, $p_0=c_1\cdots c_{l-1}$, and $p_l=\pm hc_1 h\cdots h c_{l-1}h=\beta h$ for some $\beta\in\finfield_q^\times$. Thus $p_0$ and $p_l$ are linearly independent as the former is invertible and the latter has rank one. Hence we may apply Lemma~\ref{lem:poly_non_const} to deduce that the image of $\lambda\mapsto w'(k(\lambda))=w'(1_V+\lambda h)$ is not contained in a one-dimensional subspace of $\End(V)=\M_2(q)$, so $\overline{w}'$ is non-constant. The proof is finished.
	\end{proof}
	
	We finish the proof of the lower bound for $\PSL_2(q)$ in Theorem~\ref{thm:psl} by proving the following lemma.
	
	\begin{lemma}\label{lem:psl_2_fnl_lem}
		Let $w\in\GL_2(q)\ast\gensubgrp{x}$ be of length $0<l\leq\frac{q}{2}+1$ such that $\overline{w}\in\PGL_2(q)\ast\gensubgrp{x}$ is of positive length. Then $\overline{w}$ is non-constant on $\PSL_2(q)$.
	\end{lemma}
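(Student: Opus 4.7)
The plan is to run the polynomial method of Lemma~\ref{lem:poly_mthd}, adapted to allow exponents $\eta_i\in\ints$ (rather than $\pm 1$) on the variable, after a preliminary reduction modulo scalars. First I would project $w$ to $\overline w\in\PGL_2(q)\ast\gensubgrp{x}$ and freely reduce, obtaining
\[
w^{\ast}=d_0 x^{\eta_1}d_1\cdots x^{\eta_m}d_m
\]
with $m\geq 1$ by the positive-length hypothesis on $\overline w$, all intermediate constants $d_1,\ldots,d_{m-1}\in\PGL_2(q)\setminus\trivgrp$, and $\eta_i\in\ints\setminus\{0\}$. Each letter $x^{\pm 1}$ of $w$ contributes at most $\pm 1$ to some $\eta_i$, so $\sum_i|\eta_i|\leq l$, and in particular $m\leq l$. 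Since $\overline w$ and $w^{\ast}$ induce the same map on $\PSL_2(q)$, it suffices to show that $w^{\ast}$ is non-constant there.

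Each intermediate $d_j$ fixes at most two points of $\proj(V)$, so the fixed sets jointly cover at most $2(m-1)\leq 2l-2\leq q$ of the $q+1$ points of the projective line. Using Lemma~\ref{lem:ex_good_rk_one_op} I pick a line $v\in\proj(V)$ not fixed by any $d_j$ and the rank-one $h$ with $\im(h)=\ker(h)=v$ and $h^2=0$. Fixing arbitrary lifts $\tilde d_j\in\GL_2(q)$ and substituting $k(\lambda)=1_V+\lambda h\in\SL_2(q)$ into the lifted word $\tilde w^{\ast}$, and using $(1+\lambda h)^n=1+n\lambda h$ for every $n\in\ints$ (which holds because $h^2=0$), I obtain
\[
\tilde w^{\ast}(k(\lambda))=\tilde d_0(1+\eta_1\lambda h)\tilde d_1\cdots(1+\eta_m\lambda h)\tilde d_m,
\]
a matrix-valued polynomial in $\lambda$ of degree $\leq m\leq q/2+1<q$. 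Its constant term is the invertible $A=\tilde d_0\cdots\tilde d_m\in\GL_2(q)$, and in the generic case $p\nmid\eta_i$ for all $i$ its leading $\lambda^m$ coefficient is $(\prod_i\eta_i)\,\tilde d_0 h\tilde d_1 h\cdots h\tilde d_m$, a non-zero rank-one matrix by the choice of $v$. An invertible and a non-zero rank-one element of $\M_2(q)\cong\finfield_q^4$ are linearly independent, so Lemma~\ref{lem:poly_non_const} (applied with target $V=\M_2(q)$ and degree $<q$) gives that $\tilde w^{\ast}(k(\lambda))$ is not always a scalar multiple of $A$, whence $\overline w$ is non-constant on $\PSL_2(q)$.

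The main subtlety is the case where $p\mid\eta_i$ for some $i$, which can arise once $l\geq p$: the factor $(1+\eta_i\lambda h)$ is then identically $1$ in $\finfield_q[\lambda]$ and the leading $\lambda^m$ coefficient vanishes. One instead examines the maximal-degree surviving coefficient, obtained by inserting $h$ only at positions $i$ with $p\nmid\eta_i$; its non-vanishing requires $v$ to avoid the fixed points of the products of consecutive intermediate constants running between those positions. The bound $l\leq q/2+1$ is calibrated so that the resulting at most $2(l-1)\leq q$ forbidden lines still leave a suitable $v\in\proj(V)$, and the rest of the argument proceeds as above.
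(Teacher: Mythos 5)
Your generic case (all $\eta_i$ prime to $p$) is essentially the paper's argument: the paper also substitutes $k(\lambda)=1_V+\lambda h$ with $h$ rank-one, $h^2=0$, chosen via the fixed-point count of Lemma~\ref{lem:ex_good_rk_one_op}, and then invokes Lemma~\ref{lem:poly_non_const} with the invertible constant term and rank-one top coefficient. The genuine gap is in your last paragraph, i.e.\ in how you handle syllables $x^{\eta_i}$ with $p\mid\eta_i$ (which do occur under the hypothesis $l\leq q/2+1$ once $q$ is a proper power of $p$, e.g.\ $q=p^e$ with $e$ large). If $p\mid\eta_i$ then $(1_V+\lambda h)^{\eta_i}=1_V$ identically, and your proposed patch --- keep only the positions with $p\nmid\eta_i$ and choose $v$ avoiding the fixed points of the merged products of intermediate constants --- does not work: the merged product can be scalar, in which case $h\,(\text{scalar})\,h=0$ for \emph{every} $v$, and the collapse can cascade. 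Concretely, take $w=x^p c x^{-p}c^{-1}$ with $c\in\GL_2(q)$ non-central and $2p\leq q/2+1$ (e.g.\ $p=3$, $q=27$). This satisfies all hypotheses of the lemma, and $\overline{w}$ is indeed non-constant on $\PSL_2(q)$, but $w(1_V+\lambda h)=1_V$ for every rank-one nilpotent $h$ and every $\lambda$, because $k(\lambda)^p=1_V$. So no choice of $v$, and no analysis of ``surviving'' coefficients, can rescue the unipotent substitution applied directly to your $\PGL_2$-reduced normal form: the method, not just its leading-coefficient bookkeeping, is blind to such words.

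The paper's proof avoids this by a different preliminary reduction. Instead of freely reducing over $\PGL_2(q)$ and allowing exponents $\eta_i$ of large absolute value, it keeps all exponents equal to $\pm1$ and first performs the substitution $x\mapsto g_{-1}xg_{1}$ of Lemma~\ref{lem:red_one_var_cs} (with ``trivial'' replaced by ``central'' in the counting): critical constants are already non-central after reducing $\overline{w}$, and for the non-critical positions a counting argument over choices of $g_{\pm1}\in\PSL_2(q)$ makes every intermediate constant non-central. Since the substitution is by fixed group elements, constancy on $\PSL_2(q)$ is unaffected, and now the evaluated word is a product of factors $(1_V\pm\lambda h)$ separated by non-central constants --- pure powers of the unipotent never occur, the top coefficient is $\pm hc_1'h\cdots hc_{l-1}'h\neq 0$ with no integer coefficients that could vanish mod $p$, and Lemma~\ref{lem:poly_non_const} applies. (In the example above, the substitution breaks up the $p$-th powers: $(g_{-1}k(\lambda)g_1)^p\neq 1_V$ in general.) You would need to incorporate this conjugation/substitution step, or some other device that prevents consecutive unipotent factors from merging into a $p$-th power, before your argument goes through; note also that the paper treats $q=2$ separately via Lemma~\ref{lem:no_shrt_ids}, a case your degree bound $m<q$ does not cover.
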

	
	\begin{proof}
		As before, we apply Lemma~\ref{lem:red_one_var_cs} to get that all intermediate constants $c_j$ ($j=1,\ldots,l-1$) are non-central, i.e.\ not equal to $\lambda 1_V$ for some $\lambda\in\finfield_q^\times$. Then we can pass from 
		$$
		w=c_0 x^{\varepsilon(1)} c_1 \cdots c_{l-1} x^{\varepsilon(l)} c_l \quad \mbox{to} \quad 
		w'=x^{\varepsilon(1)} c_1 \cdots c_{l-1} x^{\varepsilon(l)}.
		$$ 
		The condition in Lemma~\ref{lem:ex_good_rk_one_op} is satisfied since each $c_j$ ($j=1,\ldots,l-1$) is non-central, so $\overline{c}_j$ has at most two fixed points. Thus 
		$$
		\card{\bigcup_{j=1}^{l-1}{\fix(\overline{c}_j)}}\leq2(l-1)\leq q<\card{\proj(V)}=q+1.
		$$ 
		Also $0<l<q$ for $q>2$, so the condition in Lemma~\ref{lem:poly_mthd} is satisfied and $\overline{w}$ is non-constant. For $q=2$ we can apply Lemma~\ref{lem:no_shrt_ids}. The proof is complete.
	\end{proof}
	
	Finally, note that the above proof for $\PSL_2(q)$ can be adapted for $\PSL_n(q)$ for $n\geq 3$:
	
	\begin{lemma}\label{lem:psl_n_fnl_lem}
		Let $n\geq 3$ and $w\in\GL_n(q)\ast\gensubgrp{x}$ be of length $0<l\leq q-1$ such that $\overline{w}\in\PGL_n(q)\ast\gensubgrp{x}$ is of positive length. Then $\overline{w}$ is non-constant on $\PSL_n(q)$.
	\end{lemma}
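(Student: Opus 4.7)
My plan is to mimic the proof of Lemma~\ref{lem:psl_2_fnl_lem}, generalising the construction of the rank-one nilpotent operator $h$ to dimension $n \geq 3$. By Lemma~\ref{lem:red_one_var_cs}, I first reduce to the case where every intermediate constant $c_j$ ($j = 1, \ldots, l-1$) is non-central in $\GL_n(q)$. As before, it then suffices to show that $w' = x^{\varepsilon(1)} c_1 \cdots c_{l-1} x^{\varepsilon(l)}$, evaluated along a well-chosen unipotent one-parameter family $k(\lambda) \in \SL_n(q)$, yields a function $\overline{w'(k(\lambda))}$ of $\lambda \in \finfield_q$ that is non-constant in $\PGL_n(q)$.

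The core of the argument is to produce a rank-one $h \in \End(V)$ with $h^2 = 0_V$ and $h c_j h \neq 0_V$ for every $j$, in analogy with Lemma~\ref{lem:ex_good_rk_one_op}. Writing $h = v\phi^T$ for $v \in V \setminus \{0\}$, $\phi \in V^* \setminus \{0\}$ with $\phi(v) = 0$, this amounts to finding $(v, \phi)$ with $\phi(c_j v) \neq 0$ for all $j$. I choose $v$ and $\phi$ in that order by a two-step counting argument. First, since each non-scalar $c_j$ has at most $q^{n-1} + q - 2$ eigenvectors in $V$ (the extremal case being a codimension-one eigenspace together with a one-dimensional eigenspace for a different eigenvalue), a union bound with $l \leq q-1$ and $n \geq 3$ produces a $v$ that is not an eigenvector of any $c_j$. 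For such a $v$, the vectors $v$ and $c_j v$ are linearly independent for each $j$, so $\phi(v) = \phi(c_j v) = 0$ cuts out a codimension-two subspace of $V^*$ of size $q^{n-2}$. A second union bound then yields a suitable $\phi \in V^* \setminus \{0\}$, again using $l \leq q-1$.

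Having constructed $h$, I set $k(\lambda) \coloneqq 1_V + \lambda h$, which is unipotent, hence lies in $\SL_n(q)$, and satisfies $k(\lambda)^{-1} = k(-\lambda)$ since $h^2 = 0$. Expansion of the substitution into $w'$ gives
$$
w'(k(\lambda)) = p_0 + \lambda p_1 + \cdots + \lambda^l p_l \in \End(V) = \M_n(q),
$$
with $p_0 = c_1 \cdots c_{l-1}$ of full rank and $p_l = \pm h c_1 h \cdots h c_{l-1} h = \beta h$ for some $\beta \in \finfield_q^\times$, by the construction of $h$. In particular $p_0$ and $p_l$ are linearly independent (rank $n$ versus rank $1$). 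Since $0 < l < q$, Lemma~\ref{lem:poly_non_const} applied with ambient space $\M_n(q)$ shows that the image of $\lambda \mapsto w'(k(\lambda))$ is not contained in a one-dimensional subspace, so $\overline{w'(k(\lambda))}$ takes at least two distinct values in $\PGL_n(q)$. As every $k(\lambda) \in \SL_n(q)$, this yields that $\overline{w}$ is non-constant on $\PSL_n(q)$.

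The main obstacle, absent in the $n = 2$ case, is that a non-central element of $\GL_n(q)$ can have enormously many projective fixed points---up to an entire hyperplane of $\proj(V)$. The direct analogue of Lemma~\ref{lem:ex_good_rk_one_op}(i), based on the union of the sets $\fix(\overline{c}_j)$ not exhausting $\proj(V)$, is therefore hopelessly weak for $n \geq 3$. The remedy is to count eigenvectors in $V$ rather than projective fixed points in $\proj(V)$, and to exploit the extra room provided by $n \geq 3$ in both counting steps above.
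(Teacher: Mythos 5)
Your proposal is correct and follows essentially the same route as the paper: a two-step counting argument (first a $v$ avoiding all eigenvectors of the non-central $c_j$, then a functional/hyperplane through $v$ missing the vectors $c_j v$) to build a rank-one $h$ with $h^2=0$ and $hc_jh\neq 0$, followed by substituting $k(\lambda)=1_V+\lambda h$ and invoking Lemma~\ref{lem:poly_non_const} with $p_0=c_1\cdots c_{l-1}$ invertible and $p_l$ a nonzero multiple of $h$. The only cosmetic difference is that you count eigenvectors in $V$ and functionals in $V^*$ where the paper counts projective fixed points and hyperplanes, which amounts to the same estimates.
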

	
	\begin{proof}
		Let           
		$
		w'=x^{\varepsilon(1)} c_1 \cdots c_{l-1} x^{\varepsilon(l)}\in\GL_n(q)\ast\gensubgrp{x},
		$ 
		all $c_j$ non-central ($j=1,\ldots,l-1$), $n\geq 3$, and $l\leq q-1$. Moreover, $V\cong \finfield_q^n$. Then we need to find $h\in\End(V)$ such that $h^2=0$ and there is $H\leq V$ a hyperplane such that $\ker(h)=H$ and $\im(h)=\gensubsp{v}\leq H$ and $hc_jh\neq 0$ for all $j=1,\ldots,l-1$. This means $v.c_j\notin H$. Then $\gensubsp{v}\notin\fix(\overline{c}_j)$ for all $j=1,\ldots,l-1$. As each $c_j$ is non-central, $\overline{c}_j$ has at most $f=\frac{q^{n-1}-1}{q-1}+1$ fixed points in $\proj(V)$ (coming from the eigenspaces of $c_j$). These points are excluded for the choice of $\gensubsp{v}$, but 
		$$
		f(l-1)<f(q-1)=q^{n-1}-1+q-1<\card{\proj(V)}=\frac{q^n-1}{q-1}=q^{n-1}+\cdots+q+1
		$$ 
		(as $n\geq 3$), so we can choose $\gensubsp{v}$ to be a non-fixed point of all $\overline{c}_j$ ($j=1,\ldots,l-1$). Now we have to choose $H\leq V$ a hyperplane such that $v.c_j\notin H$ for all $j=1,\ldots,l-1$ and $v\in H$. This condition excludes $g=\frac{q^{n-2}-1}{q-1}$ hyperplanes containing $v$ and $v.c_j$. But 
		$$
		g(l-1)<g(q-1)=q^{n-2}-1<\frac{q^{n-1}-1}{q-1}=q^{n-2}+q^{n-3}+\cdots q+1
		$$ 
		and there are that many hyperplanes containing $v$. So we can choose a suitable hyperplane $H$. Now we have defined $h$ up to a scalar factor. %Moreover, one verifies that $k(\lambda)=1_V+\lambda h$ has determinant one: Choose a basis $e_1=v,e_2,\ldots,e_{n-1}$ of $H$ and extend it to a basis of $V$ by $e_n$. Then $h\colon e_i\mapsto 0$ for $i=1,\ldots,n-1$ and $e_n\mapsto e_1$. So $h$ has the matrix
		%$$
		%    h=
		%    \begin{pmatrix}
			%        0 & 0 & \cdots &0\\
			%        \vdots & \vdots & \iddots & \vdots\\
			%        0 & 0 & \cdots & 0\\
			%        1 & 0 & \cdots & 0
			%    \end{pmatrix}
		%$$
		%so that indeed $k(\lambda)=1_V+\lambda h$ has determinant one. 
		We can proceed as in the proof of Lemma~\ref{lem:poly_mthd} to see that $\lambda\mapsto\overline{w}'(\overline{k(\lambda)})=\overline{w}'(\overline{1_V+\lambda h})$ is non-constant. This shows that, when $\overline{w}$ is a mixed identity for $\PSL_n(q)$ with constants in $\PGL_n(q)$ and $n\geq3$, then we cannot have $\norm{w}<q$. This finishes the proof.
	\end{proof}
	
	We will now start to take field automorphisms into account and study the groups $\PSL_n(q)\rtimes\gensubgrp{\alpha\mapsto\alpha^{p^f}}$. Again, for simplicity, we start with the case $n=2.$
	
	\begin{lemma}
		The group $\PSL_2(q)\rtimes\gensubgrp{\alpha\mapsto\alpha^{p^f}}$ has a mixed identity of length $O(\frac{e}{f}p^f).$
	\end{lemma}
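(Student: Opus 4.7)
My plan is to adapt the construction of Lemma~\ref{lem:psl_up_bd} to $G = \PSL_2(q) \rtimes \gensubgrp{\phi}$, where $\phi = F^f$ has order $d := e/f$ in $G/\PSL_2(q)$. The idea is to replace the length-$O(q)$ law for $\PSL_2(q)$ by the length-$O(p^f)$ analogue for the $\phi$-fixed subfield subgroup $\PSL_2(p^f) \le \PSL_2(q)$, at the cost of a length-$O(d)$ Frobenius reduction of the variable $x$ into $\PSL_2(p^f)$.

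Concretely, the adapted base law
\[
v(X,Y) = \bigl[\bigl[[X, Y^p], Y^{-(p^f - 1)}\bigr], Y^{p^f + 1}\bigr]
\]
has length $O(p^f)$ and evaluates to $1_G$ for any $X \in G$ and any $Y \in \PSL_2(p^f)$: lifting $Y$ to $\SL_2(p^f)$, one of $Y^p$, $Y^{p^f-1}$, $Y^{p^f+1}$ is trivial (according to whether the lift is unipotent, diagonalizable over $\finfield_{p^f}$, or elliptic), so one of the nested commutators collapses to $[{-},1]$. I would then construct a word $y(x) \in G \ast \gensubgrp{x}$ of length $O(d)$ such that $y(x) \in \PSL_2(p^f)$ for every $x \in G$, and take $w(x) = v(a, y(x))$ for a suitable constant $a \in G$; this will yield a mixed identity of total length $O(d \cdot p^f) = O((e/f) p^f)$, as required.

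Since $G/\PSL_2(q)$ is cyclic, $[x,\phi]$ lies in $\PSL_2(q)$ for every $x \in G$, and the natural candidate for $y(x)$ is a ``$\phi$-norm'' $\prod_{k=0}^{d-1} \phi^k [x,\phi]\phi^{-k}$, which -- if its factors commuted pairwise -- would automatically be $\phi$-fixed. The main technical obstacle is the non-commutativity of $\PSL_2(q)$: writing $x = h\phi^j$ and $h_k := F^{kf}(h)$, one computes $[\phi,x] = h_{-j-1}^{-1} h_{-j}$, so that the $k$-th factor of the norm is $h_{k-j-1}^{-1} h_{k-j}$ and the full product telescopes to $h_{-j-1}^{-1} h_{d-j-1} = h_{-j-1}^{-1}h_{-j-1} = 1$; in fact the same cancellations make the word reduce to the identity already in the free product $G \ast \gensubgrp{x}$, so the naive telescope gives only a trivial identity. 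The delicate step will therefore be to break the telescope -- for instance, by truncating at $d-1$ factors and inserting an intermediate constant $s \in G$ chosen so that the running product fails to collapse -- so that $y(x)$ remains a non-trivial reduced element of $G \ast \gensubgrp{x}$ whose evaluation nevertheless commutes with $\phi$ and thereby lies in $\PSL_2(p^f)$.
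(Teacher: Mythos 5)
There is a genuine gap, and it sits exactly at the step you yourself flag as unresolved: you never produce the word $y(x)$ of length $O(e/f)$ whose evaluation lies in $\PSL_2(p^f)$ for \emph{every} $x$, and your natural candidate (the $\phi$-norm of $[x,\phi]$) collapses to the trivial word. The suggested repair is self-defeating: any constant inserted to prevent the free-product cancellation will in general destroy precisely the $\phi$-equivariance that was supposed to force the evaluation into the fixed-point subgroup, so ``non-trivial as a word with constants'' and ``image commutes with $\phi$'' pull in opposite directions, and no mechanism is offered to reconcile them. As it stands, the only part of the argument that is complete is the base law $[[[c,y^p],y^{p^f-1}],y^{p^f+1}]$ of length $O(p^f)$, which is indeed the one the paper uses.

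The idea you are missing is that landing in the subfield subgroup is far more than is needed. Writing $F$ for the automorphism $\alpha\mapsto\alpha^{p^f}$ (acting entry-wise) and $d=e/f$, the paper substitutes into the base law the \emph{twisted norm} $N(x)=x\,x^F\cdots x^{F^{d-1}}$, where $x^{F^i}=F^{-i}xF^i$; this is a reduced word of length $d$ with all intermediate constants equal to $F^{-1}\neq 1$, so no collapse can occur. Since $F^d=1$, one has $N(x)^F=N(x)^x$, so for $x$ ranging over (a lift to) $\SL_2(q)$ the eigenvalue multiset $\set{\lambda_1,\lambda_2}$ of $N(x)$ is invariant under $\alpha\mapsto\alpha^{p^f}$: either both eigenvalues lie in $\finfield_{p^f}^\times$, or they are swapped and $\lambda_1^{p^f+1}=1$, or $N(x)$ is $\pm$\,unipotent. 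Hence one of $N(x)^p$, $N(x)^{p^f-1}$, $N(x)^{p^f+1}$ is central, and $[[[c,N(x)^p],N(x)^{p^f-1}],N(x)^{p^f+1}]$ (with $c$ non-central) is a mixed identity of length $O(\frac{e}{f}p^f)$ for $\PSL_2(q)$ with constants in $\PSL_2(q)\rtimes\gensubgrp{F}$; Lemma~\ref{lem:nor_subgrp} then passes to the whole semidirect product at the cost of a factor $2$. The point is that $N(x)$ need not be $F$-fixed at all --- being $F$-twisted-conjugate to itself already pins down the eigenvalues --- which is exactly why the telescoping problem you ran into never arises.
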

	
	\begin{proof}
		Let $q=p^e$ and $r=p^f$ be powers of the prime $p$ for $f\mid e$. Write $F$ for the $r$-Frobenius map on $\SL_2(q)$; $\alpha\mapsto\alpha^r$ entry-wise. For $g\in\SL_2(q)$ we have that $h\coloneqq gg^F\cdots g^{F^{e/f-1}}$ is mapped to $h^F=g^F\cdots g^{F^{e/f}}=h^g$ under $F$. But the eigenvalues $\lambda_1,\lambda_2$ of $h$ are mapped to $\lambda_1^F,\lambda_2^F$ (where $F$ is extended in the obvious way to $\overline{\finfield}_q$), so that we must have $\set{\lambda_1,\lambda_2}=\set{\lambda_1^F,\lambda_2^F}$ for any choice of a continuation of $F$. Thus either $\lambda_i^F=\lambda_i^r=\lambda_i$ for $i=1,2$, i.e.\ $\lambda_i\in\finfield_r^\times$, or $\lambda_1^F=\lambda_1^r=\lambda_2$ and $\lambda_1\lambda_2=\det(h)=1$, so that $\lambda_1\lambda_1^F=\lambda_1^{r+1}=1$. Hence, one of $h^p$, $h^{r-1}$, $h^{r+1}$ is central in $\SL_2(q)$, so that $w(x x^F\cdots x^{F^{e/f-1}})$, where $w(x)=[[[c,x^p],x^{r-1}],x^{r+1}]$ ($c$ non-central in $\SL_2(q)$), is a mixed identity for $\SL_2(q)$ (with constants in $\SL_2(q)\rtimes\gensubgrp{\alpha\mapsto\alpha^r}$) of length $e\norm{w}$/f and $\norm{w}\leq2(2(2p+r-1)+r+1)\leq 14r$. Thus we have a mixed identity of length at most $14\frac{e}{f}p^f$ for $\PSL_2(q)$ with constants in $\PSL_2(q)\rtimes\gensubgrp{\alpha\mapsto\alpha^r}$ as desired.
	\end{proof}
	
	The lower bound for $\PSL_2(q)\rtimes\gensubgrp{\alpha\mapsto\alpha^{p^f}}$ needs some additional arguments. At first we need an auxiliary lemma about the fixed points of semi-linear maps:
	
	\begin{lemma}\label{lem:fxd_pts_semilin_map}
		Let $p$ be a prime and $q=p^e$. Let $c$ be an invertible $(x\mapsto x^{p^m})$-semi-linear map ($1\leq m\leq e$) on $V\cong\finfield_q^n$. Then $\overline{c}\colon\proj(V)\to\proj(V)$ has at most $\frac{p^{m' n}-1}{p^{m'}-1}$ fixed points, where $m'\coloneqq\gcd(e,m)$.
	\end{lemma}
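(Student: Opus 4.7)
The plan is to parametrise fixed points of $\overline{c}$ by semi-linear eigenvectors of $c$, group these by an equivalence on eigenvalues, and bound each part by comparing with an $\finfield_q$-linear iterate of $c$. First, $\overline{c}$ fixes an $\finfield_q$-line $\gensubsp{v}$ exactly when $c(v) = \lambda v$ for some $\lambda \in \finfield_q^\times$; for each such $\lambda$, set $V_\lambda := \set{v\in V}[c(v) = \lambda v]$. Since the automorphism $x \mapsto x^{p^m}$ has fixed field $\finfield_{p^{m'}}$ inside $\finfield_q$, each $V_\lambda$ is an $\finfield_{p^{m'}}$-subspace of $V$. The identity $c(\mu v) = \mu^{p^m-1}\lambda\cdot\mu v$ for $\mu \in \finfield_q^\times$ implies that scaling by $\mu$ carries $V_\lambda$ to $V_{\mu^{p^m-1}\lambda}$, and that $\finfield_q v \cap V_\lambda = \finfield_{p^{m'}} v$ for $v \in V_\lambda\setminus\set{0}$. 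Consequently, the collection of fixed lines having an eigenvector in $V_\lambda$ depends only on the coset of $\lambda$ modulo $K := (\finfield_q^\times)^{p^m-1}$, and different cosets contribute disjoint sets of fixed lines.

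Next I would verify that $\finfield_{p^{m'}}$-linearly independent vectors in $V_\lambda$ are automatically $\finfield_q$-linearly independent: applying $c$ to a shortest nontrivial $\finfield_q$-relation $\sum \alpha_i v_i = 0$ (normalised so that $\alpha_1 = 1$) and subtracting yields $\sum (\alpha_i - \alpha_i^{p^m})v_i = 0$, which is either a shorter nontrivial relation (contradicting minimality) or has all coefficients zero, forcing each $\alpha_i \in \finfield_{p^{m'}}$ and contradicting $\finfield_{p^{m'}}$-independence. Setting $d_\lambda := \dim_{\finfield_{p^{m'}}} V_\lambda$, it follows that $\finfield_q V_\lambda$ has $\finfield_q$-dimension $d_\lambda$, and that the number of fixed lines associated with the coset $[\lambda]$ equals $(p^{m'd_\lambda}-1)/(p^{m'}-1)$.

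The decisive step is to prove $\sum_{[\lambda]} d_\lambda \leq n$. Since $x \mapsto x^{p^m}$ has order $s := e/m'$ on $\finfield_q$, the iterate $c^s$ is $\finfield_q$-linear, and iterating $c(v) = \lambda v$ gives $c^s(v) = \N(\lambda)v$ for $v \in V_\lambda$, where $\N(\lambda) := \lambda^{1+p^m+\cdots+p^{m(s-1)}} \in \finfield_{p^{m'}}^\times$ is the norm from $\finfield_q$ to $\finfield_{p^{m'}}$. An order comparison (both subgroups have order $(q-1)/(p^{m'}-1)$, and the containment $K \subseteq \ker(\N)$ is immediate because $p^{m'}-1$ divides $p^m-1$) shows $K = \ker(\N)$. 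Hence distinct cosets $[\lambda]$ give distinct values of $\N(\lambda)$, so the $\finfield_q$-spans $\finfield_q V_\lambda$ lie in distinct eigenspaces of the $\finfield_q$-linear operator $c^s$ and are $\finfield_q$-linearly independent in $V$, yielding the desired bound on $\sum d_\lambda$.

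Finally, the elementary inequality $(p^a-1)+(p^b-1) \leq p^{a+b}-1$ for $a,b \geq 1$ aggregates to $\sum_{[\lambda]}(p^{m'd_\lambda}-1) \leq p^{m'n}-1$, and division by $p^{m'}-1$ yields the claimed fixed-point bound. The main obstacle is the $\finfield_q$-linear independence step: it hinges on the coincidence of the subgroup $(\finfield_q^\times)^{p^m-1}$ with the kernel of the norm, which is precisely what makes the $\finfield_q$-linear iterate $c^s$ effectively separate the various $\finfield_{p^{m'}}$-subspaces $V_\lambda$.
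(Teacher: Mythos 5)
Your proof is correct, but it takes a genuinely different route from the paper's. The paper first replaces $c$ by a suitable power, which is $(x\mapsto x^{p^{m'}})$-semi-linear and has at least as many fixed points in $\proj(V)$, and then counts directly: it fixes a basis of eigenvectors of the span $U$ of all eigenvectors and shows, by comparing coordinates, that the fixed lines supported on a given set of $j$ basis vectors number at most $(p^{m'}-1)^{j-1}$, after which a binomial summation gives $\frac{p^{m'l}-1}{p^{m'}-1}\leq\frac{p^{m'n}-1}{p^{m'}-1}$. You avoid the power trick and instead stratify the fixed lines by the coset of the eigenvalue modulo $K=(\finfield_q^\times)^{p^m-1}$, observe that each eigenspace $V_\lambda$ is an $\finfield_{p^{m'}}$-subspace whose $\finfield_{p^{m'}}$-lines biject with the fixed $\finfield_q$-lines of that coset, and then bound $\sum_{[\lambda]}d_\lambda\leq n$ via two ingredients absent from the paper: the Artin-style lemma that $\finfield_{p^{m'}}$-independent vectors in $V_\lambda$ stay $\finfield_q$-independent, and the identification $K=\ker\bigl(\N_{\finfield_q/\finfield_{p^{m'}}}\bigr)$, which lets the $\finfield_q$-linear iterate $c^{e/m'}$ separate distinct cosets through its eigenvalues $\N(\lambda)$; a convexity inequality then aggregates the counts. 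The paper's argument is shorter and more elementary once the reduction is made, while yours works with $c$ itself, makes the role of the norm transparent, and in fact yields the finer bound $\sum_{[\lambda]}\frac{p^{m'd_\lambda}-1}{p^{m'}-1}$ with $\sum_{[\lambda]}d_\lambda\leq n$, of which the stated estimate is the extreme case.
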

	
	\begin{proof}
		Any proper power of $\overline{c}$ has at least the fixed points of $\overline{c}$ on $\proj(V)$ as its fixed points. Hence we can pass to a proper power of $c$ which is $(x\mapsto x^{p^{m'}})$-semi-linear (recall that $m'=\gcd(e,m)$). Take this as our new $c$. Let $U$ be the subspace spanned by all eigenvectors of $c$. Let $e_1,\ldots,e_l$ be a basis of $U$ consisting of eigenvectors of $c$ (by assumption $l\leq n$). Then
		$$
		u.c=\begin{pmatrix}
			u_1^{p^{m'}} & \cdots & u_l^{p^{m'}}
		\end{pmatrix}
		\diag(\lambda_1,\ldots,\lambda_l)
		$$
		when it is written in the basis $e_1,\ldots,e_l$ for suitable $\lambda_i$ ($i=1,\ldots,l$). Assume there is an eigenvector $u\in U$ that has exactly $j$ coordinates unequal to zero. W.l.o.g.\ assume that these are the first $j$ coordinates.
		Then
		$$
		u.c=\begin{pmatrix} 
			\lambda_1 u_1^{p^{m'}} & \cdots & \lambda_j u_j^{p^{m'}} & 0 & \cdots & 0
		\end{pmatrix}
		=
		\begin{pmatrix}
			\lambda u_1 & \cdots & \lambda u_j & 0 & \cdots & 0    
		\end{pmatrix}
		$$
		So $u_i^{p^{m'}-1}=\lambda/\lambda_i$ ($i=1\ldots,j$). Then any other eigenvector with exactly these coordinates unequal to zero is of the form
		$$
		\begin{pmatrix}
			\alpha_1 u_1 & \cdots & \alpha_j u_j & 0 & \cdots & 0
		\end{pmatrix}
		$$
		for $\alpha_i\in\finfield_q^\times$ such that $\alpha_i^{p^{m'}-1}=\mu$ for some $\mu\in\finfield_q^\times$. The new eigenvector we get is a scalar multiple of
		$$
		\begin{pmatrix}
			u_1 & \frac{\alpha_2}{\alpha_1}u_2 & \cdots & \frac{\alpha_j}{\alpha_1} u_j & 0 & \cdots & 0
		\end{pmatrix}
		$$
		and we can choose the ratios $\alpha_i/\alpha_1$ arbitrarily such that $(\alpha_i/\alpha_1)^{p^{m'}-1}=\mu/\mu=1$, or written differently, $\alpha_i/\alpha_1\in\finfield_{p^{m'}}^\times$ ($i=2,\ldots,j$). Hence there are at most $(p^{m'}-1)^{j-1}$ fixed points of $\overline{c}$ with the first $j$ coordinates non-zero. Thus counting all fixed points by choosing any $1\leq j\leq l$ coordinates to be non-zero, we get at most
		\begin{align*}
			\sum_{j=1}^l{\binom{l}{j}\left(p^{m'}-1\right)^{j-1}} 
			& = \frac{1}{p^{m'}-1}\left(\sum_{j=0}^l{\binom{l}{j}\left(p^{m'}-1\right)^j}-1\right)\\
			& =\frac{p^{m' l}-1}{p^{m'}-1}\leq\frac{p^{m' n}-1}{p^{m'}-1}
		\end{align*}
		as desired. The proof is complete.
	\end{proof}
	
	Now we prove the lower bound in Theorem~\ref{thm:psl} for $\PSL_2(q)\rtimes\gensubgrp{\alpha\mapsto\alpha^{p^f}}$:
	
	\begin{lemma}\label{lem:lwr_bd_aut_ids_psl_2}
		Let $p$ be a prime and $q=p^e$. Let $w\in(\GL_2(q)\rtimes\gensubgrp{\alpha\mapsto\alpha^{p^f}})\ast\gensubgrp{x}$ be of length $0<l\leq\frac{e}{2f}(p^f-1)+1$ (with $f\mid e$ and $1\leq f\leq e/2$) such that $\overline{w}\in(\PGL_2(q)\rtimes\gensubgrp{\alpha\mapsto\alpha^{p^f}})\ast\gensubgrp{x}$ has positive length. Then $\overline{w}$ is not a mixed identity for $\PSL_2(q)$ with constants in $\PGL_2(q)\rtimes\gensubgrp{\alpha\mapsto\alpha^{p^f}}$.
	\end{lemma}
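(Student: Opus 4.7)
The plan is to adapt the argument of Lemma~\ref{lem:psl_2_fnl_lem} to the semi-linear setting, using the sharper fixed-point bound of Lemma~\ref{lem:fxd_pts_semilin_map}. First, I would invoke Lemma~\ref{lem:red_one_var_cs} and absorb the boundary constants as in the proof of Lemma~\ref{lem:psl_2_fnl_lem}, reducing to a word $w'=x^{\varepsilon(1)}c_1\cdots c_{l-1}x^{\varepsilon(l)}$ in which every intermediate $c_j=g_jF^{m_j}$ (with $f\mid m_j$) is non-central in $\PGL_2(q)\rtimes\gensubgrp{F^f}$. By Lemma~\ref{lem:fxd_pts_semilin_map}, each $\bar c_j$ fixes at most $p^{m'_j}+1$ points of $\proj(V)$, where $m'_j:=\gcd(e,m_j)\leq e/2$ for genuinely semi-linear $c_j$; linear non-central $c_j$ contribute at most $2$. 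The hypothesis $l-1\leq\frac{e(p^f-1)}{2f}$ combined with $f\leq e/2$ yields $\sum_{j=1}^{l-1}|\fix(\bar c_j)|<q+1=|\proj(V)|$, so I can select a line $L\in\proj(V)$ fixed by no $\bar c_j$, and, as in Lemma~\ref{lem:ex_good_rk_one_op}, take $h\in\End(V)$ of rank one with $\im(h)=\ker(h)=L$.

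Next, following Lemma~\ref{lem:poly_mthd}, I substitute $x=k(\lambda)=1_V+\lambda h$ into $w'$ and push Frobenius twists to the right via $F^mA=A^{F^m}F^m$. Writing $\tilde M_1:=0$ and $\tilde M_j:=m_1+\cdots+m_{j-1}\pmod e$ for $j\geq 2$, this yields
$$
P(\lambda):=w'(k(\lambda))=(1_V+\varepsilon(1)\lambda h)\prod_{j=1}^{l-1}\bigl[g_j^{F^{\tilde M_j}}\bigl(1_V+\varepsilon(j+1)\lambda^{p^{\tilde M_{j+1}}}h^{F^{\tilde M_{j+1}}}\bigr)\bigr]\cdot F^{\tilde M_l}\text{.}
$$
For $\bar w$ to be a mixed identity for $\PSL_2(q)$, one needs $\tilde M_l\equiv 0\pmod e$ and $P(\lambda)\in\finfield_q\cdot 1_V$ for every $\lambda\in\finfield_q$.

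To rule this out, I expand $P(\lambda)$ as a polynomial in $\lambda$ with coefficients in $\End(V)$. Selecting the $\lambda$-term from every factor and iterating the telescoping computation from the proof of Lemma~\ref{lem:poly_mthd}, extended to accommodate Frobenius twists (with each intermediate scalar non-zero by our choice of $L$), I find that the leading coefficient is a non-zero rank-one operator, hence not a scalar matrix. Consequently the non-scalar component of $P$ is a non-zero polynomial of $\lambda$-degree $\sum_{j=1}^{l}p^{\tilde M_j}$, and provided this degree is strictly less than $q$, Lemma~\ref{lem:poly_non_const} produces some $\lambda\in\finfield_q$ with $P(\lambda)\notin\finfield_q\cdot 1_V$, contradicting the identity assumption.

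The main technical obstacle is precisely this degree control: naively the semi-linear exponents can drive $\deg_\lambda P$ as high as $l\cdot p^{e-f}$, and one must account carefully for the distribution of the partial sums $\tilde M_j\in\{0,f,2f,\ldots,e-f\}$ modulo $e$ to verify that the bound $l\leq\frac{e(p^f-1)}{2f}+1$ is exactly the threshold forcing $\sum_j p^{\tilde M_j}<q$.
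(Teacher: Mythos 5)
Your set-up (reduction via Lemma~\ref{lem:red_one_var_cs}, choice of the rank-one $h$ using the fixed-point bound of Lemma~\ref{lem:fxd_pts_semilin_map}, non-vanishing of the leading coefficient, then Lemma~\ref{lem:poly_non_const}) matches the paper's argument, but the final step contains a genuine gap: your claim that the hypothesis $l\leq\frac{e}{2f}(p^f-1)+1$ forces $\deg_\lambda P=\sum_{j=1}^{l}p^{\tilde M_j}<q$ is false, and no amount of "careful accounting of the partial sums $\tilde M_j$" will rescue it. The partial sums are not constrained to be small: take the first constant with Frobenius part $F^{e-f}$ and all later intermediate constants linear (and non-central); then $\tilde M_j=e-f$ for every $j\geq 2$ and the degree is about $l\,p^{e-f}$, which exceeds $q=p^e$ as soon as $l>p^{f}$ --- e.g.\ $p=2$, $f=1$, $e=10$ allows $l=6>2$, giving degree $\approx 6\cdot 2^{9}>2^{10}$. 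Once the degree reaches $q$, Lemma~\ref{lem:poly_non_const} is inapplicable (on $\finfield_q$ one has $\lambda^q=\lambda$, so a non-zero high-degree coefficient no longer certifies non-constancy), so your concluding step fails exactly in the regime the lemma is supposed to cover.

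The paper closes this gap with an averaging device that is absent from your proposal: it considers all the Frobenius twists $w',\,w'^{F},\ldots,w'^{F^{e/f-1}}$ of the word, each of which is again constant on $\PSL_2(q)$ if $w'$ is, and whose associated polynomials have degrees summing to $l\sum_{i=0}^{e/f-1}p^{fi}=l\,\frac{q-1}{p^f-1}$. Hence some twist yields a polynomial of degree at most $l\frac{f}{e}\cdot\frac{q-1}{p^f-1}\leq q-1$ under the hypothesis $l\leq\frac{e}{f}(p^f-1)$, and Lemma~\ref{lem:poly_non_const} is applied to that twist (the non-vanishing of its leading coefficient is inherited from your choice of $h$, since twisting conjugates everything coherently by a power of $F$). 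You would need to add this step --- or some equivalent mechanism for replacing $w'$ by a word of controlled degree --- before your argument goes through; the rest of your outline (including the counting $\sum_j\card{\fix(\overline{c}_j)}<q+1$, which does follow from $l-1\leq\frac{e}{2f}(p^f-1)$ via the convexity estimate $\frac{p^f-1}{p^{e/2}-1}\leq\frac{2f}{e}$ as in the paper) is in line with the published proof.
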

	
	\begin{proof}
		Set $F$ to be the Frobenius automorphism $\finfield_q\to\finfield_q; \alpha\mapsto\alpha^{p^f}$ and let $F$ act coordinate-wise on $V\cong\finfield_q^2$ and on the matrices $\M_2(q)\cong\End(V)$. Assume
		\begin{align*}
			w &= c_0 x^{\varepsilon(1)} c_1\cdots c_{l-1} x^{\varepsilon(l)} c_l\\
			&=b_0.F^{m(0)} x^{\varepsilon(1)} b_1.F^{m(1)}\cdots b_{l-1}.F^{m(l-1)} x^{\varepsilon(l)} b_l.F^{m(l)}
		\end{align*}
		is such that $\overline{w}$ is a mixed identity with $b_j\in\GL_2(q)$, integers $0\leq m(j)\leq e/f-1$ ($j=0,\ldots,l$), and maps $c_j\in\GL_2(q)\rtimes\gensubgrp{\alpha\mapsto\alpha^{p^f}}\setminus\finfield_q^\times 1_V$ ($j=1,\ldots,l-1$), as we may assume by Lemma~\ref{lem:red_one_var_cs}. Then we can shift the $F$'s so that $w=a_0 (x^{\varepsilon(1)})^{F^{n(1)}}a_1\cdots a_{l-1}(x^{\varepsilon(l)})^{F^{n(l)}}a_l$ for linear maps $a_j\in\GL_2(q)$ ($j=0,\ldots,l$) and integers $0\leq n(j)\leq e/f-1$, since $\overline{w}$ is a mixed identity. Again, we may concentrate on the word 
		$$
		w'=(x^{\varepsilon(1)})^{F^{n(1)}}a_1\cdots a_{l-1}(x^{\varepsilon(l)})^{F^{n(l)}}
		$$ 
		instead of $w$, which shall be such that $\overline{w}'\in(\PGL_2(q)\rtimes\gensubgrp{\alpha\mapsto\alpha^{p^f}})\ast\gensubgrp{x}$ is not a constant. We want to pursue the same strategy as in the proof of the lower bound for $\PSL_2(q)$. When plugging in $k(\lambda)=1_V+\lambda h$ for $x$ into $w'$, we must evaluate $k(\lambda)^{F^{n(j)}}=(1_V+\lambda h)^{F^{n(j)}}=1_V+\lambda^{p^{fn(j)}} h^{F^{n(j)}}$. We want to have that the leading coefficient $h^{F^{n(1)}}a_1 h^{F^{n(2)}}\cdots h^{F^{n(l-1)}}a_{l-1}h^{F^{n(l)}}$ of the polynomial in $\lambda$, which is obtained by evaluating $w'(k(\lambda))$, is non-zero. This means that $h^{F^{n(j)}}a_j h^{F^{n(j+1)}}$ is non-zero ($j=1,\ldots,l-1$). 
		
		Recall from the proof of the lower bound for $\PSL_2(q)$ that $h$ is a linear map such that $h\colon b\mapsto v; v\mapsto 0$ for suitable $0 \neq v \in V$ and $v \neq b \in B$, for $B$ a basis of $V$ containing $v$. The condition that $h^{F^{n(j)}}a_j h^{F^{n(j+1)}}\neq 0_V$ then means that $v.F^{n(j)}.a_j\neq\lambda v.F^{n(j+1)}$, so 
		$$
		v.F^{n(j)}.a_j.F^{-n(j+1)}=v.F^{n(j)-n(j+1)}.a_j^{F^{-n(j+1)}}\neq\mu v,
		$$
		where $\mu=\lambda.F^{-n(j+1)}$. If $n(j)=n(j+1)$, $F^{n(j)-n(j+1)}.a_j^{F^{-n(j+1)}}=a_j^{F^{-n(j+1)}}$ is a non-trivial linear map and so has at most two fixed points. Else $F^{n(j)-n(j+1)}.a_j^{F^{-n(j+1)}}$ is a $(x\mapsto x^{p^{f m(j)}})$-semi-linear map and hence has at most 
		$$
		p^{\gcd(e,f m(j))}+1\leq p^{\gcd(e,f(e/f-1))}+1\leq p^{e/2}+1
		$$ 
		fixed points by Lemma~\ref{lem:fxd_pts_semilin_map}. But $2<p^{e/2}+1$ (as $e\geq1$, $p\geq2$) and 
		$$
		(p^{e/2}+1)(l-1)\leq (p^{e/2}+1)\frac{e}{2f}(p^f-1)\leq q-1<q+1=\card{\proj(V)}
		$$ 
		as $1\leq f\leq e/2$, where we use the assumption $l\leq\frac{e}{2f}(p^f-1)+1$ in the first inequality. The second inequality holds since $\frac{p^f-1}{p^{e/2}-1}\leq\frac{2f}{e}$ as both sides evaluate to $0$ for $f=0$ and to $1$ for $f=e/2$ and the left hand side is convex as a function of $f$. Hence there is a solution $v$ to the inequalities $v.F^{n(j)}.a_j\neq\lambda v.F^{n(j+1)}$ ($j=1,\ldots,l-1$).
		
		By evaluating $w'(k(\lambda))$ we obtain a polynomial of degree $\sum_{j=1}^l{p^{fn(j)}}$ in $\lambda$. Now, if we consider all the words $w',w'^F,\ldots,w'^{F^{e/f-1}}$ we note that the sum of the degrees of the corresponding polynomials is 
		$$
		l\sum_{i=0}^{e/f-1}{p^{fi}}.
		$$ 
		Hence there is a word $w'^{F^i}$ for $0 \leq i \leq e/f-1$ that gives a polynomial of degree at most 
		$$
		l\frac{f}{e}\sum_{i=0}^{e/f-1}{p^{fi}}=l\frac{f}{e}\frac{p^{fe/f}-1}{p^f-1}=l\frac{f}{e}\frac{q-1}{p^f-1}.$$ 
		However, this is less than or equal to $q-1$ when $l\leq \frac{e}{f}(p^f-1)$. But by assumption $l\leq\frac{e}{2f}(p^f-1)+1\leq\frac{e}{f}(p^f-1)$ since $f\leq e/2$. Hence the image of the polynomial $\lambda\mapsto w'(k(\lambda))$ is not contained in a one-dimensional subspace by Lemma~\ref{lem:poly_non_const}. This completes the proof.
	\end{proof}
	
	Now we turn to the proof for $\PSL_n(q)$:
	
	\begin{lemma}\label{lem:lwr_bd_aut_ids_psl_n}
		Let $p$ be a prime and $q=p^e$. Let $w\in(\GL_n(q)\rtimes\gensubgrp{\alpha\mapsto\alpha^{p^f}})\ast\gensubgrp{x}$ be of length $0<l\leq\frac{e}{2f}(p^f-1)+1$ when $n=2$, and $0<l\leq\frac{e}{f}(p^f-1)$ for $n\geq3$ (where $f\mid e$ and $1\leq f\leq e/2$) such that $\overline{w}\in(\PGL_n(q)\rtimes\gensubgrp{\alpha\mapsto\alpha^{p^f}})\ast\gensubgrp{x}$ is of positive length. Then $\overline{w}$ is not a mixed identity for $\PSL_n(q)$ with constants in $\PGL_n(q)\rtimes\gensubgrp{\alpha\mapsto\alpha^{p^f}}$.
	\end{lemma}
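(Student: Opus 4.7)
The plan is to synthesize two earlier arguments: the Frobenius-shifting and semi-linear fixed-point analysis of Lemma~\ref{lem:lwr_bd_aut_ids_psl_2}, together with the rank-one/hyperplane construction used to pass from $n=2$ to $n\geq3$ in Lemma~\ref{lem:psl_n_fnl_lem}. The case $n=2$ is already Lemma~\ref{lem:lwr_bd_aut_ids_psl_2}, so the new content is the case $n\geq 3$.

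First, apply Lemma~\ref{lem:red_one_var_cs} to assume that all intermediate constants of $w$ are non-central in $\GL_n(q)\rtimes\gensubgrp{F}$, and then, exactly as at the start of the proof of Lemma~\ref{lem:lwr_bd_aut_ids_psl_2}, commute the Frobenius factors past the variables to reduce to
$$
w'=(x^{\varepsilon(1)})^{F^{n(1)}}a_1\cdots a_{l-1}(x^{\varepsilon(l)})^{F^{n(l)}}
$$
with $a_j\in\GL_n(q)$ and $n(j)\in\{0,\ldots,e/f-1\}$, so that $\overline{w}'$ remains a non-trivial mixed identity. Then, mirroring Lemma~\ref{lem:psl_n_fnl_lem}, substitute $k(\lambda)=1_V+\lambda h$ for a rank-one nilpotent $h\in\End(V)$ with $h^2=0$, determined by a nonzero $v\in V$ and a hyperplane $H\ni v$ via $\im(h)=\gensubsp{v}$ and $\ker(h)=H$.

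The crucial point is that the leading coefficient of $w'(k(\lambda))\in\End(V)[\lambda]$ is, up to sign,
$$
h^{F^{n(1)}}a_1 h^{F^{n(2)}}\cdots a_{l-1}h^{F^{n(l)}},
$$
which is nonzero precisely when $v.b_j\notin H$ for each $j=1,\ldots,l-1$, where $b_j\coloneqq F^{n(j)-n(j+1)}a_j^{F^{-n(j+1)}}\in\GL_n(q)\rtimes\gensubgrp{F}$ is invertible and non-central. Two counting steps now determine $h$. First, $\gensubsp{v}$ is chosen in $\proj(V)$ outside $\fix(\overline{b}_j)$ for every $j$: when $n(j)\neq n(j+1)$, the map $\overline{b}_j$ is genuinely semi-linear and has at most $\frac{p^{m'_j n}-1}{p^{m'_j}-1}$ fixed points by Lemma~\ref{lem:fxd_pts_semilin_map}, with $m'_j=\gcd(e,f(n(j)-n(j+1)))\leq e/2$; when $n(j)=n(j+1)$, the map is $\finfield_q$-linear and non-scalar, with at most $\frac{q^{n-1}-1}{q-1}+1$ fixed points exactly as in Lemma~\ref{lem:psl_n_fnl_lem}. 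Second, with $v$ fixed, each $v.b_j$ lies outside $\gensubsp{v}$, so requiring $v.b_j\notin H$ excludes at most $\frac{q^{n-2}-1}{q-1}$ of the $\frac{q^{n-1}-1}{q-1}$ hyperplanes through $v$, per index $j$. The hypothesis $l\leq\frac{e}{f}(p^f-1)$ is precisely calibrated to leave room in both counts when $n\geq 3$.

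With $h$ constructed, $w'(k(\lambda))$ is a polynomial in $\lambda$ of degree $\sum_{j=1}^{l}p^{fn(j)}$ whose leading coefficient is a nonzero rank-one operator, linearly independent from the invertible constant term $a_1\cdots a_{l-1}$. Averaging the degree over the Galois conjugates $w'^{F^i}$ for $i=0,\ldots,e/f-1$ as in the proof of Lemma~\ref{lem:lwr_bd_aut_ids_psl_2}, each conjugate being a mixed identity to which the same construction applies, produces an index $i$ whose polynomial has degree at most $l\cdot\frac{f}{e}\cdot\frac{q-1}{p^f-1}\leq q-1$. Lemma~\ref{lem:poly_non_const} then forces $\lambda\mapsto w'^{F^i}(k(\lambda))$ to have image outside any one-dimensional subspace of $\End(V)$, contradicting that $\overline{w}$ is a mixed identity. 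The main obstacle is expected to be the careful verification of the two counting inequalities uniformly across the full range $l\leq\frac{e}{f}(p^f-1)$, since the semi-linear fixed-point bound of Lemma~\ref{lem:fxd_pts_semilin_map} is somewhat loose when $e/f$ is large relative to $p^f$; small or borderline choices of $(p,e,f,n)$ may require separate direct checking.
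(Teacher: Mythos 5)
Your proposal follows essentially the same route as the paper's proof: reduce via Lemma~\ref{lem:red_one_var_cs}, isolate the Frobenius powers, choose $\gensubsp{v}$ avoiding the fixed points of the (semi-)linear intermediate maps via Lemma~\ref{lem:fxd_pts_semilin_map}, choose a hyperplane $H$ through $v$ avoiding the images $v.b_j$, and then conclude by the Galois-conjugate degree-averaging argument and Lemma~\ref{lem:poly_non_const}. The counting you defer does go through uniformly: the paper compares the semi-linear bound $\frac{q^{n/2}-1}{q^{1/2}-1}$ with the linear bound $\frac{q^{n-1}-1}{q-1}+1$, splits into the cases $n=3$ and $n\geq4$, and uses $\frac{e}{2f}(p^f-1)\leq q^{1/2}-1$ (the convexity estimate from the $n=2$ case), so no borderline cases need separate treatment.
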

	
	\begin{proof}
		As we did above, we want to plug in $k(\lambda)=1_V+\lambda h$ into $w'=x^{\varepsilon(1)}c_1\cdots c_{l-1}x^{\varepsilon(l)}\in(\GL_n(q)\rtimes\gensubgrp{\alpha\mapsto\alpha^{p^f}})\ast\gensubgrp{x}$ for a suitable rank-one operator $h\in\End(V)\cong\M_n(q)$ and a scalar $\lambda\in\finfield_q$ to get a non-constant polynomial in $\lambda$ of degree less than $q$ forcing $\overline{w}'$ to be non-constant. For this purpose we want to find a vector $v$ and a hyperplane $H$ such that $h\colon b\mapsto v$ for some $b\notin H$, $v\in H=\ker(h)$, and $hc_j h\neq 0_V$ ($j=1,\ldots,l-1$). Then $h^2=0$. By Lemma~\ref{lem:fxd_pts_semilin_map} applied to the $(x\mapsto x^{p^{f m(j)}})$-semi-linear map $c=c_j$ ($j=1\ldots,l$) we obtain that $\overline{c}_j$ has at most 
		$$\frac{p^{n\gcd(e,f m(j))}-1}{p^{\gcd(e,f m(j))}-1}\leq \frac{p^{\frac{en}{2}}-1}{p^{e/2}-1}=\frac{q^{n/2}-1}{q^{1/2}-1}
		$$ 
		fixed points, unless $m(j)=0$ and $c_j$ is linear and so $\overline{c}_j$ has at most 
		$$
		\frac{q^{n-1}-1}{q-1}+1
		$$
		fixed points in $\proj(V)=\proj(\finfield_q^n)$. Hence $\overline{c}=\overline{c}_j$ can have at most
		$$
		\max\left(\frac{q^{n/2}-1}{q^{1/2}-1},\frac{q^{n-1}-1}{q-1}+1\right)
		$$ 
		fixed points. But multiplying both terms by $q-1$ and subtracting the left from the right, we obtain
		$$q^{n-1}-1+q-1-(q^{n/2}-1)(q^{1/2}+1)=q^{n-1}-q^{\frac{n+1}{2}}-q^{n/2}+q+q^{1/2}-1.
		$$ 
		For $n=2$, this is 
		$$
		q-q^{3/2}-q+q+q^{1/2}-1=-q^{3/2}+q+q^{1/2}-1=-(q^{1/2}-1)^2(q^{1/2}+1)<0
		$$ 
		since $q\geq2$. If $n=3$, we obtain
		$$
		q^2-q^2-q^{3/2}+q+q^{1/2}-1=-(q^{1/2}-1)^2(q^{1/2}+1)<0
		$$
		as well ($q\geq2$). In these both cases, the above maximum is $q^{1/2}+1$ resp.\ $\frac{q^{3/2}-1}{q^{1/2}-1}=q+q^{1/2}+1$. So, noting that $\frac{e}{2f}(p^f-1)\leq p^{e/2}-1=q^{1/2}-1$ as in the proof for $\PSL_2(q)$ above, since by assumption $l-1\leq\frac{e}{2f}(p^f-1)$, for $n=2$ we get
		\begin{align*}
			(q^{1/2}+1)(l-1) & \leq (q^{1/2}+1)\frac{e}{2f}(p^f-1)\\
			& \leq(q^{1/2}+1)(q^{1/2}-1)=q-1\\
			& <q+1=\card{\proj(V)},
		\end{align*}
		so we find a suitable $v$ in this case. For $n=3$, we get by the assumption $l\leq\frac{e}{f}(p^f-1)$ that
		\begin{align*}
			(q+q^{1/2}+1)(l-1) &<(q+q^{1/2}+1)\frac{e}{f}(p^f-1)\\
			& \leq(q+q^{1/2}+1)2(q^{1/2}-1)\\
			& =2(q^{3/2}+q+q^{1/2}-q-q^{1/2}-1)=2(q^{3/2}-1)\\
			& <q^2+q+1=\card{\proj(V)},
		\end{align*}
		so there is a good choice for $v$ as well in this case.
		For $n\geq4$, we have
		$$
		q^{n-1}-q^{\frac{n+1}{2}}-q^{n/2}+q+q^{1/2}-1>0
		$$
		and hence $\frac{q^{n-1}-1}{q-1}+1$ is the above maximum. Note now that $q-1>\frac{e}{f}(p^f-1)$ (as $f\leq e/2$) and by assumption $l\leq\frac{e}{f}(p^f-1)$ we have
		\begin{align*}
			\left(\frac{q^{n-1}-1}{q-1}+1\right)(l-1) 
			& <\left(\frac{q^{n-1}-1}{q-1}+1\right)\frac{e}{f}(p^f-1)\\
			& <\left(\frac{q^{n-1}-1}{q-1}+1\right)(q-1)=q^{n-1}-1+q-1\\
			& <q^{n-1}+q^{n-2}+\cdots+1=\frac{q^n-1}{q-1}
		\end{align*}
		Hence in all cases we find a suitable $v$ such that $\gensubsp{v}$ is not a fixed point of any of the $\overline{c}_j$ ($j=1\ldots,l-1$). The vectors $v_j\coloneqq v.c_j$ ($j=1,\ldots,l-1$) do not lie in $\gensubsp{v}$. For $H$ all hyperplanes are allowed such that $v\in H$ and $v_j\notin H$ ($j=1,\ldots,l-1$). Counting the hyperplanes that contain $v$ and $v_j$ for one $1\leq j\leq l-1$, we get
		\begin{align*}
			\frac{q^{n-2}-1}{q-1}(l-1) 
			& <\frac{q^{n-2}-1}{q-1}\frac{e}{f}(p^f-1)\\
			& <\frac{q^{n-2}-1}{q-1}(q-1)=q^{n-2}-1\\
			& <q^{n-2}+q^{n-3}+\cdots+q+1=\frac{q^{n-1}-1}{q-1},
		\end{align*}
		hence we can choose $H$ containing $v$ but none of the $v_j$. Now we run the argument as above for $\PSL_2(q)$. For this we need that $l\leq\frac{e}{f}(p^f-1)$ which is guaranteed by the assumptions in the cases $n=2$ and $n\geq 3$. This ends the proof.
	\end{proof}
	
	It is still beyond out current understanding how to incorporate the inverse-transpose automorphism in such an argument. Indeed, the above proof cannot work if we allow the inverse-transpose automorphism. Namely, then $((1_V+\lambda h)^{-1})^\top=(1_V-\lambda h)^\top=1_V-\lambda h^\top$. Let $h=a^\top b$ for vectors $a,b\in\finfield_q^n$ with $ba^\top=0$, i.e.\ $h^2=0$. Now suppose that $c_j\in\GL_n(q)$ is a critical constant with $v c_j v^\top=0$ for all $v\in\finfield_q^n$, i.e.\ $c_j$ is alternating. In that case $h^\top c_j h=b^\top ac_j a^\top b=0$ occurs in the product $c_0 h^\ast c_1\cdots c_{l-1} h^\ast c_l$ and the leading coefficient of $\lambda^{\sum_{j=1}^l{p^{fn(j)}}}$ in $w'(1_V+\lambda h)$ would be zero. Hence the above argument breaks down. 
	
	%Finally, to see the second part of Theorem~\ref{thm:shrt_aut_ids_for_psl2}, namely that the length of a shortest mixed identity from $(\PGL_n(q)\rtimes\gensubgrp{\alpha\mapsto\alpha^{p^f}})\ast\gensubgrp{x}$ for $\PGL_n(q)\rtimes\gensubgrp{\alpha\mapsto\alpha^{p^f}}$ is $\Omega(\frac{e}{f}p^f)$ as well, note that this is implied immediately by Remark~\ref{rem:rest_id_nor_subgrp}.
	
	\section{An alternative approach to \texorpdfstring{$\PSL_2(q)$}{PSL2}}
	
	\begin{lemma} \label{psl2second}
		The shortest mixed identity $\overline{w}$ for $\PSL_2(q)$ is of length at least $q/8$.
	\end{lemma}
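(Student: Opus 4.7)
The plan is to pursue a polynomial-method argument directly on the affine variety $\SL_2$ via a Schwartz--Zippel estimate, rather than using the specific rank-one nilpotent substitution $x\mapsto 1_V+\lambda h$ of Lemma~\ref{lem:psl_2_fnl_lem}. First I would apply Lemma~\ref{lem:red_one_var_cs} to reduce to a one-variable reduced word $w=c_0 x^{\varepsilon(1)} c_1 \cdots c_{l-1} x^{\varepsilon(l)} c_l\in \PGL_2(q)\ast\gensubgrp{x}$ with non-central intermediate constants, and lift its constants to $\GL_2(q)$ (rescaling so they lie in $\SL_2(q)$ up to a fixed scalar).

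I then parametrise $\SL_2(q)$ by matrix entries $(a,b,c,d)$ with $ad-bc=1$; since both $g$ and $g^{-1}$ have entries linear in $(a,b,c,d)$, each matrix entry of $w(g)$ is a polynomial of degree at most $l$ in the coordinate ring $R\coloneqq\finfield_q[a,b,c,d]/(ad-bc-1)$. Because $\overline{w}$ is a mixed identity for $\PSL_2(q)$, at every $\finfield_q$-rational point we have $w(g)\in\set{\pm 1_V}$, so the off-diagonal polynomials $w(g)_{12},w(g)_{21}$ vanish on all of $\SL_2(\finfield_q)$ while the diagonal polynomials satisfy $w(g)_{ii}^2-1=0$ there. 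Invoking a Schwartz--Zippel bound on the irreducible $3$-dimensional variety $\SL_2$ (of degree $2$ in $\mathbb{A}^4$)---a nonzero element of $R$ of degree $d$ has $O(dq^2)$ zeros on $\SL_2(\finfield_q)$, whereas $\card{\SL_2(\finfield_q)}=q^3-q$---one obtains a threshold of the form $l<q/8$ below which all four polynomials above must vanish identically in $R$. The factor of $1/8$ would be squeezed out by carefully tracking the doubled degree arising in $w(g)_{ii}^2-1$ together with the constants in the Schwartz--Zippel estimate. Since $R$ is a domain ($\SL_2$ being geometrically irreducible) and the only units in $R$ are scalars, the relations $w(g)_{ii}^2=1$ then force $w(g)_{ii}\in\set{\pm 1}$ identically in $R$, so $w$ is the constant morphism $g\mapsto\pm 1_V$ on $\SL_2$.

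The main obstacle, and the most delicate part, is to derive a contradiction from this last conclusion: a non-trivial reduced word with non-central intermediate constants cannot be identically $\pm 1_V$ on $\SL_2$. One route is to combine Lemma~\ref{lem:no_shrt_ids} with an induction on $l$, using that constancy of $w(g)$ as a morphism forces strong commutation/cancellation relations among the $c_j$ that contradict reducedness; alternatively, one may exhibit a specific substitution (for example a suitable unipotent element, or a conjugate of one of the non-central constants) on which $w$ visibly fails to lie in $\set{\pm 1_V}$, provided the algebraic constraints on the intermediate constants extracted above are robust enough. This step is where the flexibility of working over the whole coordinate ring of $\SL_2$, rather than along a one-parameter curve, should pay off.
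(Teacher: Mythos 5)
Your reduction step is fine in outline (and is essentially the Tomanov/Gordeev-plus-Schwartz--Zippel route that the authors mention in the introduction as an alternative, non-rank-uniform method), but the proof has a genuine gap exactly where the real difficulty of the lemma sits: after Schwartz--Zippel you are left needing to show that a reduced word $w$ with non-central intermediate constants cannot induce the constant map $g\mapsto\pm 1_V$ on the algebraic group $\SL_2$ (equivalently, that the entry polynomials of $w(g)$ in $\finfield_q[a,b,c,d]/(ad-bc-1)$ cannot all be scalar). You yourself flag this as ``the main obstacle'' and offer only tentative routes, neither of which works as stated: Lemma~\ref{lem:no_shrt_ids} only treats words of length at most $3$ under special hypotheses, and no induction scheme or ``specific substitution'' is actually exhibited. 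But this statement \emph{is} the theorem being proved, transported from the finite group to the algebraic group; it is a nontrivial result (it is Tomanov's/Gordeev's freedom theorem for $\PSL_2$ over an infinite field, or alternatively it follows by rerunning the substitution $x\mapsto 1_V+\lambda h$ of Lemmas~\ref{lem:ex_good_rk_one_op} and~\ref{lem:poly_mthd} over $\overline{\finfield}_q$, where the counting obstructions disappear). Without proving or properly citing it, the argument is circular in effect: the hard content has been relocated, not established. (A secondary, fillable issue: the Schwartz--Zippel bound you invoke is for polynomials on a $3$-dimensional variety of degree $2$ rather than on affine space, which needs a degree-of-variety version of the lemma and some care with the constants to actually reach $q/8$.)

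For comparison, the paper avoids this issue entirely by a different mechanism: using Serre's amalgam decomposition of $\GL_2(\finfield_q[t])$ it produces an explicit $g\in\SL_2(\finfield_q[t])$ with entries of degree at most $4$ such that $\PSL_2(\finfield_q)$ and $\langle\overline{g}\rangle$ generate their free product in $\PSL_2(\finfield_q[t])$. A mixed identity $\overline{w}$ then maps, via this embedding, to a matrix $W$ over $\finfield_q[t]$ which is automatically $\neq\pm 1_2$ (injectivity of the embedding replaces any ``non-constancy of word maps on the algebraic group'' input), while specialisation $t\mapsto\alpha$ forces $W(\alpha)=\pm 1_2$ for all $\alpha\in\finfield_q$; comparing degrees of $W_{ij}(t)^2-\delta_{ij}$ gives $8l\geq q$ by counting roots of a one-variable polynomial, with no Schwartz--Zippel over a variety needed. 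If you want to salvage your approach, the clean fix is to prove the algebraic-group statement by the paper's own one-parameter argument over $\overline{\finfield}_q$ (or cite Gordeev \cite{gordeev1997freedom}); as written, that step is missing.
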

	
	Let $A \ast_C B$ be the amalgamated free product of the groups $A$ and $B$ over the the common subgroup $C$. Recall that a \emph{reduced expression} in $A \ast_C B$ is a tuple $(c;a_0,b_0,\ldots,a_l,b_l)$, where $l \geq 0$; $c\in C$; $a_j\in A \setminus C$ for $j\geq 1$ and $b_j\in B\setminus C$ for $j\leq l-1$, while $a_0\in(A\setminus C)\cup\set{1_C}$ and $b_l\in(B\setminus C)\cup\set{1_C}$. The key fact we need is that if $(c;a_0,b_0,\ldots,a_l,b_l)$ is a reduced expression, then $c a_0 b_0 \cdots a_l b_l$ is a non-trivial element of $A\ast_C B$, unless $l=0$ and $c=a_0=b_0=1_C$. 
	
	In particular, these observations apply to the free product $A\ast B$. In this case we shall also require the converse observation, namely that if $G$ is a group, generated by the two subgroups $A$ and $B$, such that for all $a_0,\ldots,a_l\in A$ and $b_0,\ldots,b_l\in B$, with all except possibly $a_0$ and $b_l$ non-trivial, $a_0 b_0\cdots a_l b_l$ is non-trivial also unless $l=0$ and $a_0=b_0=1_G$, then the natural map $A\ast B\rightarrow G$ is an isomorphism. 
	
	\begin{theorem}[\cite{serre1980trees}*{Chapter~II, Theorem~6}]\label{thm:serre_alg_prod}
		Let $$B(\finfield_q),B(\finfield_q[t])\leq\GL_2(\finfield_q[t])$$ be, respectively, the subgroups of invertible upper triangular matrices over $\finfield_q$ and $\finfield_q[t]$. Then $\GL_2(\finfield_q[t])$ is the amalgamated free product of $\GL_2(\finfield_q)$ and $B(\finfield_q[t])$ over $B(\finfield_q)=\GL_2(\finfield_q) \cap B(\finfield_q[t])$. 
	\end{theorem}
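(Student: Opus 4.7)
The plan is to apply Bass--Serre theory to the natural action of $\Gamma := \GL_2(\finfield_q[t])$ on the Bruhat--Tits tree $T$ of $\GL_2(K)$, where $K := \finfield_q((t^{-1}))$ is the completion of $\finfield_q(t)$ at the infinite place, with ring of integers $\mathcal{O} := \finfield_q[[t^{-1}]]$ and uniformizer $\varpi := t^{-1}$. Recall that vertices of $T$ are homothety classes of rank-two $\mathcal{O}$-lattices in $K^2$, with $[L]$ and $[L']$ adjacent when representatives can be chosen satisfying $\varpi L \subsetneq L' \subsetneq L$; it is standard that $T$ is then a $(q+1)$-regular tree on which $\GL_2(K)$ acts transitively on vertices, the stabilizer of $v_0 := [\mathcal{O}^2]$ being $K^\times \GL_2(\mathcal{O})$.

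I would next identify a fundamental domain for the $\Gamma$-action. Consider the geodesic ray $\rho \colon v_0, v_1, v_2, \ldots$ with $v_n := [\mathcal{O} e_1 + \varpi^n \mathcal{O} e_2]$, converging to the end of $T$ defined by the line $K e_1$. The claim is that $\rho$ meets every $\Gamma$-orbit exactly once. Surjectivity onto $\rho$ reduces to a Euclidean-algorithm argument in $\finfield_q[t]$: any basis matrix of a given $\mathcal{O}$-lattice can be brought into Smith normal form $\diag(1,\varpi^n)$ by left multiplication by an element of $\Gamma$ combined with a change of $\mathcal{O}$-basis on the right. Orbit-distinctness follows from a covolume comparison: if $\gamma v_n = v_m$ with $\gamma\in\Gamma$, then $|s|^2|\varpi|^n=|\varpi|^m$ for some $s\in K^\times$ (using that $|\det\gamma|=1$ since $\det\gamma\in\finfield_q^\times$); combining this with the constraint that entries of $\gamma$ have non-positive $\varpi$-valuation forces $n=m$.

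Third, I would compute the vertex and edge stabilizers along $\rho$ by intersecting $\Gamma$ with the relevant conjugates of $K^\times\GL_2(\mathcal{O})$. One finds $\Gamma_{v_0}=\GL_2(\finfield_q)$ (since $\finfield_q[t]\cap\mathcal{O}=\finfield_q$) and, for $n\geq1$,
$$
\Gamma_{v_n} = \left\{\begin{pmatrix} a & b(t)\\ 0 & d \end{pmatrix} : a,d\in\finfield_q^\times,\ b\in\finfield_q[t],\ \deg b\leq n\right\}.
$$
Thus the $\Gamma_{v_n}$ for $n\geq1$ form an ascending chain with union $B(\finfield_q[t])$; the first edge stabilizer is $\Gamma_{v_0}\cap\Gamma_{v_1}=B(\finfield_q)=\GL_2(\finfield_q)\cap B(\finfield_q[t])$; and each subsequent edge stabilizer $\Gamma_{v_n}\cap\Gamma_{v_{n+1}}$ ($n\geq1$) equals $\Gamma_{v_n}$ itself.

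Finally, Bass--Serre theory identifies $\Gamma$ with the fundamental group of the quotient graph of groups $\Gamma\backslash T$. This quotient is the ray $\rho$ decorated with the groups above; because each edge with $n\geq1$ has edge group equal to its source vertex group, those edges may be successively collapsed without altering the fundamental group. The result is a single-edge graph of groups with vertex groups $\GL_2(\finfield_q)$ and $B(\finfield_q[t])$ amalgamated over $B(\finfield_q)$, yielding $\Gamma\cong\GL_2(\finfield_q)\ast_{B(\finfield_q)}B(\finfield_q[t])$ as desired. The main obstacle is the fundamental-domain step: the Smith-normal-form reduction is a classical but slightly technical computation leveraging the Euclidean structure of $\finfield_q[t]$, while the orbit-distinctness argument hinges on the fact that $\det\Gamma=\finfield_q^\times$ has trivial $\varpi$-valuation, preventing $\Gamma$ from mixing distinct levels of the standard apartment.
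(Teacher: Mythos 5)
Your approach is, in substance, the proof that the paper's citation points to: the paper does not prove this statement itself but quotes it from Serre's \emph{Trees}, and Serre's argument for Nagao's theorem is exactly the Bass--Serre analysis you describe, namely the action of $\Gamma=\GL_2(\finfield_q[t])$ on the Bruhat--Tits tree of $\GL_2(\finfield_q((t^{-1})))$ with the ray $v_0,v_1,v_2,\ldots$ as fundamental domain. Your orbit-distinctness argument is correct (if $\gamma v_n=v_m$ with $n>m$, the covolume comparison forces the entries of the first column of $\gamma$ to have strictly positive valuation, hence to vanish, contradicting invertibility), and your stabilizer computations along the ray are right; the Birkhoff/Smith-type reduction you invoke for vertex surjectivity is indeed the main computational content, as you acknowledge.

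There is, however, one genuine gap at the final step. You verify only that the ray meets every \emph{vertex} orbit exactly once and then assert that the quotient graph of groups is the ray. A vertex transversal that is a subtree need not be a fundamental domain: for $\ints$ acting on a line by translation by $2$, a single edge meets both vertex orbits, yet the quotient graph is a loop and the group is an HNN extension of the trivial vertex stabilizers, not their amalgam. So you must additionally show that every edge of $T$ is $\Gamma$-equivalent to an edge of the ray; otherwise the quotient graph of groups could carry extra edges contributing HNN letters, and the claimed decomposition would not follow. This is where the stabilizers you computed earn their keep: $\Gamma_{v_0}=\GL_2(\finfield_q)$ acts transitively on the $q+1$ neighbours of $v_0$ (the lines in $\finfield_q^2$), and for $n\geq 1$ the image of $\Gamma_{v_n}$ in $\GL(L_n/\varpi L_n)\cong\GL_2(\finfield_q)$, where $L_n=\mathcal{O}e_1+\varpi^n\mathcal{O}e_2$, is the full Borel subgroup (the coefficient of $t^n$ in the entry $b$ survives reduction), whose orbits on $\mathbb{P}^1(\finfield_q)$ are the fixed point corresponding to $v_{n+1}$ and a $q$-element orbit containing $v_{n-1}$; hence every edge incident to a ray vertex is $\Gamma$-equivalent to a ray edge, and the quotient graph is indeed the ray. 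With that supplement your argument is complete and coincides with the cited proof in Serre.
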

	
	\begin{lemma}\label{lem:psl_2_free_prod}
		Let
		\begin{align*}
			g & =-
			\begin{pmatrix}
				1-t^3 & t+t^2-t^4 \\ -t & 1-t^2
			\end{pmatrix}\\
			& = \begin{pmatrix}
				1 & t^2 \\ 0 & 1
			\end{pmatrix}
			\begin{pmatrix}
				0 & 1 \\ -1 & 0
			\end{pmatrix}
			\begin{pmatrix}
				1 & t \\ 0 & 1
			\end{pmatrix}
			\begin{pmatrix}
				0 & 1 \\ -1 & 0
			\end{pmatrix}
			\begin{pmatrix}
				1 & t \\ 0 & 1
			\end{pmatrix}
			\in\SL_2(\finfield_q[t])
		\end{align*}
		and let $\overline{g}$ be the image of $g$ in $\PSL_2(\finfield_q[t])$. Then $\gensubgrp{\overline{g}}\cong\ints$, and $\PSL_2(\finfield_q),\gensubgrp{\overline{g}}\leq\PSL_2(\finfield_q[t])$ generate their free product. 
	\end{lemma}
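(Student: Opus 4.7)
The plan is to work through the amalgamated product structure given by Theorem~\ref{thm:serre_alg_prod}. Set $A \coloneqq \GL_2(\finfield_q)$, $B \coloneqq B(\finfield_q[t])$, $C \coloneqq B(\finfield_q)$, $u_s \coloneqq \begin{pmatrix} 1 & s \\ 0 & 1 \end{pmatrix}$ for $s \in \finfield_q[t]$, and $w \coloneqq \begin{pmatrix} 0 & 1 \\ -1 & 0 \end{pmatrix}$. The displayed factorization reads $g = u_{t^2}\,w\,u_t\,w\,u_t$, where $u_{t^2}, u_t \in B \setminus C$ (their top-right entries are not in $\finfield_q$) and $w \in A \setminus C$ (not upper triangular). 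This is therefore a reduced expression for $g$ in the sense of the preamble to Theorem~\ref{thm:serre_alg_prod}.

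For $\gensubgrp{\overline{g}} \cong \ints$, I would compute a reduced expression for $g^n$ for each $n \neq 0$. For $n \geq 1$, concatenate $n$ copies of the factorization of $g$: the join point between consecutive copies is $u_t \cdot u_{t^2} = u_{t+t^2} \in B \setminus C$, so no collapse occurs and the resulting alternating product is reduced with $l \geq 1$. The analogous computation using $g^{-1} = u_{-t}\,w^{-1}\,u_{-t}\,w^{-1}\,u_{-t^2}$ handles $n < 0$. By the normal form theorem $g^n \notin C$; in particular $g^n \neq \pm I$, so $\overline{g}^n \neq 1$ in $\PSL_2(\finfield_q[t])$.

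For the free product claim, I would apply the converse observation stated before Theorem~\ref{thm:serre_alg_prod}. It suffices to show that for any reduced word $\overline{g}^{n_0}\,\overline{h}_1\,\overline{g}^{n_1} \cdots \overline{h}_k\,\overline{g}^{n_k}$ in $\PSL_2(\finfield_q) \ast \gensubgrp{\overline{g}}$, with all interior exponents non-zero and each $\overline{h}_i \neq 1$, the evaluated product is non-trivial in $\PSL_2(\finfield_q[t])$. Lift each $\overline{h}_i$ to $h_i \in \SL_2(\finfield_q) \setminus \{\pm I\}$ and set $\gamma \coloneqq g^{n_0} h_1 g^{n_1} \cdots h_k g^{n_k} \in \SL_2(\finfield_q[t])$; the goal becomes $\gamma \neq \pm I$. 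Substitute the reduced expressions for the $g^{n_i}$ computed above and analyse each interior $h_i$: if $h_i \in A \setminus C$, it contributes a valid $A$-letter directly; if $h_i \in C$, combine it with its two flanking $B$-letters (coming from the ends of the adjacent $g^{n_{i\pm 1}}$) into a product $u_s\,h_i\,u_{s'}$. A short direct computation shows this combined product lies in $B \setminus C$, except in the single edge case where $h_i = \pm u_b$ with $b \in \finfield_q^\times$ and $n_{i-1}, n_i$ have opposite signs -- in that case the sandwich collapses to $\pm u_b \in C$, and one absorbs one step further into the neighbouring $w^{\pm 1}$ letters to obtain $w^{\pm 1}(\pm u_b) w^{\mp 1}$, a non-trivial lower-triangular element of $A \setminus C$. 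In every case $\gamma$ is realised as an alternating reduced expression with $l \geq 1$, so the normal form theorem gives $\gamma \notin C$, hence $\gamma \neq \pm I$.

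The main obstacle is the bookkeeping in the edge case above: one must verify that the cascading absorption around each troublesome $h_i = \pm u_b$ terminates after exactly one extra step at a non-trivial element of $A \setminus C$, rather than cascading further and destroying the alternating structure. The fact that each factor $g^{n_i}$ with $n_i \neq 0$ contributes at least two $w^{\pm 1}$ letters ensures that enough $A$-letters remain locally for the alternating expression to survive.
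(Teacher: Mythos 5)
Your proposal is correct and follows essentially the same route as the paper: Serre's amalgam decomposition of $\GL_2(\finfield_q[t])$, the reduced form $g^n = u(t^2)\bigl(ru(t)ru(t+t^2)\bigr)^{n-1}ru(t)ru(t)$ for infinite order, and a local repair at each upper-triangular constant, where your edge case ($h_i=\pm u_b$, $b\neq 0$, adjacent exponents of opposite sign, conjugation by $w^{\pm1}$ giving a non-trivial lower-triangular $A\setminus C$ letter) is exactly the paper's Case~2 with $x(t)=-y(t)$. The termination worry you flag is resolved just as you suggest: each $g^{\pm1}$ carries two $r$-letters separated by a $u(\pm t)\in B\setminus C$, so absorptions at neighbouring constants never overlap and the alternating form survives.
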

	
	\begin{proof}
		For the first claim it suffices to check that $g^n$ is non-central in $\SL_2(\finfield_q[t])$. Let: 
		$$
		u(f)=\begin{pmatrix} 1 & f \\ 0 & 1 \end{pmatrix}\text{ and }
		r=\begin{pmatrix} 0 & 1 \\ -1 & 0 \end{pmatrix}
		$$ 
		for $f\in\finfield_q[t]$, so that $g = u(t^2)ru(t)ru(t)$. Then for $n\geq 1$, 
		\begin{equation}\label{eq:g_power_eqn}
			g^n = u(t^2)\big( ru(t)ru(t+t^2)\big)^{n-1}ru(t)ru(t)
		\end{equation}
		is a reduced element of the amalgam from Theorem~\ref{thm:serre_alg_prod}, so is non-central in $\SL_2(\finfield_q[t])$. 
		
		Now let $n(j)\in\ints$, $h_j\in\PSL_2(\finfield_q)$ ($0\leq j\leq l$) be such that $\overline{w}=\overline{g}^{n(0)} h_0\cdots\overline{g}^{n(l)} h_l\in\gensubgrp{\overline{g}}\ast\PSL_2(\finfield_q)$ is a non-trivial reduced word, 
		and let $\tilde{h}_j\in\SL_2(\finfield_q)$ be a lift of $h_j$ (so that $n(j)\neq 0$ for $j\geq 1$, and $\tilde{h}_j\neq\pm 1_2$ for $j\leq l-1$). We claim that $\overline{w}$ is also a non-trivial element of $\PSL_2(\finfield_q[t])$. We have that $\overline{w}$ lifts to: 
		$$
		w=\pm g^{n(0)}\tilde{h}_0\cdots g^{n(l)}\tilde{h}_l
		$$
		in $\SL_2(\finfield_q[t])$. By Equation~\eqref{eq:g_power_eqn}, an elementary contraction to this expression for $w$, as an element of the amalgam from Theorem~\ref{thm:serre_alg_prod}, corresponds to an index $j$ such that $\tilde{h}_j\in B(\finfield_q)\cap\SL_2(\finfield_q)$. Therefore let $a_j\in\finfield_q^\times$, $b_j\in\finfield_q$ be such that:  
		$$
		\tilde{h}_j =
		\begin{pmatrix}
			a_j & b_j \\ 0 & a_j^{-1}
		\end{pmatrix}. 
		$$
		We claim that for such $j$, and for $x(t)\in\set{t,-t^2}$, $y(t)\in\set{-t,t^2}$ we have: 
		\begin{equation} \label{eq:sl_2_red_eqn}
			\pm r u(x) h_j u(y) r 
			= k_1 u (f) k_2 \text{ or } k_1, 
		\end{equation}
		for some $k_i\in\SL_2(\finfield_q)\setminus B(\finfield_q)$ and $f\in\finfield_q[t]$ non-constant. Applying all transformations~\eqref{eq:sl_2_red_eqn} to $w$ at the indices $j$ for which $\tilde{h}_j\in B(\finfield_q)\cap\SL_2(\finfield_q)$, we obtain a non-trivial reduced form for $w$ in the amalgamated free product. Thus, as an element of $\GL_2(\finfield_q[t])$, $w\neq\pm 1_2$, and $\overline{w}\in\PSL_2(\finfield_q[t])$ is non-trivial, as desired. 
		
		We now prove the claim:
		
		\emph{Case 1:} $a_j\neq\pm 1$: 
		$$
		r u(x) h_j u(y) r = 
		\begin{pmatrix}
			0 & a_j^{-1} \\ -a_j & 0
		\end{pmatrix} 
		u(f) r
		$$
		where $f(t)=a_j^{-1}b_j+a_j^{-2}x(t)+y(t)$ is non-constant, since $a_j^2\neq 1$, and either $x(t)=-y(t)$ or $x(t)$ and $y(t)$ are of different degrees. 
		
		\emph{Case 2:} $a_j=\pm 1$, $b_j\neq 0$: 
		$$
		r u(x) h_j u(y) r=\pm r u (x+y\pm b_j) r
		$$
		which is of the required form, as either $x(t)+y(t)$ is non-constant, or $x(t)=-y(t)$, in which case we have: 
		$$
		r u(x) h_j u(y) r 
		=\begin{pmatrix}
			\mp 1 & 0 \\ b_j & \mp 1
		\end{pmatrix}
		$$
		which is also of the desired form. This verifies the two cases.
	\end{proof}
	
	\begin{proof}[Proof of Lemma \ref{psl2second}]
		Let $\overline{w}\in\PSL_2(\finfield_q)\ast\gensubgrp{x}$ be a mixed identity for $\PSL_2(\finfield_q)$. By Lemma~\ref{lem:psl_2_free_prod}, there is a monomorphism $\iota\colon\PSL_2(\finfield_q)\ast\gensubgrp{x}\to\PSL_2(\finfield_q[t])$ restricting to the identity on $\PSL_2(\finfield_q)$, with $\deg(\iota(x))\leq 4$. For $\alpha\in\finfield_q$, let $\pi_{\alpha}\colon\PSL_2(\finfield_q[t])\to\PSL_2(\finfield_q)$ be the epimorphism induced by evaluation of $t$ at $\alpha$ (equivalently, the congruence homomorphism modulo $t-\alpha$). Then $(\pi_{\alpha}\circ\iota)(\overline{w})=\overline{1}_2$ for all $\alpha\in\finfield_q$. Let $W\in\SL_2(\finfield_q[t])$ be a lift of $\iota(\overline{w})$. At least one of the polynomials: $W_{11}(t),W_{12}(t),W_{21}(t),W_{22}(t)\in\finfield_q[t]$ is non-constant, and every $\alpha\in\finfield_q$ is a solution to one of the two systems of equations: 
		$$
		\begin{pmatrix}
			W_{11}(t) & W_{12}(t) \\ 
			W_{21}(t) & W_{22}(t)
		\end{pmatrix} 
		= \begin{pmatrix}
			1 & 0 \\ 0 & 1
		\end{pmatrix}  \text{ or }
		\begin{pmatrix}
			-1 & 0 \\ 0 & -1
		\end{pmatrix}\text{.}
		$$
		Meanwhile, the $W_{ij}(t)$ have degree at most $4l$. Hence 
		$$
		\begin{pmatrix}
			W_{11}(t)^2 & W_{12}(t)^2 \\ 
			W_{21}(t)^2 & W_{22}(t)^2
		\end{pmatrix} 
		= \begin{pmatrix}
			1 & 0 \\ 0 & 1
		\end{pmatrix}.
		$$
		Thus $8l \geq q$, as the $W_{ij}(t)^2$ have degree at most $8l$. 
	\end{proof}
	
	%\subsection{Applications to infinite groups}
	
	%Another consequence of our results is that the group %$\PSL_n(\overline{\finfield}_q)$ is MIF (\emph{mixed identity free}), a classical %result that was first obtained by Tomanov~\cite{tomanov1985generalized}, see also %the more recent work \cite{gordeevkunyavskiiplotkin2016word}. 
	
	%We also get an alternative proof of the fact that the free group %$\freegrp_2=\gensubgrp{x_1,x_2}$ is MIF: Map $x_1$ resp. $x_2$ to the matrices 
	%$$
	%   \begin{pmatrix}
		%	    1 & 2 \\ 0 & 1
		%   \end{pmatrix}
	%  \text{ resp. }
	% \begin{pmatrix}
		%	    1 & 0 \\ 2 & 1
		%  \end{pmatrix}
	%$$
	%from $\SL_2(\ints)$ via a map $\varphi$ and map these matrices to the corresponding matrices in $\SL_2(p)$ for a large odd prime $p$. This gives a surjective map $\varphi_p\colon\freegrp_2\to\SL_2(p)$. If $w$ is a mixed identity for $\freegrp_2$, then $\varphi_p(w)$ is one for $\SL_2(p)$. But for $p$ large enough, all constants in $w$ remain non-trivial and we obtain a contradiction to Theorem~\ref{thm:psl}.
	
	We end this Section with a conjecture.
	
	\begin{conj} \label{SLnPolyFreeProdConj}
		There exists an absolute constant $C>0$ such that for any field $\mathbb{F}$ and every $n \geq 2$, there exists $g \in \SL_n (\mathbb{F}[t])$, the entries of which are polynomials of degree at most $C$, such that the image $\overline{g}$ of $g$ in $\PSL_n (\mathbb{F}[t])$ has infinite order and $\langle \overline{g} \rangle , \PSL_n (\mathbb{F}) \leq \PSL_n (\mathbb{F}[t])$ generate their free product. 
	\end{conj}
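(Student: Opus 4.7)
The plan is to mirror the $n = 2$ argument of Lemma~\ref{lem:psl_2_free_prod}, but to replace Serre's amalgam decomposition (Theorem~\ref{thm:serre_alg_prod}), which is special to rank one, with the action of $\SL_n(\mathbb{F}[t])$ on the Bruhat--Tits building $X$ of $\SL_n(\mathbb{F}((t^{-1})))$. Here $X$ is an $(n-1)$-dimensional CAT(0) Euclidean building, and $\SL_n(\mathbb{F}) = \SL_n(\mathbb{F}[t]) \cap \SL_n(\mathbb{F}[[t^{-1}]])$ is the stabiliser of the hyperspecial vertex $v_0$ associated to the lattice $\mathbb{F}[[t^{-1}]]^n$; so $\PSL_n(\mathbb{F})$ fixes the image of $v_0$. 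Any $\overline{g} \in \PSL_n(\mathbb{F}[t])$ whose lift acts as a hyperbolic isometry on $X$ automatically has infinite order.

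First I would exhibit an explicit $g \in \SL_n(\mathbb{F}[t])$, with entries of degree bounded by an absolute constant $C$, acting as a hyperbolic isometry on $X$. A natural candidate, generalising $u(t^2) r u(t) r u(t)$, is a product of a few transvections $1_n + t^{a_{i,j}} E_{i,j}$ together with a lift of an $n$-cycle in the Weyl group, the exponents $a_{i,j}$ being chosen so that both endpoints of the translation axis $L_g$ of $g$ on the visual boundary $\partial X$ are regular points of the spherical building at infinity. Next I would set up a ping-pong on $\partial X$: let $U_+$ and $U_-$ be small neighbourhoods at infinity of the two endpoints of $L_g$, and let $V = \partial X \setminus (U_+ \cup U_-)$. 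It then suffices to verify that every non-trivial element of $\PSL_n(\mathbb{F})$ sends $U_+ \cup U_-$ into $V$, and that every non-zero power of $\overline{g}$ sends $V$ into $U_+ \cup U_-$.

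The main obstacle is the first ping-pong condition: the two fixed points of $\overline{g}$ on $\partial X$ must not be stabilised by any non-trivial element of $\PSL_n(\mathbb{F})$; equivalently, the attracting and repelling flags of $g$, viewed as full flags over $\mathbb{F}(t)$, must fail to be $\PSL_n(\mathbb{F})$-conjugate to any flag defined over $\mathbb{F}$. Enforcing this uniformly as the rank $n$ and the field $\mathbb{F}$ vary, without allowing $C$ to grow, is precisely the difficulty underlying the conjecture: the dimension of the flag variety grows with $n$, so more genericity is required from $g$, and yet the polynomial degrees of its entries must remain absolutely bounded. An alternative, perhaps more amenable to a hands-on verification, would be to generalise the case analysis surrounding Equation~\eqref{eq:sl_2_red_eqn} directly by means of elementary row and column reductions in $\SL_n(\mathbb{F}[t])$: one would seek a normal form for $\overline{g}^{n_0} h_0 \cdots \overline{g}^{n_l} h_l$ analogous to Equation~\eqref{eq:g_power_eqn}, along with a catalogue of elementary contractions corresponding to those $h_j$ which happen to lie in proper parabolic subgroups of $\PSL_n(\mathbb{F})$. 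This combinatorial route would sidestep the building entirely, at the cost of considerably heavier book-keeping.
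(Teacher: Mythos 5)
There is nothing to compare your proposal against: the statement you were asked to prove is Conjecture~\ref{SLnPolyFreeProdConj}, which the paper deliberately leaves open. The authors only remark that, by Stepanov's work \cite{stepanov2010about}, an element $g$ with the required free-product property exists for every $\mathbb{F}$ and $n\geq 2$, but \emph{without} the absolute bound $C$ on the degrees of its entries; the uniformity in $n$ and $\mathbb{F}$ is exactly what is missing, and it is what makes the statement a conjecture rather than a lemma. Your text is honest about this: you write that enforcing the genericity of the attracting and repelling flags ``without allowing $C$ to grow \ldots is precisely the difficulty underlying the conjecture.'' That sentence is an admission that the argument is not carried out, so what you have is a strategy sketch, not a proof.

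Concretely, the gaps are these. (1) No candidate $g$ is actually produced and verified: you propose a product of transvections $1_n+t^{a_{i,j}}E_{i,j}$ and a Coxeter-type element, but you neither fix the exponents nor show that the resulting element is hyperbolic on the building with regular endpoints at infinity, uniformly in $n$ and $\mathbb{F}$, with degrees bounded by an absolute $C$. (2) Even granting such a $g$, the ping-pong is not set up rigorously: for the free product $\gensubgrp{\overline{g}}\ast\PSL_n(\mathbb{F})$ one needs $h(U_+\cup U_-)\subseteq V$ for \emph{every} non-trivial $h\in\PSL_n(\mathbb{F})$ and $\overline{g}^{\,k}(V)\subseteq U_+\cup U_-$ for \emph{every} $k\neq 0$ (including $k=\pm1$, so one cannot simply pass to a high power of $g$ without inflating the degree bound). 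Triviality of the stabiliser in $\PSL_n(\mathbb{F})$ of the two boundary points is necessary for your scheme but not sufficient: when $\mathbb{F}$ is infinite one must also rule out that translates $h(U_\pm)$ accumulate on $U_\pm$, i.e.\ one needs a uniform separation statement over the whole of $\PSL_n(\mathbb{F})$, and nothing in the sketch addresses this. The alternative ``normal form'' route generalising Equation~\eqref{eq:sl_2_red_eqn} is likewise only named, not executed; the case analysis in Lemma~\ref{lem:psl_2_free_prod} uses the rank-one amalgam of Theorem~\ref{thm:serre_alg_prod} in an essential way, and for $n\geq 3$ no analogous decomposition of $\SL_n(\mathbb{F}[t])$ is available (indeed $\SL_n(\mathbb{F}[t])$ does not decompose as such an amalgam for $n\geq 3$), so the ``heavier book-keeping'' you allude to is not a routine extension but the heart of the open problem.
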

	
	If Conjecture~\ref{SLnPolyFreeProdConj} is true, then the lower bound in Theorem \ref{thm:psl} for $\PSL_n(q)$ would follow by precisely the same argument as we have given for $\PSL_2(q)$ above. By the results of Stepanov~\cite{stepanov2010about}, there \emph{does} exist an element $g$ as above for every $\mathbb{F}$ and $n \geq 2$, but without the uniform bound on the degrees of the elements.
	
	\section{The projective symplectic groups \texorpdfstring{$\PSp_{2m}(q)$}{PSp2mq}} \label{SympSect}
	
	Surprisingly, in contrast to the projective general linear case, there are mixed identities of bounded length for the symplectic groups $\PSp_{2m}(q)$ for $m\geq 2$. (Note that for $m=1$ we have $\PSp_2(q)\cong\PSL_2(q)$ so there are no short identities by Theorem~\ref{thm:psl}.) This is a theorem due to Tomanov~\cite{tomanov1985generalized} in odd characteristic. For the sake of clarity, we reprove it here briefly and also establish the case when $q$ is even, i.e.\ $\finfield_q$ is of characteristic two.
	
	\subsection{Tomanov's result for $\PSp_{2m}(q)$ for $m\geq 2$}
	
	\begin{theorem}[Tomanov] The group $\PSp_{2m}(q)$ for $m\geq 2$ satisfies a mixed identity of length $8.$
	\end{theorem}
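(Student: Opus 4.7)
The plan is to exhibit an explicit mixed identity of length $8$ for $\PSp_{2m}(q)$ when $m \geq 2$. The structural input that distinguishes this case from $\PSp_2(q) \cong \PSL_2(q)$ is that for $m \geq 2$ the symplectic module $V = \finfield_q^{2m}$ admits totally isotropic subspaces of dimension $\geq 2$. Fixing such a plane, spanned by vectors $v, w$ with $\omega(v, w) = 0$ (where $\omega$ denotes the symplectic form), one obtains two commuting symplectic transvections $a = T_v$ and $b = T_w$ in $\Sp_{2m}(q)$. Since every element of $\Sp_{2m}(q)$ preserves $\omega$, the conjugates $a^g, b^g$ are again commuting transvections for every $g \in \Sp_{2m}(q)$; this universal relation $[a^g, b^g] = 1$ is the engine of the argument.

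First I would set up this geometric picture: fix $a, b$ as above and record the Chevalley-type identities governing $[T_{v_1, s_1}, T_{v_2, s_2}]$ as a function of the pairing $\omega(v_1, v_2)$, together with the useful conjugation rule $g^{-1} T_{v, s} g = T_{g^{-1}v, s}$. Second, I would write down a candidate word of length $8$ in one variable with constants chosen from $a$, $b$, products thereof, or (if needed) a central sign $-I_{2m} \in \Sp_{2m}(q)$, and then evaluate it at arbitrary $g$ as a short product of transvections indexed by $v, w, v.g, w.g$. Natural candidates are of the form $[[a, x], [b, x]]$, $[a, x^{-1}b x]^2$, or variants twisted by a Weyl-type element swapping the roles of $v$ and $w$. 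Third, after identifying a word $w$ that evaluates into the center $\{\pm I_{2m}\}$ of $\Sp_{2m}(q)$ for every $g$, I would descend to $\PSp_{2m}(q)$ and verify non-triviality of $w$ in the free product via Lemma~\ref{lem:shrt_id_cyc_red} and Lemma~\ref{lem:red_one_var_cs}.

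The main obstacle is the verification step. The commutativity relations $[a, b] = [a^g, b^g] = 1$ do not by themselves force a generic length-$8$ word to vanish: the cross pairings $\omega(v, w.g)$ and $\omega(v.g, w)$ do not vanish for arbitrary $g$, so the mixed commutators $[a, b^g]$ and $[b, a^g]$ are generally non-trivial transvections, and a naive guess such as $[[a,x],[b,x]]$ need not be identically trivial. The delicate point is to arrange the word so that these cross contributions cancel modulo the center, exploiting the alternating property of $\omega$ and the built-in $v \leftrightarrow w$ symmetry. The case $q$ even requires a modification of Tomanov's original argument, since in characteristic $2$ transvections are involutions and the Chevalley commutator formulas simplify differently; this accounts for the additional work promised in the authors' remark that they also establish the even case.
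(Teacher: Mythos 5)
There is a genuine gap: what you have written is a plan whose decisive step --- exhibiting a specific word of length $8$ and verifying that it evaluates trivially --- is exactly the part you leave open, and the candidates you name cannot be made to work. The paper's construction hinges on an idea absent from your sketch: the critical constant is not a transvection but a \emph{non-central involution}. Concretely, one takes $g_0\in\Sp_{2m}(q)$ (for $m\geq 2$) to be the permutation matrix swapping $e_1\leftrightarrow e_2$ and $e_{m+1}\leftrightarrow e_{m+2}$, and $k=1_{2m}+e_{1,m+1}$ a single symplectic transvection. Over a ring of characteristic $\neq 2$, any $g\in\Sp_{2m}$ with $g^2=1$ satisfies $f(v.g,v)=0$ for all $v$, so the $(m+1,1)$-entry of every conjugate $g_0^x$ vanishes; this gives $e_{1,m+1}\,g_0^x\,e_{1,m+1}=0_{2m}$, hence $g_0^xkg_0^x$ and $k$ commute for every $x$, and $w(x)=[g_0^xkg_0^x,k]$ is the desired identity of length $8$. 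The case $q$ even is then handled not by a characteristic-$2$ modification of commutator formulas, as you suggest, but by proving the matrix identity over $\ints[X]$ and pushing it forward along a surjection $\ints[X]\twoheadrightarrow\finfield_q$ (which induces a surjection on symplectic groups).

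Moreover, your specific candidates are provably hopeless for large odd $q$ by the paper's own Theorem~\ref{thm:sp} (via Corollary~\ref{cor:non_quad_elts_sol}): a mixed identity for $\PSp_{2m}(q)$, $q$ odd, none of whose critical constants lifts to an involution in $\Sp_{2m}(q)$, must have length greater than $q/2+1$. In $[[a,x],[b,x]]$ or $[a,x^{-1}bx]^2$ with $a=T_v$, $b=T_w$ attached to a totally isotropic plane, every critical constant is a transvection or a product of two commuting transvections, and for odd $q$ none of these (nor their negatives) squares to the identity, since transvections have order $p$ and $(T_wT_v^{-1})^2=T_w^2T_v^{-2}\neq 1_{2m}$; the central sign $-1_{2m}$ dies in $\PSp_{2m}(q)$ and cannot serve as a critical constant either. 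So no word of length $8$ of the shape you propose can be an identity once $q\geq 14$. Your parenthetical ``Weyl-type element swapping $v$ and $w$'' is the germ of the right idea --- the paper's $g_0$ is exactly such a swap --- but without the observation that involutions satisfy $f(v.g,v)=0$ and therefore annihilate the cross term $e_{1,m+1}g_0^xe_{1,m+1}$, neither the construction nor its verification is present.
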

	
	Let $R$ be a commutative ring of characteristic $\neq 2$ and $m\geq 2$. Consider the symplectic group $\Sp_{2m}(R)$ consisting of those matrices in $\M_{2m}(R)$ that preserve the standard non-degenerate alternating bilinear form $f\colon R^{2m}\times R^{2m}\to R$ given by
	$f(u,v)=u\Omega v^\top$,
	where $\Omega\coloneqq\left(\begin{smallmatrix}
		0_m & 1_m \\ -1_m & 0_m
	\end{smallmatrix} \right)$.
	Then, $\Sp_{2m}(R)$ can be described concretely and a matrix $\left(\begin{smallmatrix}
		a & b \\ c & d
	\end{smallmatrix} \right)\in\M_{2m}(R)$ with $a,b,c,d \in\M_m(R)$ lies in $\Sp_{2m}(R)$ if and only if 
	$$
	a b^\top-b a^\top=0_m, \quad -b c^\top+a d^\top=1_m \quad \mbox{and} \quad c d^\top-d c^\top=0_m.
	$$
	
	Now let $g$ be an arbitrary element of $\Sp_{2m}(R)$ that satisfies $g^2=1_{2m}$. Then it follows that 
	$$
	f(v.g,v) = f(v.g^2,v.g)=f(v,v.g)=-f(v.g,v).
	$$ 
	Hence $f(v.g,v)=0$ for all $v\in R^{2m}$ since $R$ is of characteristic $\neq 2$. In particular, it follows that the $(m+1,1)$-entry of $g$ must vanish as
	$g_{m+1,1}=-f(e_1.g,e_1)=f(e_1,e_1.g)=0$. Let's fix 
	$$
	g_0\coloneqq\diag\left(\left(\begin{smallmatrix}
		0 & 1 \\ 1 & 0
	\end{smallmatrix} \right),1_{m-2},\left(\begin{smallmatrix}
		0 & 1 \\ 1 & 0
	\end{smallmatrix} \right),1_{m-2}\right)\in\Sp_{2m}(R),
	$$ 
	which satisfies $g_0^2=1_{2m}$ and is a non-scalar element of $\Sp_{2m}(R)$. It is well-defined since by assumption $m\geq 2$. We conclude that for every $x\in\Sp_{2m}(R)$ the $(m+1,1)$-matrix entry of the matrix $g=g_0^x$ vanishes and hence $e_{1,m+1}g_0^x e_{1,m+1}=0_{2m}\in\M_{2m}(R)$. 
	
	Consider now the matrix
	$k\coloneqq 1_{2m}+e_{1,m+1}\in\Sp_{2m}(R)$, which is a symplectic transvection. We claim that 
	$g_0^x k g_0^x$ and $k$ commute. Indeed,
	$$
	g_0^x k g_0^x k = (1_{2m} + g_0^x e_{1,m+1} g_0^x)(1_{2m}+e_{1,m+1})=1_{2m}+g_0^x e_{1,m+1} g_0^x + e_{1,m+1}
	$$
	and similarly for $k g_0^x k g_0^x$. Hence, we conclude that for all $x\in\Sp_{2m}(R)$, we have
	$$
	w(x)= [g_0^x k g_0^x,k]=1_{2m}.
	$$
	Now if $q$ is odd, we can directly set $R\coloneqq\finfield_q$ and $w$ becomes a mixed identity of $\Sp_{2m}(q)$ of length 8, which descends to a mixed identity $\overline{w}$ of $\PSp_{2m}(q)$ since $g_0$ and $k$ are non-central. When $\finfield_q$ is of characteristic two (i.e.\ $q$ is even) assume that there is a surjective homomorphism $\overline{\bullet}\colon\Sp_{2m}(R)\twoheadrightarrow\Sp_{2m}(q)$ induced by a homomorphism $\varphi\colon R\twoheadrightarrow\finfield_q$. Then $\overline{w}$ clearly is a mixed identity of $\Sp_{2m}(q)$ which again descends to a mixed identity for $\PSp_{2m}(q)$. It remains to define the homomorphism $\varphi$ properly. For this purpose set $R\coloneqq\ints[X]$ and let $\varphi\colon\ints[X]\twoheadrightarrow\finfield_q$ be a surjective homomorphism. Then $\overline{\bullet}$ is surjective, since the symplectic transvections are elements of the symplectic groups $\Sp_{2m}(\ints[X])$ and $\Sp_{2m}(q)$ which are mapped onto each other by $\overline{\bullet}$ and they even generate $\Sp_{2m}(q)$.
	This finishes the proof.
	
	%\begin{remark}
	%    Indeed, for $q$ odd, one can take any non-central involution for $g_0\in\Sp_{2m}(q)$ and the above proof still works. This is what was claimed in Theorem~\ref{thm:symp_grps}.
	%\end{remark}
	
	\subsection{Proof of Theorem~\ref{thm:sp}}
	
	In this subsection, we will prove that any mixed identity $\overline{w}\in\PSp_{2m}\ast\freegrp_r$ for $\PSp_{2m}(q)$ of length $\leq q/2+1$ has a critical constant which lifts to an involution in $\Sp_{2m}(q)$. First we need a lemma.
	
	\begin{lemma}\label{lem:quad_elts_alt}
		Let $q$ be odd. For an element $c\in\Sp_{2m}(q)$ the following are equivalent:
		\begin{enumerate}[\normalfont(i)]
			\item $c^2=1_{2m}$, i.e.\ $c$ squares to the identity.
			\item The form $g(u,v)\coloneqq f(u.c,v)$ is alternating, where $f$ is the non-degenerate alternating form associated to $\Sp_{2m}(q)$.
		\end{enumerate}
		The implication (ii)$\Rightarrow$(i) also holds for $q$ even. In this case, it suffices that $f$ and $g$ are (skew) symmetric.
	\end{lemma}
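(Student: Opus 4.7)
The plan is to exploit the symplectic identity $f(u.c,v.c) = f(u,v)$ in combination with the definition $g(u,v) = f(u.c, v)$ and the non-degeneracy of $f$.

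For the direction (i)$\Rightarrow$(ii) with $q$ odd, I would compute $g(v,v) = f(v.c, v)$ and use both the alternating property of $f$ and the symplectic invariance of $f$ under $c$:
\begin{equation*}
f(v.c, v) = f(v.c^2, v.c) = f(v, v.c) = -f(v.c, v),
\end{equation*}
where the first equality uses $f(-,-) = f(-.c, -.c)$, the second uses $c^2 = 1_{2m}$, and the third uses that $f$ is alternating. Since $q$ is odd, this forces $f(v.c, v) = 0$ for all $v$, so $g$ is alternating.

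For (ii)$\Rightarrow$(i), valid in all characteristics, I would observe that if $g$ is alternating (or in characteristic $2$, merely (skew) symmetric) then $g(u,v) = -g(v,u)$, i.e.\ $f(u.c, v) = -f(v.c, u) = f(u, v.c)$ using that $f$ itself is alternating. On the other hand, the symplectic invariance $f(u.c, v.c) = f(u,v)$ rewritten as $f(u, v.c) = f(u.c^{-1}, v)$ yields
\begin{equation*}
f(u.c, v) = f(u.c^{-1}, v) \quad \text{for all } u,v \in \finfield_q^{2m}.
\end{equation*}
Non-degeneracy of $f$ gives $u.c = u.c^{-1}$ for every $u$, hence $c^2 = 1_{2m}$.

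The characteristic $2$ refinement is then essentially a bookkeeping remark: in that setting an alternating bilinear form satisfies $f(u,v) = f(v,u)$ (since $-1 = 1$), so alternating and skew symmetric coincide with symmetric, and the argument above goes through using only the (skew) symmetry of $g$ rather than the stronger condition $g(v,v) = 0$. I do not anticipate any serious obstacle; the only subtlety is being careful with signs in characteristic $2$ and ensuring that both implications use only the weakest hypothesis available in each case.
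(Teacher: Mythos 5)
Your argument is correct and is essentially the paper's proof: the same chain of equalities (symplectic invariance, $c^2=1_{2m}$, skew-symmetry of $f$, odd characteristic) gives (i)$\Rightarrow$(ii), and the same combination of the (skew) symmetry of $g$ and $f$ with invariance and non-degeneracy gives (ii)$\Rightarrow$(i) in all characteristics. The only nitpick is your aside that in characteristic $2$ alternating \emph{coincides} with symmetric (alternating is strictly stronger there), but this does not affect the proof, since---exactly as in the paper and as the lemma's hypothesis demands---the implication (ii)$\Rightarrow$(i) uses only the symmetry of $g$, not $g(v,v)=0$.
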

	
	\begin{proof}
		(i)$\Rightarrow$(ii): Let $v\in V\cong\finfield_q^{2m}$ be arbitrary. Then $g(v,v)=f(v.c,v)=-f(v,v.c)=-f(v.c,v.c^2)=-f(v.c,v)=-g(v,v)=0$ as $f$ is skew-symmetric, $c$ preserves $f$, and $\finfield_q$ has odd characteristic. Thus $g$ is alternating.
		
		(ii)$\Rightarrow$(i): Assume $g$ is alternating. Then $g(u,v)=f(u.c,v)=-g(v,u)=-f(v.c,u)=-f(v,u.c^{-1})=f(u.c^{-1},v),$ holds for all $u,v\in V$, since $g$ and $f$ are skew-symmetric and $c$ preserves $f$. Hence, as $f$ is non-degenerate, $u.c=u.c^{-1}$ for all $u$, so that $c=c^{-1}$ and thus $c^2=1_{2m}$. This argument also works when $q$ is even and $f$ and $g$ are (skew) symmetric.
	\end{proof}
	
	To prove the lower bound in Theorem~\ref{thm:sp}, we define $k(\lambda)$ for $\lambda\in\finfield_q$ by $x.k(\lambda)\coloneqq x+\lambda f(x,v)v=x.(1_V+\lambda h)$ for a vector $v$ which we still have to choose and consider the expression $w'(k(\lambda))$, where 
	$$
	w=c_0x^{\varepsilon(1)}c_1\cdots c_{l-1}x^{\varepsilon(l)}c_l \quad\text{and}\quad w'=x^{\varepsilon(1)}c_1\cdots c_{l-1}x^{\varepsilon(l)}
	$$ 
	are of length $\leq q/2+1$ such that $\overline{w}\in\PSp_{2m}\ast\gensubgrp{x}$ is of positive length. This $k(\lambda)$ is a symplectic transvection for all $v\in V\setminus\set{0}$. Again, if we can choose $v$ in such a way that $hc_j h\neq 0$ for all intermediate constants $c_j$ ($j=1,\ldots,l-1$) as in Lemma~\ref{lem:ex_good_rk_one_op} and if $l<q$ (which holds for all $q>2$), then we can apply the proof of Lemma~\ref{lem:poly_mthd} to get that $\overline{w}$ is not a mixed identity for $\PSp_{2m}(q)$. (For $q=2$ there is Lemma~\ref{lem:no_shrt_ids}.) We rewrite the former condition as $x.hc_jh=f(f(x,v)v.c_j,v)v=f(x,v)f(v.c_j,v)v\neq 0$. This means that $f(v.c_j,v)\neq 0$ for all $j=1,\ldots,l-1$. We claim that we can find a suitable $v\in V$ whenever all $g_j\coloneqq f(\bullet.c_j,\bullet)$ ($j=1,\ldots,l-1$) are non-alternating. To establish this claim, we need the following lemma.
	
	\begin{lemma}\label{lem:non_alt_frm}
		Let $g\colon V\times V\cong\finfield_q^{2m}\times\finfield_q^{2m}\to\finfield_q$ be a non-alternating form. Set $V(g)\coloneqq\set{v\in V\setminus\set{0}}[g(v,v)=0]$. Then $\card{V(g)}\leq 2 q^{2m-1}-1$.
	\end{lemma}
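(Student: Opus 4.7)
The plan is to recognise $V(g)\cup\{0\}$ as the zero set of a single polynomial in the coordinates of $v$, and then apply a Schwartz--Zippel-type bound. Concretely, I would fix a basis $e_1,\ldots,e_{2m}$ of $V$, write $v=\sum_i v_ie_i$, set $g_{ij}\coloneqq g(e_i,e_j)$, and observe that
$$
Q(v_1,\ldots,v_{2m})\coloneqq g(v,v)=\sum_{i,j=1}^{2m}g_{ij}v_iv_j\in\finfield_q[v_1,\ldots,v_{2m}]
$$
is a homogeneous polynomial of total degree $2$ in the $2m$ coordinates of $v$.

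The first step is to verify that $Q$ is non-zero as a polynomial (not merely as a function on $V$). If every monomial coefficient of $Q$ vanished, then $g_{ii}=0$ for all $i$ (coefficient of $v_i^2$) and $g_{ij}+g_{ji}=0$ for all $i\neq j$ (coefficient of $v_iv_j$), which together force $g(v,v)=\sum_{i<j}(g_{ij}+g_{ji})v_iv_j+\sum_i g_{ii}v_i^2=0$ for every $v\in V$; this contradicts the assumption that $g$ is non-alternating.

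The second step is to invoke the Schwartz--Zippel lemma, which asserts that a non-zero polynomial of total degree $d$ in $n$ variables over $\finfield_q$ has at most $d\,q^{n-1}$ zeros in $\finfield_q^n$. Applied with $d=2$ and $n=2m$, this yields
$$
\card{\set{v\in V}[Q(v)=0]}\leq 2q^{2m-1}.
$$
Since $Q(0)=g(0,0)=0$, the vector $v=0$ is always in this zero set, and $V(g)$ is obtained by removing it. Hence $\card{V(g)}\leq 2q^{2m-1}-1$, as claimed.

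I do not anticipate any real obstacle; the only subtlety is the polynomial-versus-function distinction in the first step, which is handled directly by reading off the coefficients. If one prefers not to cite Schwartz--Zippel, the identical bound can be obtained by choosing a coordinate $v_i$ in which $Q$ genuinely depends — quadratically if some $g_{ii}\neq 0$, linearly otherwise — and counting the at most $2$ (resp.\ $1$) solutions of the resulting one-variable equation fibre-by-fibre over the other $2m-1$ coordinates.
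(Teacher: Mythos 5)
Your proof is correct and follows essentially the same route as the paper: view $g(v,v)$ as a non-zero quadratic polynomial in the $2m$ coordinates, apply the Schwartz--Zippel bound $2q^{2m-1}$, and discard the zero vector (your coefficient-level check that the polynomial is non-zero is a cleaner version of the paper's remark that it is non-zero as a function). Only your optional fibre-by-fibre aside would need a little extra care in the purely linear case, since on the fibres where the linear coefficient of $v_i$ vanishes the equation can be satisfied identically; the resulting count is still at most $2q^{2m-1}$, but it is not literally ``at most one solution per fibre.''
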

	
	\begin{proof}
		As $g$ is not alternating, we have that the polynomial $p(v)\coloneqq g(v,v)=\sum_{i\leq j\leq 2m}{g_{ij}v_iv_j}\neq 0$ as there exists a $v$ such that $p(v)=g(v,v)\neq 0$. This expression $p(v)$ is then a non-zero polynomial in the variables $v_1,\ldots,v_{2m}$ of degree two. Hence by the Schwartz-Zippel lemma it has at most $2 q^{2m-1}$ solutions, i.e.\ $V(g)\leq 2 q^{2m-1}-1$ as the zero vector is not included in $V(g)$ but $p(0)=0$. This completes the proof.
	\end{proof}
	
	Now we can prove the following lemma.
	
	\begin{lemma}\label{lem:non_alt_frms_sol}
		Let $w=c_0x^{\varepsilon(1)}c_1\cdots c_{l-1}x^{\varepsilon(l)}c_l\in\GL_{2m}(q)\ast\gensubgrp{x}$ be of length $0<l\leq q/2+1$ such that all $g_j=f(\bullet.c_j,\bullet)$ ($j=1,\ldots,l-1$) are non-alternating. Then $\overline{w}\in\PGL_{2m}(q)\ast\gensubgrp{x}$ is non-constant on $\PSp_{2m}(q)$.
	\end{lemma}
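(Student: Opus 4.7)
The plan is to mimic the strategy used for $\PSL_n(q)$ in Lemmas~\ref{lem:poly_mthd} and~\ref{lem:psl_2_fnl_lem}, but using symplectic transvections in place of arbitrary rank-one unipotents, so that the perturbation stays inside $\Sp_{2m}(q)$. The role of ``non-fixed point of $\overline{c}_j$'' is played here by a vector $v$ for which all the forms $g_j$ do not vanish on the diagonal at $v$.

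First I would reduce, via Lemma~\ref{lem:red_one_var_cs}, to the single-variable, intermediate-constants-non-trivial reduced word, and strip $c_0,c_l$ off, passing to $w'=x^{\varepsilon(1)}c_1\cdots c_{l-1}x^{\varepsilon(l)}$; it suffices to show $\overline{w}'$ is non-constant on $\PSp_{2m}(q)$. Next I would find $v\in V\setminus\{0\}$ with $g_j(v,v)=f(v.c_j,v)\neq 0$ for every $j=1,\ldots,l-1$. By Lemma~\ref{lem:non_alt_frm}, each $g_j$ (which is non-alternating by hypothesis) has at most $2q^{2m-1}-1$ isotropic nonzero vectors, so the number of vectors excluded in total is bounded by
\[
(l-1)(2q^{2m-1}-1)\leq \tfrac{q}{2}\bigl(2q^{2m-1}-1\bigr)=q^{2m}-\tfrac{q}{2}\leq q^{2m}-1=\card{V\setminus\{0\}},
\]
using $l-1\leq q/2$ and $q\geq 2$, so some valid $v$ exists (for $q=2$ we have $l\leq 2$ and Lemma~\ref{lem:no_shrt_ids} finishes the argument instead).

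Having chosen such a $v$, define the rank-one operator $h\in\End(V)$ by $x.h=f(x,v)v$; since $f(v,v)=0$ one has $h^2=0$, and the transvection $k(\lambda)=1_V+\lambda h$ lies in $\Sp_{2m}(q)$ for every $\lambda\in\finfield_q$. The condition $hc_jh\neq0_V$ reads exactly $f(v.c_j,v)\neq 0$, which holds by our choice of $v$. Thus the hypotheses of Lemma~\ref{lem:ex_good_rk_one_op}(ii) are met, and substituting $k(\lambda)$ for $x$ in $w'$ yields, using $k(\lambda)^{-1}=k(-\lambda)$,
\[
w'(k(\lambda))=p_0+\lambda p_1+\cdots+\lambda^l p_l\in\End(V),
\]
with $p_0=c_1\cdots c_{l-1}$ invertible and $p_l=\pm h c_1 h\cdots h c_{l-1} h=\beta h$ for some $\beta\in\finfield_q^\times$, so $p_0$ and $p_l$ are linearly independent (one is invertible, the other has rank one).

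Finally, since $l\leq q/2+1<q$ for $q>2$, Lemma~\ref{lem:poly_non_const} applied inside $\End(V)=\M_{2m}(q)$ shows that the image of $\lambda\mapsto w'(k(\lambda))$ is not contained in a one-dimensional subspace, so $\overline{w}'$ is non-constant on $\PSp_{2m}(q)$, contradicting the assumption that $\overline{w}$ is a mixed identity. The main subtlety is the existence of $v$: the bound $2q^{2m-1}-1$ from the Schwartz--Zippel-based Lemma~\ref{lem:non_alt_frm} is the crucial input and is exactly what forces the length bound $l\leq q/2+1$, everything else then falling into line with the $\PSL_n(q)$ argument.
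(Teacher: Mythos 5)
Your proof is correct and takes essentially the same route as the paper: pick $v$ avoiding the vanishing sets $V(g_j)$ via the Schwartz--Zippel bound of Lemma~\ref{lem:non_alt_frm}, then run the polynomial argument of Lemma~\ref{lem:poly_mthd} with the symplectic transvection $k(\lambda)=1_V+\lambda h$, deferring $q=2$ to Lemma~\ref{lem:no_shrt_ids}. The only cosmetic point is that your final counting step needs a strict inequality for $q>2$, which indeed holds since $q^{2m}-q/2<q^{2m}-1$ when $q>2$, matching the paper's estimate $q^{2m}-1-(q/2)(2q^{2m-1}-1)=q/2-1>0$.
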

	
	\begin{proof}
		We just have to find $v$ such that $g_j(v,v)=f(v.c_j,v)\neq 0$ for all $j=1,\ldots,l-1$. But since $l\leq q/2+1$ and $\card{V(g_j)}\leq 2 q^{2m-1}-1$ by Lemma~\ref{lem:non_alt_frm} we get that 
		$$
		\card{V\setminus\set{0}\setminus\bigcup_{j=1}^{l-1}{V(g_j)}}\geq \card{V}-1-\sum_{j=1}^{l-1}{\card{V(g_j)}}\geq q^{2m}-1-(q/2)\cdot(2 q^{2m-1}-1)>0
		$$
		for $q>2$. So there is a legal choice for $v$. Also, then $0<l\leq q/2+1<q$, so the proof of Lemma~\ref{lem:poly_mthd} applies. For $q=2$ we apply Lemma~\ref{lem:no_shrt_ids}. The proof is complete.
	\end{proof}
	
	Hence by Lemma~\ref{lem:quad_elts_alt} and~\ref{lem:non_alt_frms_sol}, we immediately obtain the following corollary.
	
	\begin{corollary}\label{cor:non_quad_elts_sol}
		Let $w=c_0x^{\varepsilon(1)}c_1\cdots c_{l-1}x^{\varepsilon(l)}c_l\in\Sp_{2m}\ast\gensubgrp{x}$ be of length $0<l\leq q/2+1$ such that $c_j^2\neq 1_{2m}$ for all $j=1,\ldots,l-1$. Then $\overline{w}\in\PSp_{2m}(q)\ast\gensubgrp{x}$ is non-constant on $\PSp_{2m}(q)$.
	\end{corollary}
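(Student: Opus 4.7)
The plan is to observe that Corollary~\ref{cor:non_quad_elts_sol} is essentially a formal combination of the two preceding lemmas, plus a translation of the hypothesis. We start from the hypothesis that each intermediate constant $c_j\in\Sp_{2m}(q)$ satisfies $c_j^2\neq 1_{2m}$, and we want to set up the input to Lemma~\ref{lem:non_alt_frms_sol}, which is phrased in terms of the bilinear forms $g_j(u,v)=f(u.c_j,v)$ being non-alternating. The bridge between the two formulations is exactly Lemma~\ref{lem:quad_elts_alt}.

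More precisely, I would first invoke the contrapositive of the implication $(\mathrm{ii})\Rightarrow(\mathrm{i})$ of Lemma~\ref{lem:quad_elts_alt}, i.e.\ $c_j^2\neq 1_{2m}\Rightarrow g_j$ is non-alternating. This direction is what we need and, crucially, is valid both in odd and in even characteristic: in odd characteristic it is part of the full equivalence, while in even characteristic an alternating form is automatically skew-symmetric (and hence symmetric), which is the hypothesis under which Lemma~\ref{lem:quad_elts_alt} records the implication. So the translation $c_j^2\neq 1_{2m}\Rightarrow g_j$ non-alternating holds uniformly for every $c_j\in\Sp_{2m}(q)$, regardless of the parity of $q$.

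With this translation in hand, the hypotheses of Lemma~\ref{lem:non_alt_frms_sol} are verified for the word $w$: the length bound $0<l\le q/2+1$ is assumed, and each $g_j$ is non-alternating. Applying Lemma~\ref{lem:non_alt_frms_sol} then yields directly that $\overline{w}\in\PGL_{2m}(q)\ast\gensubgrp{x}$ is non-constant on $\PSp_{2m}(q)$, which is the desired conclusion. There is essentially no obstacle: the only point that requires a moment's care is checking that Lemma~\ref{lem:quad_elts_alt}'s (ii)$\Rightarrow$(i) (and hence its contrapositive) really applies uniformly in $q$, which as noted above follows because alternating forms are skew-symmetric in any characteristic.
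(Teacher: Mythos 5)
Your proposal is correct and follows exactly the paper's route: the paper derives the corollary immediately by combining Lemma~\ref{lem:quad_elts_alt} (the contrapositive of (ii)$\Rightarrow$(i), valid for $q$ odd and even, translating $c_j^2\neq 1_{2m}$ into $g_j$ non-alternating) with Lemma~\ref{lem:non_alt_frms_sol}. Your remark that alternating forms are (skew) symmetric in characteristic two, so the even-$q$ case of Lemma~\ref{lem:quad_elts_alt} applies, is precisely the point the paper leaves implicit.
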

	
	\begin{proof}[Proof of the lower bound in Theorem~\ref{thm:sp}]
		We have to show that, if $\overline{w}\in\PSp_{2m}(q)\ast\freegrp_r$ (which now has the free variables $x_1,\ldots,x_r$) has no \emph{critical} constants that lift to involutions in $\Sp_{2m}(q)$, then still, if it is a mixed identity for $\PSp_{2m}(q)$, it must have length $>q/2+1$.
		Indeed, non-critical constants may lift to involutions and still the mixed identity $\overline{w}$ for $\PSp_{2m}(q)$ must have length bigger than $q/2+1$. More concretely, write 
		$$
		w=c_0x_{i(1)}^{\varepsilon(1)}c_1\cdots c_{l-1} x_{i(l)}^{\varepsilon(l)}c_l
		$$ 
		and assume that $\overline{w}\in\PSp_{2m}(q)\ast\freegrp_r$ is constant on $\PSp_{2m}(q)$; $c_j\in\Sp_{2m}(q)$ ($j=0,\ldots,l$). We proceed as in the proof of Lemma~\ref{lem:red_one_var_cs}. Let $s\colon x_i\mapsto g_{-i} x g_i$ for some tuple $(g_{\pm i})_{i=1}^r\in\Sp_{2m}^{2r}(q)$ and consider the word 
		$$
		w'\coloneqq w(s(x_1),\ldots,s(x_r))=c_0' x^{\varepsilon(1)} c_1' \cdots c_{l-1}' x^{\varepsilon(l)} c_l'\in\Sp_{2m}(q)\ast\gensubgrp{x}.
		$$ 
		Then we have that $c_j'=g_{\varepsilon(j) i(j)}^{\varepsilon(j)}c_j g_{-\varepsilon(j+1) i(j+1)}^{\varepsilon(j+1)}$ ($j=1,\ldots,l-1$). By assumption, $c_j$ and hence $c_j'$ does not square to one when $j\in J_-(w)$ (since the latter is conjugate to the former). We have to make sure that $c_j'$ does not square to one for $j\in J_0(w)\cup J_+(w)$. This means 
		\begin{equation}\label{eq:ilgl_consts}
			g_{\varepsilon(j) i(j)}^{\varepsilon(j)}c_j g_{-\varepsilon(j+1) i(j+1)}^{\varepsilon(j+1)}\neq c
		\end{equation} 
		where $c^2=1_{2m}$. 
		
		At first, we assume that $q$ is odd. Then by \cite{fulmanguralnickstanton2017asymptotics}*{page~889}, there are precisely 
		$$
		f=\card{\Sp_{2m}(q)}\sum_{i=0}^m{\frac{1}{\card{\Sp_{2i}(q)}\card{\Sp_{2(m-i)}(q)}}}
		$$
		solutions $c\in\Sp_{2m}(q)$ to $c^2=1_{2m}$. So there are $f\card{\Sp_{2m}(q)}^{2r-1}$ solutions to the negation of the Inequalities~\eqref{eq:ilgl_consts}. 
		We can weakly estimate $f$ for our purposes. Indeed, $\card{\Sp_{2m}(q)}=q^{m^2}\prod_{i=1}^m{(q^{2i}-1)}\geq q^{m^2}\prod_{i=1}^m{q^{2i-1}}\geq q^{2m^2}$ for $m\geq0$, so we have
		$$
		f\leq\card{\Sp_{2m}(q)}\sum_{i=0}^m{\frac{1}{q^{2i^2}\cdot q^{2(m-i)^2}}}\leq\card{\Sp_{2m}(q)}\sum_{i=0}^m{\frac{1}{q^{(i+m-i)^2}}}
		=\card{\Sp_{2m}(q)}\cdot\frac{m+1}{q^{m^2}},
		$$
		where we use the arithmetic-geometric mean inequality $\left(\frac{a+b}{2}\right)^2\leq\frac{a^2+b^2}{2}$.
		So if $(l-1)f\card{\Sp_{2m}(q)}^{2r-1}<\card{\Sp_{2m}(q)}^{2r}$ by counting we are done, as then $w'\in\Sp_{2m}(q)$ has no intermediate constants that square to one. This is equivalent to
		$l-1<\card{\Sp_{2m}(q)}/f$. But we have that $l-1<q$ (as $l\leq q/2+1$ by assumption) and from the above that $\card{\Sp_{2m}(q)}/f\geq\frac{q^{m^2}}{m+1}$, so it suffices to show that $q\leq\frac{q^{m^2}}{m+1}$, i.e.\ $m+1\leq q^{m^2-1}$ which holds for all $q$ and $m\geq 2$ as desired. So $w'\in\Sp_{2m}(q)\ast\gensubgrp{x}$ has no intermediate constants that lift to involutions, since $w\in\Sp_{2m}(q)\ast\freegrp_r$ had no critical constants that lift to involutions (both $w$ and $w'$ are of the same length). Hence, by Corollary~\ref{cor:non_quad_elts_sol}, this finishes the proof for $q$ odd.
		
		For $q$ even, the number of involutions in $\Sp_{2m}(q)$ according to \cite{fulmanguralnickstanton2017asymptotics}*{page~891} is given by
		\begin{equation}\label{eq:num_inv_q_ev}
			f=\card{\Sp_{2m}(q)}\left(\sum_{\substack{i=0\\ \text{$i$ even}}}^m{1/A_i}+\sum_{\substack{i=2\\ \text{$i$ even}}}^m{1/B_i}+\sum_{\substack{i=1\\ \text{$i$ odd}}}^m{1/C_i}\right)
		\end{equation}
		where
		\begin{align*}
			A_i &= q^{i(i+1)/2+i(2m-2i)}\card{\Sp_i(q)}\card{\Sp_{2m-2i}(q)}\\
			B_i &= q^{i(i+1)/2+i(2m-2i)}q^{i-1}\card{\Sp_{i-2}}\card{\Sp_{2m-2i}(q)}\\
			C_i &= q^{i(i+1)/2+i(2m-2i)}\card{\Sp_{i-1}(q)}\card{\Sp_{2m-2i}(q)}.
		\end{align*}
		So since $\card{\Sp_{i-2}(q)}q^{i/2-1}\leq\card{\Sp_{i-1}(q)}\leq\card{\Sp_{i}(q)}$ we obtain
		\begin{align*}
			A_i,B_i,C_i &\geq q^{i(i+1)/2+i(2m-2i)}q^{i/2-1}\card{\Sp_{i-2}(q)}\card{\Sp_{2m-2i}(q)}\\
			&\geq q^{i(i+1)/2+i(2m-2i)+i/2-1+2(\frac{i-2}{2})^2 +2(m-i)^2}\\
			&= q^{\frac{1}{2}i^2+\frac{1}{2}i+2mi-2i^2+\frac{1}{2}i-1+\frac{1}{2}i^2-2i+2+2m^2-4mi+2i^2}.
		\end{align*}
		The exponent of $q$ is here $i^2-i+1-2mi+2m^2.$ For fixed $m$, this expression gets minimal when $i=m+1/2$. But in Equation~\eqref{eq:num_inv_q_ev} we have $i\leq m$, so plugging in $i=m$ gives the lower bound $m^2-m+1$. Again, we have to show that $l-1<\card{\Sp_{2m}(q)}/f$, but $l-1<q$ and, by Equation~\eqref{eq:num_inv_q_ev} and the bound we obtained for the exponent of $q$, it holds that $\card{\Sp_{2m}(q)}/f\geq\frac{q^{m^2-m+1}}{3(m/2+1)}$. Here the the expression $3(m/2+1)$ comes from the fact that Equation~\eqref{eq:num_inv_q_ev} has at most that many summands. Hence we have to show that $q\leq\frac{q^{m^2-m+1}}{3(m/2+1)}$ which means $q^{m^2-m}\geq 3(m/2+1)$. This holds for $q>2$ and $m\geq2$. For $q=2$ we apply Lemma~\ref{lem:no_shrt_ids}. Thus we are done for $q$ even as well.
	\end{proof}
	
	This finishes the first half of the proof and we are left to prove the upper bound $O(q)$ for $q$ even.
	\begin{proof}[Proof of the upper bound in Theorem~\ref{thm:sp}]
		The proof is essentially the same as the one for Lemma~\ref{lem:psl_up_bd}.
		Let 
		$$
		k\coloneqq 1_V+h\in\Sp_{2m}(q),
		$$ 
		where $x.h=f(x,v)v$ with $v\neq 0$, be a symplectic transvection. Here $V\cong\finfield_q^{2m}$ is the natural module of $\Sp_{2m}(q)$. Proceed as in the proof of Lemma~\ref{lem:psl_up_bd} to get a mixed identity $w\in\SL_{2m}(q)\ast\gensubgrp{x}$ for $\SL_{2m}(q)$ which descends to a mixed identity $\overline{w}$ of $\PSL_{2m}(q)$. But the only constants involved in $w$ are powers of $k$ which belong to $\Sp_{2m}(q)$, so that $w$ is also a mixed identity for $\Sp_{2m}(q)$ (with constants in $\Sp_{2m}(q)$) which descends to a mixed identity of $\PSp_{2m}(q)$. The problem with characteristic two is just that the map $k$ is then an involution, which was excluded by the assumptions. Thus the proof is complete, since $\overline{w}$ is of length $O(q)$ as in Lemma~\ref{lem:psl_up_bd}.
	\end{proof}
	
	\section{The odd-degree projective orthogonal groups \texorpdfstring{$\POmega^\circ_{2m-1}(q)$}{POmega2m-1q}} \label{OddOrthSect}
	
	Similarly to the symplectic groups, the orthogonal groups $\POmega^\circ_{2m-1}(q)$ ($m\geq3$ odd, or $q\equiv 1$ mod $4$) have a short mixed identity. This is also a result of Tomanov~\cite{tomanov1985generalized}. We reprove it here:
	
	\begin{theorem}[Tomanov] \label{thm:tomanov1}
		There exists a mixed identity for $\POmega^\circ_{2m-1}(q)$ for $m\geq3$ odd, or $q\equiv 1$ mod $4$ of length $16$.
	\end{theorem}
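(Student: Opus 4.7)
The plan is to mimic the Tomanov-style construction given in the previous subsection for $\PSp_{2m}(q)$. Work over $\Omega^\circ_{2m-1}(R)$ for a commutative ring $R$ of odd characteristic, preserving a non-degenerate symmetric bilinear form $b$ on $R^{2m-1}$ (equivalently, a non-degenerate quadratic form $Q$). The goal is to produce an involution $g_0$ and an orthogonal transvection $k$ in $\Omega^\circ_{2m-1}(q)$ together with a short word $w(x) \in \Omega^\circ_{2m-1}(q) \ast \gensubgrp{x}$ of length at most $16$ that vanishes on the whole group.

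First, exhibit a non-scalar involution $g_0 \in \Omega^\circ_{2m-1}(q)$. A natural candidate is a product $g_0 = r_{v_1} \cdots r_{v_{2k}}$ of an even number of commuting reflections, acting as $-1$ on $\gensubsp{v_1, \ldots, v_{2k}}$ and $+1$ on the orthogonal complement; its spinor norm is the class of $Q(v_1) \cdots Q(v_{2k})$ in $\finfield_q^\times/(\finfield_q^\times)^2$. The hypothesis $m \geq 3$ odd or $q \equiv 1$ mod $4$ is exactly what is required to make this product of norms a square for a suitable non-scalar choice of the $v_i$, placing $g_0$ in $\Omega^\circ$ rather than merely $\SO^\circ$.

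Second, exhibit an orthogonal transvection (long-root element) $k = 1_V + h \in \Omega^\circ_{2m-1}(q)$, where $h$ is of rank one and satisfies $h^2 = 0_V$, realized concretely by $h(v) = b(v,a)c$ for an isotropic vector $c$ with $a \perp c$. Third, analyse the pair $k$ and $k_1 \coloneqq g_0^x k g_0^x = 1_V + g_0^x h g_0^x$ for arbitrary $x \in \Omega^\circ_{2m-1}(q)$. In the symplectic proof, the identity $h g_0^x h = 0_V$, arising from $f(v \cdot g_0^x, v) = 0$ for every involution (a consequence of $g_0^2 = 1_V$ with $f$ alternating), forced $k$ and $k_1$ to commute outright. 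The symmetric bilinear form $b$ does not yield this vanishing from $g_0^2 = 1_V$ alone; instead, aligning the image of $h$ carefully with the eigenspace decomposition of $g_0$ gives only that $\gensubgrp{k,k_1}$ is $2$-step nilpotent, with $[k_1,k]$ itself a transvection lying in the root subgroup containing $k$. Hence $[[g_0^x k g_0^x, k], k] = 1_V$ is a mixed identity of length $16$ for $\Omega^\circ_{2m-1}(q) = \POmega^\circ_{2m-1}(q)$ (the equality holding since the centre of $\Omega^\circ_{2m-1}(q)$ is trivial in odd dimension and odd characteristic).

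The main obstacle is twofold: checking the existence of the involution $g_0 \in \Omega^\circ$ under the stated hypothesis via a spinor-norm computation, and verifying the claimed $2$-step nilpotency in place of the direct commutation used in the symplectic case. The latter requires a careful choice of the vectors $a, c$ defining $h$ so that $[k_1,k]$, though generically non-trivial, still lies in the abelian root subgroup through $k$; this is where the jump from length $8$ in the symplectic case to length $16$ in the orthogonal case originates.
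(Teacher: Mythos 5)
There is a genuine gap, and it occurs at the very first concrete step: in odd characteristic the orthogonal group contains no transvections, so your element $k=1_V+h$ with $h(v)=b(v,a)c$, $c$ isotropic and $a\perp c$, does not lie in $\Omega^\circ_{2m-1}(q)$. Indeed, preservation of the symmetric form $b$ by $1_V+h$ forces $b(u,a)b(c,v)+b(v,a)b(u,c)=0$ for all $u,v$; putting $u=v$ gives $2\,b(u,a)b(u,c)=0$, hence $V=a^\perp\cup c^\perp$, which by non-degeneracy forces $a=0$ or $c=0$. (The rank-one recipe works for $\Sp_{2m}$ precisely because the form is alternating.) The correct unipotent root elements here are the Eichler--Siegel transformations, e.g.\ $k(\lambda)=1_{2m-1}+\lambda(e_{1,2m-2}-e_{2,2m-1})$, whose residue $h$ has rank \emph{two}; this is what the paper's proof uses, and it is also why your later manipulations, which lean on $h$ having rank one, cannot be carried out as stated.

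The second problem is the mechanism itself. Your claim that $\gensubgrp{k,\,g_0^xkg_0^x}$ is $2$-step nilpotent with $[k_1,k]$ lying in the root subgroup through $k$, for an involution $g_0$ chosen as a fairly arbitrary product of commuting reflections, is asserted rather than proved, and it is not what makes the argument work. The paper's identity is $w(x)=[r(\lambda,x),r(\mu,x)]$ with $r(\lambda,x)=g_0^xk(\lambda)g_0^xk(-\lambda)$ (also of length $16$), and using $h^2=0$ its validity reduces to the single matrix identity $h\,g_0^x\,h\,g_0^x\,h=0_{2m-1}$. That identity is checked by explicit computation and depends crucially on the specific choice $g_0=-1_{2m-1}+2e_{m,m}$, i.e.\ $g_0$ is the negative of a reflection, so that $g_0^x+1_{2m-1}$ has rank one (only the $m$-th row of $x$ enters), and on $m\geq 3$; an involution with a larger $+1$-eigenspace, as your reflection-product construction would typically produce, does not make this vanishing hold. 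Relatedly, your spinor-norm step is aimed at the wrong target: involutions in $\Omega^\circ_{2m-1}(q)$ exist for every odd $q$ and $m\geq 2$ (the group has even order), so the hypothesis ``$m$ odd or $q\equiv 1$ mod $4$'' is not about arranging \emph{some} involution of trivial spinor norm; it is exactly the condition under which this particular $g_0=\diag(-1_{m-1},1,-1_{m-1})$ has trivial spinor norm (in the paper: a product of two conjugate involutions when $m$ is odd, a square when $q\equiv 1$ mod $4$), and when it fails one only gets the weaker conclusion for $\PSO^\circ_{2m-1}(q)$ recorded in Remark~\ref{rem:pso}. To repair your proposal you would need to replace $k$ by an Eichler transformation, fix $g_0$ to be (minus) a reflection, and then actually verify a commutation identity of the above type; at that point you have reproduced the paper's proof.
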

	\begin{proof}
		Consider $\POmega^\circ_{2m-1}(q)$, $m \geq 3$, and assume $q\equiv 1$ mod $4$ or that $m$ is odd. Let 
		$$
		\Omega=\begin{pmatrix} 0 & \cdots & 0 & 1\\
			\vdots & \iddots & \iddots & 0\\
			0 & \iddots & \iddots & \vdots\\
			1 & 0 & \cdots & 0
		\end{pmatrix}
		$$
		be the matrix of the symmetric bi-linear form $f$ which is stabilized by $\GO^\circ_{2m-1}(q)$. Define 
		$$
		g_0\coloneqq\diag(-1_{m-1},1,-1_{m-1})=-1_{2m-1}+2 e_{m,m}.
		$$
		We show that $g_0$ lies in $\Omega_{2m-1}^\circ(q)$ when $m$ is odd or $q\equiv 1$ modulo $4$ (i.e.\ $-1$ is a square in $\finfield_q$). When $m$ is odd, we have that $g_0$ is the product of the elements 
		$$x\coloneqq\diag(-1_{\frac{m-1}{2}},1_m,-1_{\frac{m-1}{2}})$$ and $$y\coloneqq\diag(1_{\frac{m-1}{2}},-1_{\frac{m-1}{2}},1,-1_{\frac{m-1}{2}},1_{\frac{m-1}{2}}).$$ However, $x$ and $y$ are conjugate and so $xy$ is of spinor norm one. If $q\equiv 1$ modulo $4$, let $\alpha$ be a square root of $-1$ and observe that $g_0=x^2$ where $x=\diag(\alpha 1_{m-1},1,-\alpha 1_{m-1})\in\SO_{2m-1}^\circ(q)$. Hence $g_0$ again has spinor norm one.
		
		Set now $k(\lambda)$ to be the Eichler transformation
		$$
		k(\lambda)\coloneqq\begin{pmatrix}
			1 & \cdots & \lambda & 0\\
			0 & \ddots & 0 & -\lambda\\
			\vdots & \ddots & \ddots & \vdots\\
			0 & \cdots & 0 & 1
		\end{pmatrix} = 1_{2m-1} + \lambda h
		$$
		with $h=e_{1,2m-2} - e_{2,2m-1}.$ This element from $\SO^\circ_{2m-1}(q)$ again is a square of an element from $\SO^\circ_{2m-1}(q)$ for $q$ odd, namely of $k(\lambda/2)$, so has spinor norm $1$.
		For $x=(x_{i,j})_{i,j=1}^{2m-1}$ we compute 
		$$
		x^{-1} = \Omega x^\top \Omega = (x_{2m-j,2m-i})_{i,j=1}^{2m-1}
		$$
		as $\Omega=\Omega^{-1}$.
		We obtain 
		$$
		g_0^x = x^{-1} g_0 x = (-\delta_{i,j} + 2 x_{m,2m-i} x_{m,j})_{i,j=1}^{2m-1},
		$$
		since
		$$
		x^{-1}e_{m,m}x =\sum_{i,k} x_{2m-k,2m-i} e_{i,k} e_{m,m} \cdot \sum_{l,j} x_{l,j} e_{l,j} = \sum_{i,j} x_{m,2m-i} x_{m,j} e_{i,j}.
		$$
		
		Then, according to \cite{tomanov1985generalized}*{pages~41 and~42}, we have the matrix identity
		$$
		r(\lambda,x)r(\mu,x)=r(\mu,x)r(\lambda,x),
		$$
		where $r(\lambda,x)=g_0^xk(\lambda)g_0^x k(-\lambda)$. Let's compute: we see that
		$$
		r(\lambda,x)= g_0^x (1+\lambda h)g_0^x (1-\lambda h) = 1 + \lambda g_0^x h g_0^x - \lambda h -\lambda^2 g_0^x h g_0^x h.
		$$
		
		Now, using $h^2=0_{2m-1}$ repeatedly, we get:
		\begin{align*}
			& r(\lambda,x)r(\mu,x)\\
			& = (1_{2m-1} + \lambda g_0^x h g_0^x -\lambda h - \lambda^2 g_0^x h g_0^x h) \cdot (1_{2m-1} + \mu g_0^x h g_0^x -\mu h - \mu^2 g_0^x h g_0^x h) \\
			& = 1_{2m-1} + \mu g_0^x h g_0^x -\mu h- \mu^2 g_0^x h g_0^x h \\
			&\quad + \lambda g_0^x h g_0^x(1_{2m-1} + \mu g_0^x h g_0^x -\mu h- \mu^2 g_0^x h g_0^x h) \\
			&\quad -\lambda h(1_{2m-1} + \mu g_0^x h g_0^x -\mu h - \mu^2 g_0^x h g_0^x h)\\
			&\quad - \lambda^2 g_0^x h g_0^x h(1_{2m-1} + \mu g_0^x h g_0^x - \mu h- \mu^2 g_0^x h g_0^x h)\\
			& = 1_{2m-1} + \mu g_0^x h g_0^x -\mu h - \mu^2 g_0^x h g_0^x h \\
			&\quad + \lambda g_0^x h g_0^x +  \lambda\mu g_0^x h g_0^x g_0^x h g_0^x -\lambda \mu g_0^x h g_0^xh - \lambda\mu^2 \lambda g_0^x h g_0^x g_0^x h g_0^x h\\
			&\quad -\lambda h - \lambda\mu h g_0^x h g_0^x + \lambda\mu h^2+ \lambda\mu^2 h g_0^x h g_0^x h\\
			&\quad - \lambda^2 g_0^x h g_0^x h - \lambda^2\mu g_0^x h g_0^x h g_0^x h g_0^x + \lambda^2\mu g_0^x h g_0^x hh +  \lambda^2\mu^2 g_0^x h g_0^x h g_0^x h g_0^x h\\
			& = 1_{2m-1} +(\lambda+\mu)(g_0^x h g_0^x-h)   - (\lambda^2+\mu^2) g_0^x h g_0^x h -\lambda\mu g_0^x h g_0^x h\\  
			&\quad- \lambda\mu h g_0^x h g_0^x + \lambda \mu^2 h g_0^x h g_0^x h - \lambda\mu g_0^x h g_0^x h g_0^x h g_0^x + \lambda^2\mu^2 g_0^x h g_0^x h g_0^x h g_0^x h
		\end{align*}
		Thus, we get $r(\lambda,x)r(\mu,x)=r(\mu,x)r(\lambda,x)$ if and only if
		$h g_0^x h g_0^x h =0_{2m-1}.$ Using our formula for $g_0^x$, we get:
		
		\begin{align*}
			& h g_0^x h \\
			&= (e_{1,2m-2} - e_{2,2m-1})g_0^x (e_{1,2m-2} - e_{2,2m-1}) \\
			&= 2 (e_{1,2m-2} - e_{2,2m-1})\\
			&\quad\cdot (x_{m,2}x_{m,1} e_{2m-2,1} + x^2_{m,2} e_{2m-2,2} + x^2_{m,1} e_{2m-1,1} + x_{m,1}x_{m,2}e_{2m-1,2})\\
			&\quad\cdot (e_{1,2m-2} - e_{2,2m-1})\\
			&= 2(x_{m,2}x_{m,1} e_{1,2m-2} -x^2_{m,2}e_{1,2m-1} + x_{m,1}x_{m,2} e_{2,2m-1} -x^2_{m,1}e_{2,2m-2})
		\end{align*}
		Here we use $m\geq3$.
		And hence:
		\begin{align*}
			& h g_0^x h g_0^x h\\
			&= 2(x_{m,2}x_{m,1} e_{1,2m-2} -x^2_{m,2}e_{1,2m-1} + x_{m,1}x_{m,2} e_{2,2m-1} -x^2_{m,1}e_{2,2m-2})\\
			&\quad\cdot g_0^x (e_{1,2m-2} - e_{2,2m-1}) \\
			&= 4(x_{m,2}x_{m,1} e_{1,2m-2} -x^2_{m,2}e_{1,2m-1} + x_{m,1}x_{m,2} e_{2,2m-1} -x^2_{m,1}e_{2,2m-2})\\
			&\quad\cdot (x_{m,2}x_{m,1} e_{2m-2,1} + x^2_{m,2} e_{2m-2,2} + x^2_{m,1} e_{2m-1,1} + x_{m,1}x_{m,2}e_{2m-1,2})\\
			&\quad\cdot (e_{1,2m-2} - e_{2,2m-1})\\
			&= 4(x_{m,2}x_{m,1} e_{1,2m-2} -x^2_{m,2}e_{1,2m-1} + x_{m,1}x_{m,2} e_{2,2m-1} -x^2_{m,1}e_{2,2m-2})\\
			&\quad\cdot (x_{m,2}x_{m,1} e_{2m-2,2m-2} - x^2_{m,2} e_{2m-2,2m-1}\\ 
			&\quad+ x^2_{m,1} e_{2m-1,2m-2}-x_{m,1}x_{m,2}e_{2m-1,2m-1})\\
			&= 0_{2m-1}
		\end{align*}
		
		This shows that there is also a mixed identity $w(x)=[r(\lambda,x),r(\mu,x)]$ of constant length in the orthogonal groups $\POmega^\circ_{2m-1}(q)$ (for $m\geq 3$) of odd degree for $m$ odd or $q\equiv 1$ mod $4$.
	\end{proof}
	
	The above proof does not work for $m=2$, i.e.\ for $\POmega^\circ_3(q)\cong\PSL_2(q)$. In this case, we have $2m-2=2$, so that the computations of the matrix products above are different. Basically, the two $2\times 2$-blocks overlap.
	
	\begin{remark} \label{rem:pso}
		The element $g_0$ defined above lies in $\PSO_{2m-1}^{\circ} (q)$, irrespective of the value of $m$ or $q$. The preceding argument therefore yields a mixed identity of bounded length for $\PSO_{2m-1} ^{\circ} (q)$, for all $m \geq 3$ and $q$ odd. It is as yet unclear whether $\POmega_{2m-1} ^{\circ} (q)$ has a mixed identity of bounded length in the case of $m$ even and $q \equiv 3$ mod $4$.
	\end{remark}
	
	\section{The projective special unitary groups \texorpdfstring{$\PSU_n(q)$}{PSUnq}}
	
	\subsection{Proof of the upper bound in Theorem~\ref{thm:unitry_groups_main_thm}} Here we proceed as in the proof of Lemma~\ref{lem:psl_up_bd}:
	
	\begin{lemma}\label{lem:psu_n_up_bd}
		There is a mixed identity of length $O(q^2)$ for $\PSU_n(q)$.
	\end{lemma}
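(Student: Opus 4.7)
The plan is to mirror the proof of Lemma~\ref{lem:psl_up_bd} in the unitary setting, invoking the law for $\SL_2$ over the enlarged field $\finfield_{q^2}$. First I would fix a unitary transvection $k = 1_V + h \in \SU_n(q)$, where $V \cong \finfield_{q^2}^n$ carries the Hermitian form $\Phi$ stabilised by $\SU_n(q)$ and $h \in \End(V)$ is a rank-one operator with $h^2 = 0_V$. Concretely, take an isotropic vector $u \in V$ (which exists for $n \geq 2$) and set $h(w) = c\Phi(w,u)u$ for some non-zero $c \in \finfield_{q^2}$ satisfying $c + \bar c = 0$; then $k \in \SU_n(q)$ fixes the hyperplane $H \coloneqq u^\perp = \ker h$ pointwise.

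Next, for any $g \in \SU_n(q)$, both $k$ and its conjugate $k^g$ fix pointwise the codimension-two subspace $U \coloneqq H \cap H.g$. Choosing a $2$-dimensional complement $W$ of $U$, both elements take the block form
$$
k, k^g = \begin{pmatrix} \ast & \ast \\ 0 & 1_U \end{pmatrix},
$$
with the top-left $2\times 2$ block lying in $\SL_2(\finfield_{q^2})$ (since entries are in $\finfield_{q^2}$ and the global determinant is $1$). I would then apply the two-variable word
$$
v(x,y) = [[[x,y^p],y^{q^2-1}],y^{q^2+1}],
$$
of length $O(q^2)$, which is a law for $\SL_2(q^2)$ by the same case analysis (diagonalisable, non-diagonalisable, or $\pm$ unipotent) as in Lemma~\ref{lem:psl_up_bd} carried out over $\finfield_{q^2}$. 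The evaluations $v(k,k^g)$ and $v(k^g,k)$ then become block-upper-unitriangular matrices of the form $\bigl(\begin{smallmatrix}1_W & \ast\\ 0 & 1_U\end{smallmatrix}\bigr)$, which lie in a common abelian subgroup, so they commute.

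Therefore $w(x) \coloneqq [v(k,k^x),v(k^x,k)] \in \SU_n(q) \ast \gensubgrp{x}$ is a mixed identity for $\SU_n(q)$ of length $O(q^2)$, which descends to a non-trivial mixed identity for $\PSU_n(q)$ of the same length (since $k$ is non-central). The only point requiring care is the construction of the unitary transvection $k$ with $h^2 = 0_V$; this reduces to the existence of an isotropic vector (which holds for $n \geq 2$ since the Hermitian form has Witt index at least one) and a non-zero trace-zero element in $\finfield_{q^2}$ (always available, since $\{c \in \finfield_{q^2} : c + \bar c = 0\}$ is a one-dimensional $\finfield_q$-subspace). After these verifications, the rest of the argument is a direct transcription of the linear case.
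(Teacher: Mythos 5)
Your proposal is correct and follows essentially the same route as the paper: the paper's proof simply says to take a unitary transvection $k\in\SU_n(q)$ and repeat the argument of Lemma~\ref{lem:psl_up_bd}, noting that one now works inside $\SL_n(q^2)$, so the $\SL_2$-law is taken over $\finfield_{q^2}$ and has length $O(q^2)$. Your explicit construction of $k=1_V+h$ from an isotropic vector and a trace-zero scalar, and the block-triangular commuting argument, are exactly the details the paper leaves implicit.
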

	
	\begin{proof}
		Choose a unitary transvection $k\in\SU_n(q)$ (see \cite{wilson2009finite}, page~67) and proceed as in the proof of Lemma~\ref{lem:psl_up_bd}. Again, $k$ fixes a hyperplane $H$ and $k^g$ for $g\in\SU_n(q)$ fixes the hyperplane $H.g$ pointwise, so that both fix the codimension-two subspace $U=H\cap H.g\leq V\cong\finfield_{q^2}^n$ pointwise. The rest is the same argument as in the proof of Lemma~\ref{lem:psl_up_bd}, noting that we are in $\SL_n(q^2)$.
	\end{proof}
	
	\subsection{Proof of the lower bound in Theorem~\ref{thm:unitry_groups_main_thm}}
	
	Again, we start by just considering $\PSU_2(q)$ to get an idea of how the proof for $\PSU_n(q)$ ($n\geq 3$) might work. In the proof of the following lemma, we use the ideas from the proof of Lemma~\ref{lem:psl_2_fnl_lem}. Actually, since $\PSL_2(q)\cong\PSU_2(q)$, the two lemmas nearly have the same content, apart from the different groups of constants.
	
	\begin{lemma}\label{lem:lw_bd_psu_2}
		Assume $w\in\GL_2(q^2)\ast\gensubgrp{x}$ is of length $0<l\leq q/2+1$ such that $\overline{w}\in\PGL_2(q^2)\ast\gensubgrp{x}$ is of positive length. Then $\overline{w}$ is non-constant on $\PSU_2(q)$.
	\end{lemma}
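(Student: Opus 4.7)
The plan is to adapt the polynomial-method argument of Lemma~\ref{lem:psl_2_fnl_lem}, substituting a unitary transvection for $x$ and reading off a polynomial in one scalar parameter. The new complication compared to the $\PSL_2(q)$ case is that the available unitary transvections with fixed ``direction'' $v$ form an $\finfield_q$-line (the trace-zero elements of $\finfield_{q^2}$) rather than a full $\finfield_{q^2}$-line, so the polynomial-vanishing step requires a mild twist.

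First, by Lemma~\ref{lem:red_one_var_cs}, I may assume the intermediate constants $c_j$ ($j=1,\ldots,l-1$) of $w$ are non-central in $\GL_2(q^2)$; it suffices to show that $\overline{w'}\in\PGL_2(q^2)\ast\gensubgrp{x}$ is non-constant on $\PSU_2(q)$, where $w'=x^{\varepsilon(1)}c_1\cdots c_{l-1}x^{\varepsilon(l)}$. Fix a non-degenerate Hermitian form $f$ on $V\cong\finfield_{q^2}^2$ preserved by $\SU_2(q)$, with conjugation induced by $\alpha\mapsto\alpha^q$. The set of isotropic points in $\proj(V)$ has cardinality $q+1$, while each non-central $\overline{c}_j$ has at most two fixed points in $\proj(V)$. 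As $2(l-1)\leq q<q+1$ by assumption, I can pick an isotropic $v\in V\setminus\set{0}$ with $\gensubsp{v}$ fixed by no $\overline{c}_j$. Since $v^\perp=\gensubsp{v}$, this is equivalent to $f(v.c_j,v)\neq 0$ for every $j$. Define $h\in\End(V)$ by $w.h=f(w,v)v$; then $h$ has rank one, $h^2=0_V$, and a direct computation using $f(v,v)=0$ shows that $k(\lambda)\coloneqq 1_V+\lambda h$ lies in $\SU_2(q)$ exactly when $\lambda+\lambda^q=0$, with $k(\lambda)^{-1}=k(-\lambda)$.

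Substitute $k(\lambda)$ for $x$ in $w'$ to obtain $w'(k(\lambda))=\sum_{j=0}^l\lambda^j P_j$ in $\M_2(q^2)$. The constant term $P_0=c_1\cdots c_{l-1}$ is invertible, and the leading term equals $P_l=\pm hc_1 h\cdots hc_{l-1}h=\pm\beta h$ with $\beta=\prod_{j=1}^{l-1}f(v.c_j,v)\in\finfield_{q^2}^\times$ by the choice of $v$; in particular, $P_l$ is a nonzero rank-one nilpotent. If $\overline{w}$ were a mixed identity for $\PSU_2(q)$, then $\overline{w'}$ would be constant on $\PSU_2(q)$, so $w'(k(\lambda))\in\finfield_{q^2}\cdot M$ for some fixed $M\in\GL_2(q^2)$ and every trace-zero $\lambda$. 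Writing $\lambda=\mu\alpha$ with $\mu\in\finfield_q$ and $\alpha\in\finfield_{q^2}$ a fixed nonzero trace-zero element, the polynomial $\sum_j\mu^j(\alpha^j P_j)$ has degree $\leq l\leq q/2+1<q$ (for $q\geq 3$) and takes values in the $\finfield_q$-subspace $U=\finfield_{q^2}\cdot M$ of $\M_2(q^2)$. Applying any $\finfield_q$-linear functional $\varphi\colon\M_2(q^2)\to\finfield_q$ vanishing on $U$ yields a polynomial of degree $<q$ in $\mu$ vanishing on all of $\finfield_q$, hence identically zero; so every coefficient $\alpha^j P_j$, and thus every $P_j$, lies in $U$. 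Then $P_0,P_l$ are both $\finfield_{q^2}$-multiples of $M$; invertibility of $P_0$ forces $M$ invertible, contradicting that $P_l$ has rank one. The case $q=2$ is covered by Lemma~\ref{lem:no_shrt_ids}.

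The main obstacle is precisely the restriction of the substitution parameter to an $\finfield_q$-line: the subspace $\finfield_{q^2}\cdot M$ of $\M_2(q^2)$ is $2$-dimensional over $\finfield_q$, so Lemma~\ref{lem:poly_non_const} does not directly exclude image containment there. The dualization to $\finfield_q$-linear functionals vanishing on this subspace bypasses this, and the slack $l<q$ permitted by the hypothesis $l\leq q/2+1$ is exactly enough to make the polynomial-vanishing step run.
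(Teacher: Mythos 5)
Your proposal is correct and follows the paper's argument almost verbatim: the same reduction via Lemma~\ref{lem:red_one_var_cs}, the same choice of an isotropic vector $v$ avoiding the at most $2(l-1)\leq q<q+1$ eigendirections of the $\overline{c}_j$, the same unitary transvection $k(\lambda)=1_V+\lambda h$ with $\lambda=\alpha\mu$ ranging over the trace-zero line, and the same expansion of $w'(k(\lambda))$ with invertible constant term and rank-one leading term. The only divergence is the final step: the paper simply invokes Lemma~\ref{lem:poly_non_const} (together with Lemma~\ref{lem:no_shrt_ids} for $q=2$), whereas you replace that citation by a dualization argument with $\finfield_q$-linear functionals vanishing on $\finfield_{q^2}\cdot M$, precisely because constancy in $\PGL_2(q^2)$ means containment in an $\finfield_{q^2}$-line, which is a two-dimensional $\finfield_q$-subspace of $\M_2(q^2)$ and so is not literally excluded by Lemma~\ref{lem:poly_non_const} as stated; the paper's lemma does extend to this setting (its proof works with coefficients in $\finfield_{q^2}$ and the variable restricted to $\finfield_q$, since the relevant nonzero polynomials of degree $<q$ cannot vanish on all of $\finfield_q$), but your explicit coefficient-extraction makes this point cleanly and is, if anything, slightly more careful than the paper at this juncture.
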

	
	\begin{proof}
		Let $f$ be the standard non-singular hermitian form on $V\cong\finfield_{q^2}^2$ with respect to the Frobenius $\finfield_{q^2}\to\finfield_{q^2}$; $\alpha\mapsto\alpha^q$. Then $x\mapsto k(\lambda,x)=x+\lambda f(x,v)v=(1_V+\lambda h)(x)$ defines an element of the general unitary group $\GU_2(q)$ when $\tr(\lambda)=0$ and $f(v,v)=0$ ($\lambda\in\finfield_{q^2}$, $v\in V$). Indeed, it is a \emph{unitary transvection}:
		\begin{align*}
			f(k(\lambda,x),k(\lambda,y))
			&=f(x+\lambda f(x,v)v,y+\lambda f(y,v)v)\\
			&=f(x,y)+\lambda f(x,v)f(v,y)+\lambda^q f(y,v)^q f(x,v)\\
			&\quad+\lambda^{q+1}f(x,v)f(y,v)^q f(v,v)\\
			&=f(x,y)+\tr(\lambda)f(y,v)^q f(x,v)+0=f(x,y).
		\end{align*}
		Here $f$ is semi-linear in the second entry. Indeed, $x\mapsto k(\lambda,x)$ is an element of $\SU_2(q)$ as it has determinant one.
		
		Proceed as in the proof of the lower bound for $\PSL_2(q)$. Choose $\alpha\in\ker(\tr)\setminus\set{0}$ and set $\lambda\coloneqq\alpha\mu$ for $\mu\in\finfield_q$ arbitrary. Note that this parametrizes the kernel of the trace map $\tr\colon\finfield_{q^2}\to\finfield_q$. Consider the word 
		$$
		w=c_0 x^{\varepsilon(1)}c_1\cdots c_{l-1} x^{\varepsilon(l)} c_l\in\GL_2(q^2)\ast\gensubgrp{x}
		$$ 
		and replace it by 
		$$w'=x^{\varepsilon(1)}c_1\cdots c_{l-1} x^{\varepsilon(l)}
		$$ 
		which becomes constant at the same time. Again, by Lemma~\ref{lem:red_one_var_cs}, we may assume that all $c_j$ are non-central ($j=1,\ldots,l-1$). We are looking for a non-trivial isotropic vector $v\in V\cong\finfield_{q^2}^2$ such that $hc_j h\neq0$ for all $j=1,\ldots,l-1$. This means, according to the above definition of $h$, $f(f(x,v)v.c_j,v)v=f(x,v)f(v.c_j,v)v\neq 0$, i.e.\ $f(v.c_j,v)\neq 0$. But since $v$ is isotropic, this holds precisely, when $v$ is not an eigenvector of $c_j$ ($j=1,\ldots,l-1$). However, the $c_j$ altogether have at most $2(l-1)$ eigenspaces of dimension one, since each of them is non-central. Moreover, there are precisely $q+1$ one-dimensional isotropic subspaces. Indeed, $x^{q+1}+y^{q+1}=0$ has exactly $q+1$ solutions, as it is equivalent to $(x/y)^{q+1}=-1$, since $x,y\neq 0$ and the norm $\N\colon\finfield_{q^2}^\times\to\finfield_q^\times; \alpha\mapsto\alpha^{q+1}$ is $q+1:1$ and surjective. But by assumption $2(l-1)<q+1$, so that there is a legal choice for $v$.
		
		Then we plug in $x\mapsto k(\alpha\mu,x)$ into $w'$ and get a polynomial of degree $l$ with $q>l>0$ in $\mu\in\finfield_q$ for $q>2$. Applying Lemma~\ref{lem:poly_non_const} for $q>2$ and Lemma~\ref{lem:no_shrt_ids} for $q=2$, we conclude that $\overline{w}'$ and hence $\overline{w}$ cannot be constant. Note here that the same proof applies to $\PSp_2(q)\cong\PSL_2(q)$ with a slight variation. But also $\PSU_2(q)\cong\PSL_2(q)$, so this is just another proof of Lemma~\ref{lem:psl_2_fnl_lem}.
	\end{proof}
	
	For the proof of the lower bound for $\PSU_n(q)$, we need the following auxiliary lemma on the number of isotropic vectors that a space $V\cong\finfield_{q^2}^n$ with non-zero hermitian form on it can admit. Its proof is standard and can be found in \cite{wilson2009finite}, page~65.
	
	\begin{lemma}\label{lem:num_isotrpc_vects}
		The number of non-zero isotropic vectors of a space $V\cong\finfield_{q^2}^n=\finfield_{q^2}^{k+l}$, with the non-zero hermitian form $f$ on it, is equal to
		$$
		N_{k,l,q}=(q^k-(-1)^k)(q^{k-1}-(-1)^{k-1})q^{2l}+q^{2l}-1,
		$$
		where $\dim(\rad(f))=l<n$ and $n=\dim(V)=k+l$. Set $N_{n,q}\coloneqq N_{n,0,q}$. The expression $N_{k,l,q}$ is equal to $q^{2n-1}+O(q^{2(n-1)})$ for $k\geq 2$. For $k=1$, it is $q^{2(n-1)}+O(1)$.
	\end{lemma}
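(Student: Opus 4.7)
The plan is to reduce the count to the non-degenerate case and then apply the standard enumeration of isotropic vectors in a non-degenerate hermitian space, as recorded e.g.\ in \cite{wilson2009finite}*{page 65}.

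First I would fix an orthogonal decomposition $V = V_0 \perp R$, where $R = \rad(f)$ has dimension $l$ and $V_0$ is a complement of dimension $k$ on which $f$ restricts to a non-degenerate hermitian form. For any $v = v_0 + r$ with $v_0 \in V_0$ and $r \in R$, one has $f(v,v) = f(v_0,v_0)$ since $r$ is in the radical, so $v$ is isotropic iff $v_0$ is. Hence the total number of isotropic vectors of $V$ (including the zero vector) equals $M_k \cdot q^{2l}$, where $M_k$ denotes the corresponding count for $V_0$, and the number of non-zero isotropic vectors is $N_{k,l,q} = M_k \cdot q^{2l} - 1$.

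Second, I would establish the formula $M_k = (q^k - (-1)^k)(q^{k-1} - (-1)^{k-1}) + 1$ by induction on $k$. Taking an orthonormal decomposition $V_0 = \langle e \rangle \perp V_0'$ and writing $v = \alpha e + w$ with $w \in V_0'$, isotropy amounts to the equation $\alpha^{q+1} = -f(w,w)$; since $\N \colon \finfield_{q^2}^\times \to \finfield_q^\times$ is surjective with fibres of size $q+1$, this has exactly $1$ solution if $w$ is isotropic and exactly $q+1$ solutions otherwise. The resulting recursion $M_k = M_{k-1} + (q+1)(q^{2(k-1)} - M_{k-1}) = -q\,M_{k-1} + q^{2k-1} + q^{2k-2}$ with $M_1 = 1$ solves to the claimed closed form. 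Substituting back gives
\[
N_{k,l,q} = (q^k - (-1)^k)(q^{k-1} - (-1)^{k-1})\,q^{2l} + q^{2l} - 1.
\]

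Finally, the asymptotic statement is immediate from this formula. For $k \geq 2$, the leading term of $(q^k - (-1)^k)(q^{k-1} - (-1)^{k-1})$ is $q^{2k-1}$ with an error of order $q^{k}$, so multiplying by $q^{2l}$ yields $N_{k,l,q} = q^{2n-1} + O(q^{2(n-1)})$. For $k = 1$ the first product degenerates, since $q^{k-1} - (-1)^{k-1} = 0$, leaving $N_{1,l,q} = q^{2l} - 1 = q^{2(n-1)} + O(1)$. There is no real obstacle here; the only minor subtlety is precisely this degeneration at $k=1$, which forces the qualitatively different asymptotic in that single case.
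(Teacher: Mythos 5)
Your argument is correct. Note that the paper itself gives no proof of this lemma at all: it simply records the formula and refers to \cite{wilson2009finite}, page~65, where the standard count of isotropic vectors for a non-degenerate hermitian form is carried out. What you have done is supply the complete, self-contained derivation that the paper delegates to the reference: the splitting $V=V_0\perp\rad(f)$, which correctly turns the count into $M_k\,q^{2l}-1$ because isotropy of $v_0+r$ depends only on $v_0$; and the recursion $M_k=-q\,M_{k-1}+q^{2k-1}+q^{2k-2}$, $M_1=1$, obtained from an orthonormal splitting $V_0=\langle e\rangle\perp V_0'$ together with the fact that the norm map $\N\colon\finfield_{q^2}^\times\to\finfield_q^\times$ is surjective with fibres of size $q+1$ (here one uses that $f(w,w)\in\finfield_q$, so $-f(w,w)$ is a legitimate value of the norm when non-zero). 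One checks directly that $M_k=(q^k-(-1)^k)(q^{k-1}-(-1)^{k-1})+1$ satisfies this recursion, so your closed form, and hence the formula for $N_{k,l,q}$, is right. The asymptotic discussion is also accurate, including the observation that for $k=1$ the product vanishes, leaving $N_{1,l,q}=q^{2(n-1)}-1$, which explains the exceptional case in the statement. In short, your route is the standard one the cited source follows, and it fills in the details the paper leaves implicit (in particular the reduction across the radical).
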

	
	The key to the proof of the lower bound for $\PSU_n(q)$ is the following general observation concerning the vanishing sets of sesquilinear forms. 
	
	\begin{lemma}
		Let $V\cong\finfield_{q^2}^n$ and $f\colon V\times V\to\finfield_{q^2}$ be the standard unitary form $f(u,v)=\sum_{i=1}^n{u_i v_i^q}$ on $V$. Moreover, let $g\colon V\times V\to\finfield_{q^2}$ be a non-degenerate sesquilinear form such that $g(u,v)=\sum_{i,j=1}^n{c_{ij}u_i v_j^q}$ so that $(c_{ij})_{i,j=1}^n\neq \lambda 1_V$ (for all $\lambda\in\finfield_{q^2}^\times$) is non-scalar. Set $V(f)\coloneqq\set{v\in V\setminus\set{0}}[f(v,v)=0]$. Then:
		$$
		\frac{\card{V(f)\cap V(g)}}{\card{V(f)}}\leq\frac{2}{q}+O(1/q^2).
		$$
	\end{lemma}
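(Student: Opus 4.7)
The plan is to count $|V(f) \cap V(g)|$ via the pencil $g_\lambda \coloneqq g - \lambda f$ for $\lambda \in \finfield_{q^2}$, using a double-counting identity obtained by partitioning $V \setminus \set{0}$ by the joint value of $f(v,v)$ and $g(v,v)$.

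First I would establish a counting identity. Since $g$ and $g_\lambda$ agree on $V(f)$, we have $V(f) \cap V(g_\lambda) = V(f) \cap V(g)$ for every $\lambda$; call its common size $A$. Partition $V \setminus \set{0}$ by the value of $v \mapsto (f(v,v), g(v,v)) \in \finfield_q \times \finfield_{q^2}$: the three classes are (i) $V(f) \cap V(g)$ over $(0,0)$; (ii) $V(f) \setminus V(g)$ over $\set{0}\times\finfield_{q^2}^\times$; and (iii) for each $\lambda \in \finfield_{q^2}$, the set $V(g_\lambda) \setminus V(f)$ over the slope $\set{(a, \lambda a)}[a \in \finfield_q^\times]$. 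Summing cardinalities,
\[q^{2n} - 1 = |V(f)| + \sum_{\lambda \in \finfield_{q^2}}\bigl(|V(g_\lambda)| - A\bigr), \quad\text{whence}\quad q^2 A = |V(f)| + \sum_{\lambda \in \finfield_{q^2}} |V(g_\lambda)| - (q^{2n} - 1).\]

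Next I would estimate each $|V(g_\lambda)|$ by distinguishing two cases. The form $g_\lambda$ is hermitian precisely when $c_{ij} = c_{ji}^q$ for all $i \neq j$ (a condition on $g$ alone) and $c_{ii} - c_{ii}^q = \lambda - \lambda^q$ for all $i$; the latter pins $\lambda$ to an $\finfield_q$-affine line in $\finfield_{q^2}$, so at most $q_h \leq q$ values of $\lambda$ yield hermitian $g_\lambda$. For such $\lambda$ the non-scalar hypothesis on $(c_{ij})$ guarantees $g_\lambda \neq 0$, so Lemma~\ref{lem:num_isotrpc_vects} bounds $|V(g_\lambda)| \leq N_{n,q} = q^{2n-1} + O(q^{2n-2})$. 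For the remaining $\lambda$, writing $g_\lambda(v,v) = P_1(v) + \alpha P_2(v)$ over an $\finfield_q$-basis $\set{1, \alpha}$ of $\finfield_{q^2}$ yields two linearly independent polynomials $P_1, P_2$ of degree $2$ in the $2n$ $\finfield_q$-coordinates of $v$, and a Schwartz--Zippel estimate gives $|V(g_\lambda)| \leq 2 q^{2n-2} + O(q^{2n-3})$.

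Finally I would combine these estimates:
\[\sum_{\lambda \in \finfield_{q^2}} |V(g_\lambda)| \leq q \cdot q^{2n-1} + (q^2 - q) \cdot 2 q^{2n-2} + O(q^{2n-1}) = 2 q^{2n} + O(q^{2n-1}),\]
which, substituted into the counting identity together with $|V(f)| = q^{2n-1} + O(q^n)$ from Lemma~\ref{lem:num_isotrpc_vects}, yields $q^2 A \leq 2q^{2n} + O(q^{2n-1})$, and dividing by $|V(f)|$ gives the desired bound.

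The main obstacle is the non-hermitian case of the estimate: one must verify that whenever $g_\lambda$ is not hermitian, the components $P_1, P_2$ are genuinely $\finfield_q$-linearly independent as polynomials, equivalently that $g_\lambda$ is not a $\finfield_{q^2}$-scalar multiple of a hermitian form. The non-scalar hypothesis on $(c_{ij})$ together with the bound $q_h \leq q$ on the hermitian pencil members is exactly what is needed to keep the coefficient of $1/q$ at $2$. In characteristic $2$, the splitting $\finfield_{q^2} = \finfield_q \oplus \alpha \finfield_q$ is available for any $\alpha \notin \finfield_q$, so the argument adapts with only minor modifications.
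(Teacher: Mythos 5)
Your outer double count over the pencil $g_\lambda\coloneqq g-\lambda f$ is a genuinely different route from the paper's proof (which instead lets the norm-one subgroup of $\finfield_{q^2}^\times$ act on the first coordinate of a common isotropic vector and reduces to an equation $a\lambda^{-1}+b\lambda-a-b=0$ with at most two solutions), and your counting identity $q^2\card{V(f)\cap V(g)}=\card{V(f)}+\sum_{\lambda}\card{V(g_\lambda)}-(q^{2n}-1)$ is correct. The gap is in the estimate for the non-hermitian members of the pencil. Linear independence of the components $P_1,P_2$ does \emph{not} yield $\card{V(g_\lambda)}\leq 2q^{2n-2}+O(q^{2n-3})$ by any Schwartz--Zippel argument: two linearly independent quadratics may share a linear factor (e.g.\ $x_1x_2$ and $x_1x_3$) and then have on the order of $q^{2n-1}$ common zeros, and even when they are coprime the naive degree/component bound gives the constant $4$, not $2$. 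This is not cosmetic, since the constant propagates linearly: with $4$ in place of $2$ your method only gives $4/q+O(1/q^2)$. The estimate you need is true, but it uses the hermitian structure again rather than mere independence: when $C-\lambda 1_V$ is not an $\finfield_{q^2}$-multiple of a hermitian matrix, write $g_\lambda(v,v)=Q_H(v)+\gamma Q_{H'}(v)$ with $H,H'$ hermitian, $\gamma$ of trace zero, and $H,H'$ linearly independent over $\finfield_q$, and bound the common isotropic vectors of $H,H'$ by a second double count over the $\finfield_q$-pencil $H+tH'$, $t\in\finfield_q\cup\set{\infty}$, using that by Lemma~\ref{lem:num_isotrpc_vects} every non-zero hermitian form has at most $q^{2n-1}+q^{2n-2}$ isotropic vectors; this gives $q\card{V(g_\lambda)}\leq(q+1)(q^{2n-1}+q^{2n-2})-(q^{2n}-1)$, hence $\card{V(g_\lambda)}\leq 2q^{2n-2}+O(q^{2n-3})$, and it is exactly this step that produces the constant $2$.

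Two further points. First, the relevant dichotomy is ``proportional to hermitian'' versus ``not'', not ``hermitian'' versus ``not'': if $C-\lambda 1_V=\beta H$ with $\beta\in\finfield_{q^2}^\times$ and $H$ hermitian, then $\card{V(g_\lambda)}$ is of order $q^{2n-1}$, so these $\lambda$ must be grouped with the hermitian ones. You need that there are at most $O(q)$ such $\lambda$; this holds, but requires an argument you do not give (if $(C-\lambda_i 1_V)^{\ast}=\mu_i(C-\lambda_i 1_V)$ for two distinct norm-one factors $\mu_1\neq\mu_2$, then $C$ is scalar, contradicting your hypothesis; so a single $\mu$ occurs, and for fixed $\mu$ the admissible $\lambda$ form a coset of the $q$-element kernel of $\lambda\mapsto\lambda^q-\mu\lambda$), and it is not implied by your bound $q_h\leq q$ on the genuinely hermitian members. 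Second, minor repairs: your displayed sum should read $3q^{2n}+O(q^{2n-1})$ rather than $2q^{2n}+O(q^{2n-1})$ (it is only after subtracting $q^{2n}-1$ that one gets $q^2\card{V(f)\cap V(g)}\leq 2q^{2n}+O(q^{2n-1})$, which is what you then use), and for degenerate hermitian members you should bound by $\max_{k\geq1}N_{k,n-k,q}$ rather than $N_{n,q}$, which has the same asymptotics. With these repairs the strategy does work, but as written the central estimate is unproved.
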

	
	In other words, $V(f)$ and $V(g)$ have few points in common.
	
	\begin{proof}
		Assume w.l.o.g.\ that $c_{21}\neq 0$. Indeed, if there is no $c_{ij}\neq 0$ for $i\neq j$ (in which case we could permute the coordinates so that $(i,j)=(2,1)$ and hence $c_{21}\neq 0$), then $(c_{ij})_{i,j=1}^n$ is a diagonal matrix with not all diagonal entries equal to each other. Again, by permuting the coordinates, we may assume that $c_{11}=\lambda\neq\mu=c_{22}$. Choose two non-zero elements $a,b\in\finfield_{q^2}$ such that $a^{q+1}+b^{q+1}=1$. This is possible, since the norm $\N\colon\finfield_{q^2}^\times\to\finfield_q^\times;\alpha\mapsto\alpha^{q+1}$ is surjective. Then $u=\left(\begin{smallmatrix} a & b\\ -b^q & a^q\end{smallmatrix}\right)$ is an element of $\SU_2(q)$: 
		$$
		uu^\ast=\begin{pmatrix} a & b\\ -b^q & a^q\end{pmatrix}\begin{pmatrix} a^q & -b\\ b^q & a\end{pmatrix}=\begin{pmatrix} a^{q+1}+b^{q+1} & 0\\ 0 & a^{q+1}+b^{q+1}\end{pmatrix}=1_2.
		$$
		Now we compute
		\begin{align*}
			u
			\begin{pmatrix}
				\lambda & 0\\
				0 & \mu
			\end{pmatrix}
			u^\ast
			&=
			\begin{pmatrix} 
				a & b\\ 
				-b^q & a^q
			\end{pmatrix}
			\begin{pmatrix}
				\lambda & 0\\
				0 & \mu
			\end{pmatrix}
			\begin{pmatrix} 
				a^q & -b\\ 
				b^q & a
			\end{pmatrix}\\
			&=
			\begin{pmatrix} 
				\lambda a & \mu b\\ 
				-\lambda b^q & \mu a^q
			\end{pmatrix}
			\begin{pmatrix} 
				a^q & -b\\ 
				b^q & a
			\end{pmatrix}\\
			&=
			\begin{pmatrix}
				\lambda a^{q+1}+\mu b^{q+1} & ab(\mu-\lambda)\\
				a^q b^q(\mu-\lambda) & \mu a^{q+1}+\lambda b^{q+1}
			\end{pmatrix}.
		\end{align*}
		Since $\lambda\neq\mu$, the two off-diagonal matrix entries are non-zero and we can conjugate $(c_{ij})_{i,j=1}^n$ by $u\oplus 1_{n-2}$ to get $c_{21}\neq 0$, while we preserve the form $f$.
		
		Let $v\in V$ be isotropic with respect to $f$ and $v_1\neq 0$. There are exactly $N_{n,q}-N_{n-1,q}$ such vectors. Assume $v$ is isotropic with respect to $g$ as well. Then $v.\lambda=(\lambda v_1,v_2,\ldots,v_n)$ for $\lambda\in\finfield_{q^2}$, $\lambda^{q+1}=1$, is isotropic for $f$, too. This defines an action of the cyclic group $C=\ker(\N\colon\finfield_{q^2}^\times\to\finfield_q^\times)=\set{\alpha\in\finfield_{q^2}}[\alpha^{q+1}=1]$ on the points of $V(f)$. In order that $v.\lambda$ is isotropic for $g$ as well, we must have:
		\begin{align*}    
			0&=g(v.\lambda,v.\lambda)-g(v,v) \\
			&=\lambda^{q+1}c_{11}v_1^{q+1}-c_{11}v_1^{q+1}+(\lambda-1)\sum_{i=2}^n{ c_{1i} v_1v_i^q}+(\lambda^q-1)\sum_{i=2}^n{c_{i1} v_iv_1^q} \\
			&=0+\lambda^q\sum_{i=2}^n{c_{i1} v_iv_1^q}+\lambda\sum_{i=2}^n{ c_{1i} v_1v_i^q}-\sum_{i=2}^n{(c_{i1} v_iv_1^q+c_{1i} v_1v_i^q)} \\
			&=a\lambda^q+b\lambda-a-b \\
			&=a\lambda^{-1}+b\lambda-a-b.
		\end{align*}
		This equation has at most two solutions in $\lambda$ when $a$ and $b$ are not both zero (indeed, these are $1$, and $a/b$ when $a,b\neq 0$). In the opposite case, $a=0$, so $v$ lies in the kernel $U=\ker(\varphi)$ of the non-zero (since $c_{21}\neq0$) linear functional $\varphi\colon v\mapsto\sum_{i=2}^n{c_{i1} v_i}$. The space $U$ cannot be totally isotropic with respect to $f$, since $\dim(U)=n-1$ and $n\geq3$. Set $k\coloneqq n-1-\dim(\rad(\rest{f}_U))\geq 1$. According to Lemma~\ref{lem:num_isotrpc_vects}, there are 
		$$
		N_{k,n-1-k,q}=
		\begin{cases}
			q^{2(n-2)}+O(1) & \text{for } k=1\\
			q^{2(n-1)-1}+O(q^{2(n-2)}) & \text{for } k\geq2
		\end{cases}
		$$ 
		such non-zero vectors $v$. Hence we can estimate the cardinality of $V(f)\cap V(g)$ as follows:
		$$
		\card{V(f)\cap V(g)}\leq N_{n-1,q}+N_{k,n-1-k,q}+\frac{2}{q+1}(N_{n,q}-N_{n-1,q}-N_{k,n-1-k,q}).
		$$
		If $k=1$, applying Lemma~\ref{lem:num_isotrpc_vects}, we obtain
		\begin{align*}
			\card{V(f)\cap V(g)}
			&\leq q^{2(n-1)-1}+O(q^{2(n-2)})+q^{2(n-2)}+O(1)\\
			&\quad+\frac{2}{q+1}(q^{2n-1}+O(q^{2(n-1)})-q^{2(n-1)-1}\\
			&\quad+O(q^{2(n-2)})-q^{2(n-2)}+O(1))\\
			&=2q^{2(n-1)}+O(q^{2(n-1)-1}).
		\end{align*}
		Similarly, for $k\geq 2$, we get
		\begin{align*}
			\card{V(f)\cap V(g)}
			&\leq 2q^{2(n-1)-1}+O(q^{2(n-2)})\\
			&\quad +\frac{2}{q+1}(q^{2n-1}+O(q^{2(n-1)})-2q^{2(n-1)-1}+O(q^{2(n-2)}))\\
			&=2q^{2(n-1)}+O(q^{2(n-1)-1})
		\end{align*}
		as well.
		Thus,
		\begin{align*}
			\frac{\card{V(f)\cap V(g)}}{\card{V(f)}}
			&\leq\frac{2q^{2(n-1)}+O(q^{2(n-1)-1})}{q^{2n-1}+O(q^{2(n-1)})}=\frac{2}{q}+O(1/q^2).
		\end{align*}
		The proof is complete.
	\end{proof}
	
	\begin{lemma}\label{lem:lw_bd_psu_n}
		Assume $w\in\GL_n(q^2)\ast\gensubgrp{x}$ ($n\geq 3$) is of length $0<l\leq q/2+O(1)$ such that $\overline{w}\in\PGL_n(q^2)\ast\gensubgrp{x}$ is of positive length. Then $\overline{w}$ is non-constant on $\PSU_n(q)$.
	\end{lemma}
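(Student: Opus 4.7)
My plan is to mimic the scheme already used for $\PSU_2(q)$ in Lemma~\ref{lem:lw_bd_psu_2} and for $\PSL_n(q)$ in Lemma~\ref{lem:psl_n_fnl_lem}, with the role of the ``counting fixed points of $\overline{c}_j$'' step replaced by the sesquilinear-vanishing estimate just established. Concretely: by Lemma~\ref{lem:red_one_var_cs} I may assume that all intermediate constants $c_1,\ldots,c_{l-1}$ of $w$ are non-central, and it suffices to show that the word $w'=x^{\varepsilon(1)}c_1\cdots c_{l-1}x^{\varepsilon(l)}$ induces a non-constant map on $\PSU_n(q)$. The plug-in element will again be a unitary transvection $k(\alpha\mu,x)=x+\alpha\mu f(x,v)v$, where $f$ is the standard hermitian form on $V\cong\finfield_{q^2}^n$, $\alpha\in\ker(\tr)\setminus\set{0}$ is fixed, $\mu$ ranges over $\finfield_q$, and $v\in V$ is a non-zero isotropic vector to be chosen.

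Writing $h\colon x\mapsto f(x,v)v$, so that $h^2=0_V$, the key condition is that $hc_jh\neq 0_V$ for each $j=1,\ldots,l-1$, which, exactly as in the proof of Lemma~\ref{lem:lw_bd_psu_2}, amounts to $f(v.c_j,v)\neq 0$. Introducing the sesquilinear forms $g_j(u,w)\coloneqq f(u.c_j,w)$, this says $v\in V(f)\setminus V(g_j)$. Since $c_j\in\GL_n(q^2)$ is invertible and non-central (hence its matrix is non-scalar), each $g_j$ is non-degenerate and non-scalar, so the preceding lemma yields
$$
\card{V(f)\cap V(g_j)}\leq\left(\frac{2}{q}+O(1/q^2)\right)\card{V(f)}.
$$
A union bound over $j=1,\ldots,l-1$ then gives
$$
\card{V(f)\cap\bigcup_{j=1}^{l-1}V(g_j)}\leq(l-1)\left(\frac{2}{q}+O(1/q^2)\right)\card{V(f)},
$$
which is strictly less than $\card{V(f)}$ as soon as $l\leq q/2+O(1)$. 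Hence there exists an admissible isotropic $v$, which is exactly what the hypothesis on $l$ allows.

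Having fixed such a $v$, I substitute $x\mapsto k(\alpha\mu,x)=1_V+\alpha\mu h$ into $w'$. Since $h^2=0_V$ one has $k(\alpha\mu,x)^{-1}=1_V-\alpha\mu h$, so the resulting expression is a polynomial in $\mu\in\finfield_q$ of degree exactly $l$, with constant term $c_1\cdots c_{l-1}\in\GL_n(q^2)$ (invertible) and leading coefficient $\pm hc_1h\cdots hc_{l-1}h$, which is a non-zero scalar multiple of the rank-one operator $h$. These two endpoints are linearly independent in $\End(V)$, and $l<q$ by hypothesis, so Lemma~\ref{lem:poly_non_const} applies to conclude that the image of $\mu\mapsto w'(k(\alpha\mu,x))$ is not contained in a one-dimensional subspace; therefore $\overline{w}'$, and hence $\overline{w}$, is non-constant on $\PSU_n(q)$.

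The only genuinely new ingredient compared with the $\PSL_n(q)$ argument is the estimate on $\card{V(f)\cap V(g_j)}$, and this is already in hand. The main point requiring care is verifying that non-centrality of $c_j\in\GL_n(q^2)$ does translate into the matrix of $g_j$ being non-scalar (so that the hypothesis of the sesquilinear-vanishing lemma is satisfied). Once this is confirmed the rest is a routine adaptation of Lemma~\ref{lem:lw_bd_psu_2} to higher rank, in complete parallel with the passage from Lemma~\ref{lem:psl_2_fnl_lem} to Lemma~\ref{lem:psl_n_fnl_lem}.
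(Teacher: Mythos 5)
Your proposal is correct and follows essentially the same route as the paper: reduce to non-central intermediate constants via Lemma~\ref{lem:red_one_var_cs}, use the sesquilinear-vanishing estimate with a union bound over $j=1,\ldots,l-1$ to find an isotropic $v$ with $f(v.c_j,v)\neq 0$, then plug in the unitary transvections $1_V+\alpha\mu h$ and invoke Lemma~\ref{lem:poly_non_const} exactly as in Lemma~\ref{lem:lw_bd_psu_2}. The point you flag about non-scalarity is immediate, since with $f$ the standard hermitian form the Gram matrix of $g_j(u,w)=f(u.c_j,w)$ is just the matrix of $c_j$, which is non-scalar and invertible.
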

	
	\begin{proof}
		We proceed as in the proof of Lemma~\ref{lem:lw_bd_psu_2}. We have to make sure that there is a vector $v\in V$ such that $f(v,v)=0$ and $g_j(v,v)\coloneqq f(v.c_j,v)\neq 0$ for $j=1,\ldots,l-1$. But by the previous lemma for $g=g_j$ we have
		$$
		\frac{\card{V(f)\cap V(g)}}{\card{V(f)}}\leq\frac{2}{q}+O(1/q^2),  
		$$
		and $1/(2/q+O(1/q^2))=q/2+O(1)$, so that $V(f)\setminus\bigcup_{j=1}^{l-1}{V(g_j)}\neq\emptyset$. The proof is complete.
	\end{proof}
	
	\section{Outlook and further comments} \label{comments}
	
	We note that the mixed identities $w\in G\ast\freegrp_r$ considered herein for $G=S_n$ and $A_n$, and most other groups covered in this article, are \emph{singular}, i.e.\ they lie in the kernel of the augmentation map $\varepsilon\colon G\ast\freegrp_r\to\freegrp_r$ which fixes $\freegrp_r$ element-wise and maps $G\ni g\mapsto 1_{\freegrp_r}$, following the terminology introduced in \cite{klyachkothom2017new}, their \emph{content} is trivial. By Theorem~1 in \cite{schneiderthom2022word} the former is necessary for $S_n$, as, by this theorem, there are no non-singular identities of bounded length. We will address this question in forthcoming work for quasi-simple groups of Lie type, \cite{bradford2023nonsingular}.
	
	Let us come back to the case $\POmega^\circ_{2m-1}(q)$, which we cover for even $m$ only when $q \equiv 1$ mod $4$. The case $q \equiv 3$ mod $4$ is rather peculiar. It seems plausible and likely that there is no mixed identity of bounded length in this case, even though the almost simple group $\PSO_{2m-1}^\circ(q)$ including also the elements of non-trivial spinor norm does satisfy a mixed identity of bounded length, see Remark \ref{rem:pso}. This shows even more drastically then for $\PSL_n(q)$ that passage to an almost simple group might change the asymptotics of the length of shortest mixed identities.
	
	In a forthcoming work, we plan to address the remaining families of simple groups of Lie type of bounded rank.
	
	\begin{bibdiv}
		\begin{biblist}
			\bib{bradford2021lawless}{article}{
				title={Quantifying lawlessness in finitely generated
					groups},
				author={Bradford, Henry},
				journal={arXiv:2112.08875},
			}
			
			\bib{bradford2023nonsingular}{article}{
				title={Non-singular word maps for groups of Lie type},
				author={Bradford, Henry},
				author={Schneider, Jakob},
				author={Thom, Andreas},
				status={in preparation},
			}		

			\bib{bradford2023length}{article}{
				title={On the length of non-solutions to equations with
					constants in some linear groups},
				author={Bradford, Henry},
				author={Schneider, Jakob},
				author={Thom, Andreas},
				status={in preparation},
			}
			
			\bib{bradford2019short}{article}{
				title={Short laws for finite groups and residual finiteness growth},
				author={Bradford, Henry},
				author={Thom, Andreas},
				journal={Transactions of the American Mathematical Society},
				volume={371},
				number={9},
				pages={6447--6462},
				year={2019}
			}
						
			\bib{bradford2019lie}{article}{
				title={Short laws for finite groups of Lie type},
				author={Bradford, Henry},
				author={Thom, Andreas},
				status={submitted},
				journal={arXiv:1811.05401}
			}
			
			\bib{elkasapythom2014goto}{article}{
				title={About Got{\^o}'s method showing surjectivity of word maps},
				author={Elkasapy, Abdelrhman},
				author={Thom, Andreas},
				journal={Indiana University Mathematics Journal},
				volume={63},
				date={2014},
				number={5},
				pages={1553--1565}
			}
			
			\bib{fulmanguralnickstanton2017asymptotics}{article}{
				title={Asymptotics of the number of involutions in finite classical groups},
				author={Fulman, Jason},
				author={Guralnick, Robert},
				author={Stanton, Dennis},
				journal={Journal of Group Theory},
				volume={20},
				number={5},
				pages={871--902},
				year={2017},
				publisher={De Gruyter}
			}
			
			\bib{golubchikmikhalev1982generalized}{article}{
				title={Generalized group identities in classical groups},
				author={Golubchik, Igor},
				author={Mikhalev, Aleksandr},
				journal={Zapiski Nauchnykh Seminarov POMI},
				volume={114},
				pages={96--119},
				year={1982},
				publisher={St. Petersburg Department of Steklov Institute of Mathematics, Russian~…}
			}
			
			\bib{gordeev1997freedom}{article}{
				title={Freedom in conjugacy classes of simple algebraic groups and identities with constants},
				author={Gordeev, Nikolai},
				journal={Algebra i Analiz},
				volume={9},
				number={4},
				pages={63--78},
				year={1997}
			}
			
			\bib{gordeevkunyavskiiplotkin2016word}{article}{
				title={Word maps and word maps with constants of simple algebraic groups},
				author={Gordeev, Nikolai},
				author={Kunyavski{\u\i}, Boris},
				author={Plotkin, Eugene},
				journal={Doklady Mathematics},
				volume={94},
				number={3},
				pages={632--634},
				year={2016},
				organization={Springer Science \& Business Media}
			}
			
			\bib{gordeevkunyavskiiplotkin2018word}{article}{
				title={Word maps on perfect algebraic groups},
				author={Gordeev, Nikolai},
				author={Kunyavski{\u\i}, Boris},
				author={Plotkin, Eugene},
				journal={International Journal of Algebra and Computation},
				volume={28},
				number={8},
				pages={1487--1515},
				year={2018},
				publisher={World Scientific}
			}
			
			\bib{guralnickliebeckobrienshalevtiep2018surjective}{article}{
				title={Surjective word maps and {B}urnside's $p^a q^b$ theorem},
				author={Guralnick, Robert},
				author={Liebeck, Martin}, 
				author={O'Brien, Eamonn},
				author={Shalev, Aner},
				author={Tiep, Pham Huu},
				journal={Inventiones mathematicae},
				volume={213},
				number={2},
				pages={589--695},
				year={2018},
				publisher={Springer Science \& Business Media}
			}
			
			\bib{jones1974varieties}{article}{
				title={Varieties and simple groups},
				author={Jones, Gareth A.},
				journal={Journal of the Australian Mathematical Society},
				volume={17},
				number={2},
				pages={163--173},
				year={1974},
				publisher={Cambridge University Press}
			}
			
			\bib{klyachkothom2017new}{article}{
				title={New topological methods to solve equations over groups},
				author={Klyachko, Anton},
				author={Thom, Andreas},
				journal={Algebraic \& Geometric Topology},
				volume={17},
				number={1},
				pages={331--353},
				year={2017},
				publisher={Mathematical Sciences Publishers}
			}
			
			\bib{larsen2004word}{article}{
				title={Word maps have large image},
				author={Larsen, Michael},
				journal={Israel Journal of Mathematics},
				volume={139},
				number={1},
				pages={149--156},
				year={2004},
				publisher={Springer Science \& Business Media}
			}
			
			\bib{larsenshalev2009word}{article}{
				title={Word maps and Waring type problems},
				author={Larsen, Michael},
				author={Shalev, Aner},
				journal={Journal of the American Mathematical Society},
				volume={22},
				number={2},
				pages={437--466},
				year={2009}
			}
			
			\bib{larsenshalevtiep2012waring}{article}{
				title={Waring problem for finite quasisimple groups},
				author={Larsen, Michael},
				author={Shalev, Aner},
				author={Tiep, Pham Huu},
				journal={International Mathematics Research Notices},
				volume={2013},
				number={10},
				pages={2323--2348},
				year={2012},
				publisher={Oxford University Press}
			}
			
			\bib{lubotzky2014images}{article}{
				title={Images of word maps in finite simple groups},
				author={Lubotzky, Alexander},
				journal={Glasgow Mathematical Journal},
				volume={56},
				number={2},
				pages={465--469},
				year={2014},
				publisher={Cambridge University Press}
			}
			
			\bib{nikolovschneiderthom2018some}{article}{
				author={Nikolov, Nikolay},
				author={Schneider, Jakob},
				author={Thom, Andreas},
				title={Some remarks on finitarily approximable groups},
				language={English, with English and French summaries},
				journal={Journal de l’{\'E}cole polytechnique—Math{\'e}matiques},
				volume={5},
				date={2018},
				pages={239--258},
			}
			
			\bib{schneider2019phd}{thesis}{
				author={Schneider, Jakob},
				title={On ultraproducts of compact quasisimple groups},
				type={PhD thesis},
				school={TU Dresden},
				year={2019},
				status={appeared on \url{http://www.qucosa.de}}
			}
			
			\bib{schneiderthom2021word}{article}{
				author={Schneider, Jakob},
				author={Thom, Andreas},
				title={Word images in symmetric and classical groups of Lie type are dense},
				journal={Pacific Journal of Mathematics},
				volume={311},
				date={2021},
				number={2},
				pages={475--504},
			}
			
			\bib{schneiderthom2022word}{article}{
				title={Word maps with constants on symmetric groups},
				author={Schneider, Jakob},
				author={Thom, Andreas},
				journal={arXiv:2206.11956},
				status={to appear in Mathematische Nachrichten},
			}
			
			\bib{serre1980trees}{book}{
				author={Serre, Jean-Pierre},
				title={Trees},
				note={Translated from the French by John Stillwell},
				publisher={Springer-Verlag, Berlin-New York},
				date={1980},
				pages={ix+142},
			}
			
			\bib{stepanov2010about}{article}{
				title={About the length of laws for finite groups},
				author={Stepanov, Alexei},
				journal={Journal of Algebra},
				volume={324},
				number={7},
				pages={1549--1557},
				year={2010},
				publisher={Elsevier}
			}
			
			\bib{thom2017length}{article}{
				title={About the length of laws for finite groups},
				author={Thom, Andreas},
				journal={Israel Journal of Mathematics},
				volume={219},
				number={1},
				pages={469--478},
				year={2017},
				publisher={Springer}
			}
			
			\bib{tomanov1985generalized}{article}{
				title={Generalized group identities in linear groups},
				author={Tomanov, George M.},
				journal={Mathematics of the USSR-Sbornik},
				volume={51},
				number={1},
				pages={33},
				year={1985},
				publisher={IOP Publishing}
			}
			
			\bib{wilson2009finite}{book}{
				title={The finite simple groups},
				author={Wilson, Robert},
				volume={147},
				year={2009},
				publisher={Springer}
			}
		\end{biblist}
	\end{bibdiv}
	
\end{document}